\newtheorem{theorem}{Theorem}
\newtheorem{prop}[theorem]{Proposition}
\newtheorem{lemma}[theorem]{Lemma}
\newtheorem*{conj*}{Conjecture}
\newtheorem{cor}[theorem]{Corollary}
\newtheorem{obs}[theorem]{Observation}
\newtheorem{remark}{Remark}
\title{Rainbow numbers for $x_1+x_2=kx_3$ in $\mathbb{Z}_n$}
\author{Erin Bevilacqua\thanks{Department of Mathematics, Pennsylvania State University, State College, PA 16801, USA, eib5092@psu.edu. NSA Grant H98230-18-1-0043.}, Samuel King\thanks{Department of Mathematics, University of Rochester, Rochester, NY 14627, sking19@u.rochester.edu. NSA Grant H98230-18-1-0043.}, J\"{u}rgen Kritschgau\thanks{Department of Mathematics, Iowa State University, Ames, IA 50011, USA, jkritsch@iastate.edu.},\\ Michael Tait\thanks{Departemnt of Mathematics, Carnegie Mellon University, Pittsburgh, PA 15289, mtait@cmu.edu. Research supported by NSF Grant DMS-1606350.}, Suzannah Tebon\thanks{Department of Mathematics, Beloit College, Beloit, WI 53589, tebonsr@beloit.edu. NSA Grant H98230-18-1-0043}, Michael Young\thanks{Department of Mathematics, Iowa State University, Ames, IA 50011, USA, myoung@iastate.edu. Research supported by NSF Grant DMS-1719841 and NSA Grant H98230-18-1-0043.}}
\begin{document}
\maketitle

\begin{abstract}
In this work, we investigate the fewest number of colors needed to guarantee a rainbow solution to the equation $x_1 + x_2 = k x_3$ in $\mathbb{Z}_n$.
This value is called the Rainbow number and is denoted by $rb(\mathbb{Z}_n, k)$ for positive integer values of $n$ and $k$.
We find that $rb(\mathbb{Z}_p, 1) = 4$ for all primes greater than $3$ and that $rb(\mathbb{Z}_n, 1)$ can be deterimined from the prime factorization of $n$.
Furthermore, when $k$ is prime, $rb(\mathbb{Z}_n, k)$ can be determined from the prime factorization of $n$.
\end{abstract}

\section*{Introduction}
Let $\mathbb{Z}_n$ be the cyclic group of order $n$, and let an \textit{r-coloring} of $\mathbb{Z}_n$ be a function $c : \mathbb{Z}_n \to [r]$ where $[r] := \{1,...,r\}$.
In this paper, we assume that each $r$-coloring is \textit{exact} (surjective).
Given an exact $r$-coloring, we define $r$ \textit{color classes} $C_i = \{x\in\mathbb Z_n\mid c(x)=i\}$ for $1 \leq i \leq r$.
Occasionally, when convenient, we will use $R$, $G$, $B$, and $Y$ to denote the colors or the color classes red, green, blue, and yellow, respectively.

Fix an integer $k$. Let a \emph{triple} $(x_1, x_2, x_3)$ be any three elements in $\mathbb{Z}_n$ which are a solution to $x_1 + x_2 \equiv kx_3 \mod n$.
When $k=1$, we will call these triples \textit{Schur triples}.
Such a triple is called a \textit{rainbow triple} under a coloring $c$ when  $c(x_1) \not = c(x_2)$, $c(x_1) \not = c(x_3)$, and $c(x_2) \not = c(x_3)$.
Consequently, a coloring will be called \textit{rainbow-free} when there does not exist a rainbow triple in $\mathbb{Z}_n$ under $c$.

The \emph{rainbow number} of $\mathbb{Z}_n$ given $x_1+x_2=kx_3$, denoted $rb(\mathbb Z_n,k)$, is the smallest positive integer $r$ such that any $r$-coloring of $\mathbb{Z}_n$  admits a rainbow triple. By convention, if such an integer does not exist, we set $rb(\mathbb Z_n,k)=n+1$.
A \textit{maximum} coloring is a rainbow-free $r$-coloring of $\mathbb{Z}_n$ where $r = rb(\mathbb{Z}_n,k)-1$.

For a coloring $c$ of $\mathbb{Z}_{st}$, the $i^{th}$ \textit{residue class} modulo $t$ is the set of all the elements in $\mathbb{Z}_{st}$ which are congruent to $i \mod t$. 
Denote each residue class as $R_i = \{j \in \mathbb{Z}_{st} | j \equiv i \mod t \}$.
We say the $i^{th}$ \textit{residue palette} modulo $t$ is the set of colors which appear in the $i^{th}$ \textit{residue class}, and we will denote each palette as $P_i = \{c(j)|j \equiv i \mod t \}$.

Rainbow numbers for the equation $x_1+x_2=2x_3$, for which the solutions are $3$-term arithmetic progressions, have been studied in \cite{BSY}, \cite{BEHHKKLMSWY}, \cite{JLMNR}, and \cite{Y}. These problems are historically rooted in Roth's Theorem, Szemer\'edi's Theorem, and van der Waerden's Theorem.
The first half of our paper explores the rainbow numbers of $\mathbb{Z}_n$ given the Schur equation, $x_1+x_2=x_3$.
We rely on the work of Llano and Montenjano in \cite{LM}, Jungi\'c et al. in \cite{JLMNR}, and Butler et al. in \cite{BEHHKKLMSWY} to prove exact values for $rb(\mathbb{Z}_n,1)$ in terms of the prime factorization of $n$. Our results are an extension to the results in \cite{BSY}, \cite{JLMNR}, and \cite{Y}.

\begin{theorem}\label{SchurPrimes4}
For a prime $p \geq 5$, $rb(\mathbb{Z}_p, 1) = 4.$
\end{theorem}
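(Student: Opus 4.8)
My plan is to prove the two inequalities $rb(\mathbb{Z}_p,1)\ge 4$ and $rb(\mathbb{Z}_p,1)\le 4$ separately. For the lower bound I will exhibit an explicit rainbow-free exact $3$-coloring. Put $C_1=\{0\}$, and partition the nonzero elements into two nonempty classes $C_2,C_3$, each chosen to be a union of pairs $\{x,-x\}$ (this is possible precisely because $p\ge 5$ gives $(p-1)/2\ge 2$ such pairs). With only three colors present, any rainbow triple must use the color of $0$, hence must contain $0$ as one of $x_1,x_2,x_3$. If $x_1=0$ then $x_2=x_3$ and if $x_2=0$ then $x_1=x_3$, so neither yields a rainbow triple; the only remaining case is $x_3=0$, forcing $x_2=-x_1$, and then $x_1,x_2$ lie in a common pair $\{x_1,-x_1\}$ and so share a color. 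Thus $c$ is rainbow-free, giving a maximum coloring with $3$ colors and $rb(\mathbb{Z}_p,1)\ge 4$.

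For the upper bound I will argue by contradiction: assume there is a rainbow-free exact coloring $c$ of $\mathbb{Z}_p$ using at least four colors, and fix four distinct nonempty color classes. The engine is a sumset constraint. For distinct classes $X\neq Y$ and any $a\in X$, $b\in Y$, the Schur triple $(a,b,a+b)$ has $c(a)\neq c(b)$, so rainbow-freeness forces $c(a+b)\in\{c(a),c(b)\}$; hence $X+Y\subseteq X\cup Y$ for every pair of distinct classes. I will also record the companion constraint coming from $0$: writing $A=c(0)$, the triple $(-x,x,0)$ shows that for every $x$ with $c(x)\neq A$ we have $c(-x)\in\{A,c(x)\}$, i.e.\ $-X\setminus X\subseteq A$ for each class $X\neq A$. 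Together these say that the partition into four classes is rigidly multiplicative with respect to cross-sums, and that the class of $0$ absorbs all "reflected" elements of the other classes.

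The finish is additive-combinatorial. Since the four classes are nonempty, any two distinct classes $X,Y$ satisfy $|X|+|Y|\le p-2$, so the Cauchy--Davenport inequality gives $|X+Y|\ge |X|+|Y|-1$, while $X+Y\subseteq X\cup Y$ gives $|X+Y|\le |X|+|Y|$. Thus every cross-sum is critical or near-critical. In the critical case $|X+Y|=|X|+|Y|-1$ (with $|X|,|Y|\ge 2$), Vosper's theorem forces $X$ and $Y$ to be arithmetic progressions with a common difference $q$; applying the automorphism $t\mapsto q^{-1}t$ (which preserves solutions of $x_1+x_2=x_3$) turns them into cyclic intervals, and one checks that two disjoint intervals $X,Y$ with $X+Y\subseteq X\cup Y$ must be adjacent and sit around $0$. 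Iterating this across the pairs $\{B,C\},\{B,D\},\{C,D\}$ of the three non-zero classes, together with the absorption condition $-X\setminus X\subseteq A$, pins the whole coloring to an interval structure concentrated at $0$ and shows that a fourth nonempty class cannot coexist, contradicting the assumption of four colors. This yields $rb(\mathbb{Z}_p,1)\le 4$, and with the lower bound completes the proof.

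The main obstacle, which I expect to require the most care, is the additive-combinatorial classification: Vosper's theorem only handles the critical case $|X+Y|=|X|+|Y|-1$, and the non-critical case $X+Y=X\cup Y$ (where the classes need not be arithmetic progressions) must be ruled out separately, as must the degenerate cases where some class has size $1$. For these steps I intend to lean on the structural description of rainbow-free colorings of $\mathbb{Z}_p$ for $x_1+x_2=x_3$ developed by Llano and Montejano~\cite{LM} and implicit in Jung\'i\'c et al.~\cite{JLMNR}, which already organizes rainbow-free colorings around the class of $0$; quoting that classification lets me bypass a lengthy case analysis and conclude directly that at most three color classes are possible.
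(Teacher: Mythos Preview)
Your lower bound is correct and matches the paper's construction exactly. The upper bound, however, has a genuine gap.

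The Cauchy--Davenport/Vosper route is an attractive idea and the critical case does work out: if $X,Y\in\{B,C,D\}$ have $|X|,|Y|\ge 2$ and $|X+Y|=|X|+|Y|-1$, then after dilation they become adjacent intervals whose union must contain $0$, contradicting $0\in A$. But you yourself flag the two places where the argument is incomplete, and neither is actually resolved. First, the non-critical case $X+Y=X\sqcup Y$ is genuinely possible for disjoint sets avoiding $0$ (e.g.\ $X=\{1,4\},Y=\{2,3\}$ in $\mathbb{Z}_5$), so it cannot be dismissed, and nothing you wrote rules it out when a third class $D$ is present. Second, your phrase ``iterating this across the pairs\ldots\ pins the whole coloring to an interval structure'' is not an argument: different pairs could be critical with different common differences, or some critical and some not, and you give no mechanism for reconciling them. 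The fallback to \cite{LM} does not close the gap either: the Llano--Montejano theorem classifies rainbow-free \emph{$3$-colorings} (for $k=1$ it says the smallest class is exactly $\{0\}$), and it says nothing directly about $4$-colorings. To use it you would still need to supply a reduction---merge a $4$-coloring to a $3$-coloring in several ways and extract a contradiction from the fact that each merge forces a class equal to $\{0\}$---and that reduction is precisely the content you have omitted.

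For comparison, the paper avoids additive combinatorics entirely. It first proves two structural facts by elementary means: (i) any rainbow-free $3$-coloring of $\mathbb{Z}_p$ with all classes of size at least $2$ has a rainbow Schur triple (their Proposition~\ref{MinPrime}, proved via a ``dominant color'' argument), and (ii) any rainbow-free $r$-coloring with $r\ge 3$ satisfies $c(x)=c(-x)$ (Lemma~\ref{symmetry}). It then takes a putative rainbow-free $r$-coloring with $r\ge 4$, merges its classes into three groups in all possible ways, and uses (i) to force three of the four original classes to be singletons; (ii) then says each singleton must be $\{0\}$, a contradiction. That merge-and-pigeonhole step is short but essential, and it is exactly what your appeal to \cite{LM} would need to be made into a proof.
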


\begin{remark}
It can be deduced through inspection that $rb(\mathbb{Z}_2,1) = rb(\mathbb{Z}_3,1) = 3$.
\end{remark}

Theorem \ref{SchurPrimes4} gives exact values for $rb(\mathbb Z_p,1)$ where $p$ is prime. Therefore, Theorems \ref{SchurFactorization} and \ref{SchurPrimes4} give exact values for $rb(\mathbb{Z}_n,1)$. The proof for Theorem \ref{SchurFactorization} is at the end of Section \ref{SectionSchurUpperBound}.

\begin{theorem}\label{SchurFactorization}
For a positive integer $n$ with prime factorization $n = p_1^{\alpha_1} \cdot p_2^{\alpha_2} \cdots p_m^{\alpha_m}$, $$\text{rb}(\mathbb Z_n,1) = 2+\sum_{i=1}^{m} \Big( \alpha_i(\text{rb}(\mathbb{Z}_{p_i},1)-2) \Big).$$
\end{theorem}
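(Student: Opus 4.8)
The plan is to prove the multiplicative-type recursion
\[ rb(\mathbb{Z}_{st},1) = rb(\mathbb{Z}_s,1) + rb(\mathbb{Z}_t,1) - 2 \]
for every factorization $n = st$, and then to iterate it. Writing $M(m) := rb(\mathbb{Z}_m,1)-1$ for the number of colors in a maximum coloring, this recursion is equivalent to $M(st) = M(s) + M(t) - 1$, so the quantity $rb(\mathbb{Z}_m,1)-2 = M(m)-1$ is completely additive in $m$. Granting the recursion, I would induct on the number of prime factors $\Omega(n) = \sum_i \alpha_i$: peeling off a single prime $n = p\cdot(n/p)$ gives $rb(\mathbb{Z}_n,1)-2 = (rb(\mathbb{Z}_p,1)-2) + (rb(\mathbb{Z}_{n/p},1)-2)$, and the inductive hypothesis applied to $n/p$ produces the stated sum, with Theorem \ref{SchurPrimes4} and the Remark supplying the base cases $\Omega(n)=1$. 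Thus the whole theorem rests on the two inequalities comprising the recursion.

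For the lower bound $rb(\mathbb{Z}_{st},1) \geq M(s)+M(t)$ I would give an explicit recursive construction of a rainbow-free coloring with $M(s)+M(t)-1$ colors. Use reduction modulo $t$, i.e.\ the projection $\pi:\mathbb{Z}_{st}\to\mathbb{Z}_t$; its kernel is the residue class $R_0 = t\mathbb{Z}_{st}$, a subgroup isomorphic to $\mathbb{Z}_s$. Fix a maximum, symmetric (meaning $c(x)=c(-x)$) rainbow-free coloring $c_t$ of $\mathbb{Z}_t$ in which $\{0\}$ is its own color class, and a similar coloring $c_s$ of $\mathbb{Z}_s$; color each $x\notin R_0$ by the class color $c_t(\pi(x))$, and color $R_0\cong\mathbb{Z}_s$ by $c_s$, reusing the base color $c_t(0)$ for $c_s(0)$ and introducing $M(s)-1$ genuinely new colors. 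This uses exactly $M(s)+M(t)-1$ colors. Rainbow-freeness follows by a short case analysis on how many of $x_1,x_2,x_3$ lie in $R_0$: three in $R_0$ reduces to rainbow-freeness of $c_s$; none in $R_0$ reduces to that of $c_t$; exactly two is impossible because $R_0$ is a subgroup; and the case of exactly one in $R_0$ is exactly where I would use symmetry of $c_t$, since the projected relation then forces two of the outside coordinates to be negatives of one another (or equal), hence equally colored. The constructed coloring is again symmetric with $\{0\}$ a singleton class, so the invariant needed to iterate the construction is preserved.

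The hard direction, and the main obstacle, is the matching upper bound: every rainbow-free coloring of $\mathbb{Z}_{st}$ uses at most $M(s)+M(t)-1$ colors. Here I would analyze the residue classes $R_0,\dots,R_{t-1}$ modulo $t$ and their palettes $P_0,\dots,P_{t-1}$. The difficulty is that $\pi$ does not descend to a coloring of $\mathbb{Z}_t$ when a class carries several colors, so one cannot simply quotient. The strategy is to separate the colors into those that are forced to behave like a rainbow-free coloring of the quotient $\mathbb{Z}_t$ (bounding their number by $M(t)$) and the extra colors confined to behave like a rainbow-free coloring of a single fiber $\cong\mathbb{Z}_s$ (bounding those by $M(s)$), with one color shared; controlling exactly how colors may be spread across distinct palettes without creating a rainbow Schur triple is the technical crux, and is where I expect to invoke the structural description of rainbow-free colorings of cyclic groups from \cite{LM} and \cite{JLMNR}. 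Combining this upper bound with the construction yields the recursion, and the induction above completes the proof.
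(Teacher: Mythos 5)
Your overall architecture (prove $rb(\mathbb{Z}_{st},1)=rb(\mathbb{Z}_s,1)+rb(\mathbb{Z}_t,1)-2$, then induct on $\Omega(n)$) is exactly the paper's, and your lower-bound construction is correct and complete. It differs in detail from the paper's: the paper peels off one prime $p$ at a time, inherits a coloring of $\mathbb{Z}_t$ on the multiples of $p$, and colors the remaining elements with just two new colors according to whether $x\equiv\pm1\pmod p$ (using the fact that a prime cannot divide exactly two entries of a Schur triple), whereas you glue a symmetric coloring of the quotient $\mathbb{Z}_t$ onto the complement of the fiber $R_0$ and an arbitrary symmetric coloring of $\mathbb{Z}_s$ onto $R_0$. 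Your version works for an arbitrary factorization $n=st$ and your verification of the invariant (symmetry, $\{0\}$ a singleton class) that makes the construction iterable is sound; this is a perfectly good, arguably cleaner, substitute for the paper's Lemmas 9--11.

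The genuine gap is the upper bound, which you correctly identify as the crux but do not prove. What is missing is the following elementary pair of facts, which the paper supplies and which require no outside structure theory. First, for a rainbow-free $c$ on $\mathbb{Z}_{st}$ with residue palettes $P_0,\dots,P_{t-1}$ modulo $t$, one has $|P_i\setminus P_0|\le 1$ for every $i$: if $mt+i$ and $nt+i$ carried two distinct colors absent from $P_0$, then $(mt-nt,\; nt+i,\; mt+i)$ would be a rainbow Schur triple, since its first entry lies in $R_0$. Second, the induced coloring $\hat c$ of $\mathbb{Z}_t$ (assigning to $i$ the unique color of $P_i\setminus P_0$, or a fresh symbol $\alpha$ if that set is empty) is rainbow-free, because any rainbow triple for $\hat c$ lifts, by choosing suitable representatives in the relevant residue classes, to a rainbow triple for $c$. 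Together these give $r=|P_0|+|\hat c|-1$ with $|P_0|\le rb(\mathbb{Z}_s,1)-1$ (as $R_0\cong\mathbb{Z}_s$ carries a rainbow-free coloring) and $|\hat c|\le rb(\mathbb{Z}_t,1)-1$. Your stated plan to instead invoke the structural classifications of Llano--Montejano and Jungi\'c et al.\ would not succeed as written: those results describe rainbow-free $3$-colorings of $\mathbb{Z}_p$ for $p$ prime, and the upper bound here must handle arbitrarily many colors on a composite modulus $\mathbb{Z}_{st}$. Until the palette bound and the lifting argument are actually established, the recursion, and hence the theorem, is not proved.
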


We continue by considering the equation $x_1+x_2=px_3$ for any prime $p$. Many of the techniques for the $k=1$ case generalize. However, there are complications. If we let the prime factorization of $n$ be $n=p^\alpha \cdot q_1^{\alpha_1} \cdots q_m^{\alpha_m}$, then we can produce a recursive formula for $rb(\mathbb{Z}_n,p)$ detailed in Theorem \ref{GenKFormula}.

\begin{theorem}\label{thmpneqq}
Let $p,q$ be distinct and prime. Then $rb(\mathbb Z_q,p)=4$ if and only if $p,q$ do not satisfy either of the following conditions:

\begin{enumerate}
\item $p$ generates $\mathbb Z_q^*$,
\item $|p|=(q-1)/2$ in $\mathbb Z_q^*$ and $(q-1)/2$ is odd.
\end{enumerate}

Otherwise, $rb(\mathbb Z_q,p)=3$.
\end{theorem}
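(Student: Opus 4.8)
The plan is to compute $M := rb(\mathbb{Z}_q,p)-1$, the maximum number of colors in a rainbow-free exact coloring of $\mathbb{Z}_q$; since a $2$-coloring can never contain a rainbow triple we always have $M\ge 2$, and I will argue $M\in\{2,3\}$ with $M=2$ occurring exactly under conditions (1)--(2). The first step is to recast the hypotheses multiplicatively. Writing $d=|p|$ in $\mathbb{Z}_q^*$, the subgroup $\langle p,-1\rangle$ equals $\langle p\rangle$ when $d$ is even (so that $-1\in\langle p\rangle$) and has order $2d$ when $d$ is odd. A short check then shows that condition (1) or (2) holds if and only if $\langle p,-1\rangle=\mathbb{Z}_q^*$: condition (1) is the case $\langle p\rangle=\mathbb{Z}_q^*$ (with $d=q-1$ even), and condition (2) is the case $d$ odd with $2d=q-1$, i.e. again $\langle p,-1\rangle=\mathbb{Z}_q^*$. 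Thus, for odd $q$ (the cases $q=2,3$ being immediate), the theorem is equivalent to the statement that a rainbow-free exact $3$-coloring exists if and only if $\langle p,-1\rangle$ is a proper subgroup of $\mathbb{Z}_q^*$.

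For the ``if'' direction I would exhibit a rainbow-free exact $3$-coloring when $H:=\langle p,-1\rangle\subsetneq\mathbb{Z}_q^*$. Color $0$ with its own color $R$, and color each nonzero $x$ by one of two colors $G,B$ according to a fixed nontrivial assignment that is constant on cosets of $H$; since $[\mathbb{Z}_q^*:H]\ge 2$ both $G$ and $B$ occur, so the coloring is exact. To verify rainbow-freeness, note that any solution with all three coordinates nonzero uses only the two colors $G,B$ and so cannot be rainbow; a solution with $x_3=0$ has the form $(x,-x,0)$ with $c(x)=c(-x)$ because $-1\in H$; and a solution with $x_1=0$ (or $x_2=0$) has the form $(0,px_3,x_3)$ with $c(px_3)=c(x_3)$ because $p\in H$. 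Hence no solution is rainbow, giving $M\ge3$ and $rb=4$.

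For the ``only if'' direction, suppose $\langle p,-1\rangle=\mathbb{Z}_q^*$ and that a rainbow-free coloring exists; let $C_0$ be the class containing $0$ and set $Z=C_0\setminus\{0\}$. The two families of solutions through $0$ force strong structure: from $(0,px_3,x_3)$ we get that for every $x$ either $c(px)=c(x)$ or one of $c(px),c(x)$ equals $c(0)$, and from $(x,-x,0)$ that either $c(-x)=c(x)$ or one equals $c(0)$. Consequently each non-$C_0$ class $A$ satisfies $p^{\pm1}A,\,-A\subseteq A\cup Z$, i.e. $A$ is invariant under the generators $p,-1$ up to falling into $Z$. Since $\langle p,-1\rangle$ acts transitively on $\mathbb{Z}_q^*$, if $Z=\emptyset$ then all of $\mathbb{Z}_q^*$ lies in a single non-$C_0$ class and at most two colors appear; so any coloring with at least three colors must have $Z\ne\emptyset$, and the non-$C_0$ classes are unions of the connected pieces obtained by deleting $Z$ from the (connected) Cayley graph of $\mathbb{Z}_q^*$ with connection set $\{p^{\pm1},-1\}$.

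It remains to rule out such configurations for three (and more) colors, and this is where I expect the main difficulty to lie. Writing the three classes as $C_0\supseteq\{0\}\cup Z$, $G$, and $B$, rainbow-freeness of the solutions with all coordinates nonzero is exactly the system of sumset conditions $(G+B)\cap pZ=\emptyset$, $(G+Z)\cap pB=\emptyset$, and $(B+Z)\cap pG=\emptyset$ with $G,B,Z$ all nonempty. The plan is to derive a contradiction from this system together with the multiplicative near-invariance above: the invariance makes $G$ and $B$ ``thick'' under dilation by $\langle p,-1\rangle=\mathbb{Z}_q^*$, while a Cauchy--Davenport estimate forces $G+B$ to be too large to avoid $pZ$ unless $G$ or $B$ is empty. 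This counting/rigidity step is the crux; it can be carried out directly via Cauchy--Davenport (with a Vosper-type analysis in the extremal case, which clashes with the dilation-invariance of $G,B$), or obtained by invoking the classification of rainbow-free colorings for $x_1+x_2=cx_3$ in $\mathbb{Z}_p$ of Llano and Montejano \cite{LM}. The same structural input also shows that no rainbow-free coloring uses four or more colors, giving $rb\le 4$ in all cases and completing the determination $rb\in\{3,4\}$.
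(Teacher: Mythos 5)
Your reformulation of conditions (1)--(2) as ``$\langle p,-1\rangle=\mathbb Z_q^*$'' is correct and is exactly the dichotomy the paper uses, and your lower-bound construction (color $0$ uniquely and split $\mathbb Z_q^*$ into nonempty unions of cosets of $H=\langle p,-1\rangle$) is a correct, mildly more general version of the paper's coloring $C_1=\{0\}$, $C_2=\{\pm p^i\}$, $C_3=\mathbb Z_q^*\setminus C_2$. Your analysis of the solutions through $0$, showing that any rainbow-free coloring with at least three colors must have $Z=C_0\setminus\{0\}\neq\emptyset$ when $\langle p,-1\rangle=\mathbb Z_q^*$, is also sound.

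However, the heart of the theorem is left unproved, in two places. First, the unconditional upper bound $rb(\mathbb Z_q,p)\le 4$ is only asserted (``the same structural input also shows\dots''); in the paper this occupies four separate lemmas (Lemmas \ref{1}--\ref{4}), each of which merges color classes of a putative rainbow-free $4$-coloring into a $3$-coloring and derives a contradiction with the Llano--Montejano classification (Theorem \ref{3ColorPrimes}), and those arguments are not routine (e.g.\ the forced value $x=-2^{-1}$ in Lemma \ref{2}, or the interval analysis in Lemma \ref{4}). Second, for the ``only if'' direction you must exclude rainbow-free $3$-colorings with $|C_0|\ge 2$, and your proposed Cauchy--Davenport route does not close as stated: the constraint $(G+B)\cap pC_0=\emptyset$ gives only $|G+B|\le q-|C_0|$, while Cauchy--Davenport gives $|G+B|\ge |G|+|B|-1=q-|C_0|-1$, so there is no contradiction; the extremal analysis you would then need includes the case $|G+B|=|G|+|B|$, which Vosper's theorem does not cover. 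That is precisely the difficulty the Llano--Montejano classification resolves, and even your fallback of citing it still requires the case analysis translating its three structural cases into conditions (1) and (2) --- the paper does this by handling $p\equiv 2\pmod q$ via Theorem 3.5 of \cite{JLMNR} and $p\equiv-1\pmod q$ via a direct order computation. As written, the proposal establishes the easy direction and correctly identifies, but does not fill, the gap where the real work lies.
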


\begin{theorem}\label{prime}
For $p\geq 3$ prime and $\alpha\geq 1$, 
$$ rb(\mathbb{Z}_{p^\alpha},p)=\begin{cases}3&p=3,\alpha=1\\
4& p=3, \alpha\geq 2\\
\frac{p+1}{2}+1 &p\geq 5\end{cases}$$
\end{theorem}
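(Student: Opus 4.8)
The plan is to separate the lower and upper bounds and to organize everything around the anatomy of a solution to $x_1+x_2=px_3$ in $\mathbb Z_{p^\alpha}$. Reducing mod $p$ forces $x_1+x_2\equiv 0\pmod p$, so the bichromatic ``source'' pairs must come from antipodal residue classes $R_i,R_{-i}$; and once $x_1+x_2=pt$ is fixed, the admissible $x_3$ are exactly the coset $t+H$, where $H=\langle p^{\alpha-1}\rangle$ is the unique subgroup of order $p$. This gives the single constraint that drives the whole argument: if some pair with $x_1+x_2=pt$ is bichromatic, then \emph{every} element of $t+H$ must receive one of the two colors $c(x_1),c(x_2)$. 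Two further observations feed the proof. First, the equation descends along any reduction $\mathbb Z_{p^\alpha}\to\mathbb Z_{p^\beta}$, so a rainbow-free coloring of the smaller group pulls back to one of $\mathbb Z_{p^\alpha}$ with the same number of colors. Second, restricting a coloring to $H\cong\mathbb Z_p$ yields a coloring for the $\alpha=1$ problem, so on $H$ a rainbow-free coloring is symmetric when it uses at least three colors and uses at most $(p+1)/2$ colors.

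For the lower bounds I would exhibit rainbow-free colorings by lifting from a small modulus. For $p\ge5$ and every $\alpha$, take $c(x)=f(x\bmod p)$ with $f$ a symmetric $(p+1)/2$-coloring of $\mathbb Z_p$ (one color per pair $\{i,-i\}$ and one for $0$); since every solution has $\bar x_1=-\bar x_2$, the first two coordinates always share a color, so $c$ is rainbow-free and $rb\ge (p+1)/2+1$. For $p=3,\alpha=1$ every exact $2$-coloring is trivially rainbow-free, so $rb\ge3$. The one genuinely special construction is $p=3,\alpha\ge2$: here one checks by hand that $c(0)=c(3)=c(6)$, $c(1)=c(2)=c(7)=c(8)$, $c(4)=c(5)$ is a rainbow-free $3$-coloring of $\mathbb Z_9$ (the color classes $\{0,3,6\},\{1,2,7,8\},\{4,5\}$), and then lifts it via reduction mod $9$ (valid since $9\mid 3^\alpha$) to a rainbow-free $3$-coloring of $\mathbb Z_{3^\alpha}$, giving $rb\ge4$.

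The upper bounds are where the work lies: assume a rainbow-free coloring and bound the number of colors. The spine is a dichotomy on symmetry. If $c(x)=c(-x)$, every sum-zero solution is automatically monochromatic in its first two coordinates, and I would use the coset constraint across antipodal residue classes to collapse the palette onto that of a symmetric-mod-$p$ coloring, capping the colors at $(p+1)/2$ for $p\ge5$. If $c$ is not symmetric, a bichromatic antipodal pair $(u,-u)$ already forces $H$ to be $2$-colored, and one combines this with a hypothetical extra color to manufacture a rainbow triple. The boundary between $p=3$ and $p\ge5$ surfaces exactly here: for $p\ge5$ one has $(p+1)/2\ge3$ and the cross-coset constraints are rigid enough to pin the coloring down, while for $p=3$ the bound $(p+1)/2=2$ leaves precisely enough slack—one order-$3$ subgroup together with a freely $2$-colorable family of symmetric unit-pairs—to gain a single extra color, explaining the jump to $4$.

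I expect the main obstacle to be the upper bound handled uniformly in $\alpha$, and in particular the non-symmetric/general case. The subtlety is that the extremal colorings hide colors \emph{off} $H$ (in the $\mathbb Z_9$ example the colors $2$ and $3$ never appear on $H=\{0,3,6\}$), so restricting to $H$ does not by itself bound the global palette. Ruling out a stray color that lives only outside $H$ requires showing that for any element $w$ of that color one can always find either a pair summing to $pw$, or a pair containing $w$, that is bichromatic in two further colors, and this demands control of the palettes of the antipodal classes $R_i,R_{-i}$ and of how the coset constraints interact across all $p^{\alpha-1}$ cosets of $H$. Once this existence statement is secured, the symmetric case and the small exceptional cases $p=3$ follow from the coset-counting described above.
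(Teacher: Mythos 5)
Your lower bounds are correct and coincide with the paper's: for $p\geq 5$ you lift a symmetric $(p+1)/2$-coloring of $\mathbb{Z}_p$ through reduction mod $p$ (this is Proposition \ref{primelb5}), and for $p=3$, $\alpha\geq 2$ you lift a rainbow-free $3$-coloring of $\mathbb{Z}_9$ (this is Observation \ref{p=3} plus Proposition \ref{primelb3}; your explicit classes $\{0,3,6\},\{1,2,7,8\},\{4,5\}$ do check out). Your structural observations are also sound: the admissible $x_3$ for a fixed sum $pt$ form the coset $t+H$ with $H$ the subgroup of order $p$, and the equation descends along $\mathbb{Z}_{p^\alpha}\to\mathbb{Z}_{p^\beta}$.

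The upper bound, however, is where the entire difficulty of the theorem sits, and there you have a genuine gap that you yourself flag: the statement that a stray color (one living only off $H$, or only off $P_0$) can always be caught in a rainbow triple is asserted as something that ``demands control of the palettes'' and is to be ``secured,'' but no argument is given. This is precisely the content of the paper's Lemmas \ref{samecolors}, \ref{zeromono}, \ref{case12}, and \ref{atleast2}, each of which works by chaining two explicit triples so that two elements of the same residue class $R_i$ (mod $p$) are forced to share a color, and then observing that their difference is coprime to $p^{\alpha-1}$ and hence additively generates $R_i$, forcing $R_i$ to be monochromatic and yielding a contradiction. Nothing in your sketch substitutes for these constructions. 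Two further points: first, your symmetric-case claim that $c(x)=c(-x)$ ``collapses the palette onto that of a symmetric-mod-$p$ coloring'' is not automatic --- a symmetric coloring could a priori be refined inside a single residue class $R_i$, and ruling that out requires the same triple-chasing --- so the symmetric/non-symmetric dichotomy does not actually reduce the work. Second, your outline does not address the case in which all colors already appear on $R_0=p\mathbb{Z}_{p^\alpha}$; the paper handles this by induction on $\alpha$ (triples inside $R_0$ project to $\mathbb{Z}_{p^{\alpha-1}}$ via Lemma \ref{SweetDivisonTrick}), and some such recursion is needed before the palette lemmas can even be invoked, since they all presuppose $|P_i\setminus P_0|\geq 1$ for some $i$. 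As it stands the proposal is a correct plan with correct constructions on the easy side, but the proof of the upper bound is missing.
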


The values for $rb(\mathbb Z_{2^\alpha},2)$ are resolved in \cite{BSY}. In conjunction with Theorems \ref{thmpneqq} and \ref{prime}, Theorem \ref{GenKFormula} determines exact values for $rb(\mathbb Z_n,p)$. The proof for Theorem \ref{GenKFormula} is at the end of Section \ref{SectionGenKUpperBound}.

\begin{theorem}\label{GenKFormula}
Let $n$ be a positive integer, and let $p$ be prime. Let $n$ have prime factorization $n=p^\alpha \cdot q_1^{\alpha_1} \cdots q_m^{\alpha_m}$.
Then $$rb(\mathbb{Z}_n, p) = rb(\mathbb{Z}_{p^{\alpha}}, p) + \sum_{i=1}^m \Big( \alpha_i(rb(\mathbb{Z}_{q_i}, p)-2) \Big).$$ In the case that $\alpha=0$, let $rb(\mathbb Z_{p^\alpha},p)=2$.
\end{theorem}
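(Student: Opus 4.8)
The plan is to prove the identity by induction on $\Omega := \alpha_1 + \cdots + \alpha_m$, the number of prime divisors of $n$ coprime to $p$ counted with multiplicity. The base case $\Omega = 0$ is exactly $n = p^\alpha$, where the claimed value $rb(\mathbb{Z}_{p^\alpha}, p)$ is supplied by Theorem \ref{prime} (and by the stated convention when $\alpha = 0$). The entire inductive step rests on a single reduction identity: for any prime $q \neq p$ dividing $n$,
$$rb(\mathbb{Z}_n, p) = rb(\mathbb{Z}_{n/q}, p) + \big( rb(\mathbb{Z}_q, p) - 2 \big),$$
where $rb(\mathbb{Z}_q, p)$ is determined by Theorem \ref{thmpneqq}. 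Granting this, I would peel off one factor $q = q_j$ at a time: by induction $rb(\mathbb{Z}_{n/q_j}, p)$ equals the claimed sum with $\alpha_j$ replaced by $\alpha_j - 1$, and adding $rb(\mathbb{Z}_{q_j}, p) - 2$ restores the missing term. This mirrors the additive structure already used for the $k = 1$ case in Theorem \ref{SchurFactorization}, so the whole content is the reduction identity, which I would establish as two matching inequalities.

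For the lower bound $rb(\mathbb{Z}_n, p) \geq rb(\mathbb{Z}_{n/q}, p) + rb(\mathbb{Z}_q, p) - 2$, I would exhibit a rainbow-free coloring of $\mathbb{Z}_n$ using $rb(\mathbb{Z}_{n/q}, p) + rb(\mathbb{Z}_q, p) - 3$ colors. Write $t = n/q$ and let $\pi : \mathbb{Z}_n \to \mathbb{Z}_t$ be reduction modulo $t$, whose fibers are precisely the residue classes $R_i$ and whose kernel $H = \{0, t, \dots, (q-1)t\}$ is a subgroup isomorphic to $\mathbb{Z}_q$. First pull back a maximum coloring of $\mathbb{Z}_t$ along $\pi$ (assigning $x$ the color of $\pi(x)$); this is rainbow-free because any solution in $\mathbb{Z}_n$ projects to a solution in $\mathbb{Z}_t$, and it uses $rb(\mathbb{Z}_t, p) - 1$ colors. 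Then I would recolor $H$ --- on which $x_1 + x_2 = px_3$ becomes the equation in $\mathbb{Z}_q$ under the isomorphism $jt \mapsto j$ --- by a maximum coloring of $\mathbb{Z}_q$, spending $rb(\mathbb{Z}_q, p) - 2$ fresh colors and merging one of its classes into the base color already carried by $0$. Triples contained in $H$ and triples disjoint from $H$ are non-rainbow by the two maximum colorings; the delicate point is mixed triples, where one or two coordinates lie in $H$. Since the base coloring of $\mathbb{Z}_t$ is itself built by the same construction, I would maintain as an inductive invariant that $0$ lies in a small, controlled color class, so that a color collision forced at $\pi^{-1}(0)$ in a projected solution can only come from another element of $H$; this disposes of triples with exactly one coordinate in $H$, while the case of two coordinates in $H$ additionally uses that the relation $p\,\pi(x_3) = 0$ (arising because $p \mid t$ whenever $\alpha \geq 1$) pins down the remaining coordinate.

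The upper bound $rb(\mathbb{Z}_n, p) \leq rb(\mathbb{Z}_{n/q}, p) + rb(\mathbb{Z}_q, p) - 2$ is where I expect the real difficulty, and it is the technical heart of Section \ref{SectionGenKUpperBound}. Starting from an arbitrary rainbow-free coloring $c$ of $\mathbb{Z}_n$, I would analyze the residue classes $R_i$ and their palettes $P_i$ modulo $t$. Projection to $\mathbb{Z}_t$ shows that the ``base'' layer of colors is constrained like a rainbow-free coloring of $\mathbb{Z}_t$ and is therefore bounded by $rb(\mathbb{Z}_t, p) - 1$, while the colors appearing inside a fiber are bounded using that each fiber, carrying the induced equation of $\mathbb{Z}_q$, can contribute at most $rb(\mathbb{Z}_q, p) - 2$ genuinely new colors before a rainbow is forced. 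The main obstacle is the bookkeeping of color reuse across the different fibers and simultaneously across the base layer: one must show the fibers cannot all independently introduce fresh palettes, and must do so while $p$ need not be invertible modulo $t$ (since $p \mid t$ whenever $\alpha \geq 1$), which is exactly what breaks the symmetry between the roles of $q$ and $t$ and forces the separate treatment of the $p$-part encapsulated in $rb(\mathbb{Z}_{p^\alpha}, p)$. Assembling the two inequalities yields the reduction identity, and hence, by the induction above, the theorem.
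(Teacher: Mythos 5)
Your high-level plan (peel off one prime $q\neq p$ at a time via the identity $rb(\mathbb{Z}_n,p)=rb(\mathbb{Z}_{n/q},p)+rb(\mathbb{Z}_q,p)-2$, with Theorem \ref{prime} supplying the base case and two matching inequalities supplying the identity) is the paper's plan, and your upper-bound sketch is directionally consistent with Lemmas \ref{nottoobig} and \ref{GenKProjection}. The genuine gap is in your lower-bound construction, where you have the roles of the subgroup and the quotient reversed, and the reversal is fatal when $\alpha\geq 1$. You put the $\mathbb{Z}_q$-coloring on the kernel $H=t\mathbb{Z}_n$ of $\pi:\mathbb{Z}_n\to\mathbb{Z}_t$ and pull the $\mathbb{Z}_t$-coloring back along $\pi$. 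But when $p\mid t$, the condition $x_1,x_2\in H$ forces only $p\,\pi(x_3)=0$, and since multiplication by $p$ is $p$-to-one on $\mathbb{Z}_{n}$, the $p$ solutions $x_3$ of $px_3=x_1+x_2$ form a coset of a subgroup of order $p$, which meets $H$ (of coprime order $q$) in at most one point; so at least $p-1$ choices of $x_3$ lie outside $H$ and carry base colors. Concretely, take $n=pq$ with $rb(\mathbb{Z}_q,p)=4$, so that two classes of the recolored $H$ carry fresh colors; choose $x_1=ip$, $x_2=jp$ with distinct fresh colors and $x_3\equiv i+j \pmod q$ with $x_3\not\equiv 0\pmod p$. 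Then $(x_1,x_2,x_3)$ is a solution and is rainbow, because fresh colors appear only on $H$. Your claim that $p\,\pi(x_3)=0$ ``pins down the remaining coordinate'' is exactly the step that fails. The single-coordinate case is also unsound as stated: for $t=p^\alpha$ with $\alpha\geq 2$, the solution $(t,\,p-t,\,1)$ has $x_1=t\in H$ with a fresh color while $p-t$ and $1$ receive the distinct base colors $\bar c(0)$ and $\bar c(1)$ under the maximum coloring of Proposition \ref{primelb5}; your invariant about the color class of $0$ is irrelevant here, since the collision you need is between two nonzero residues. What your construction would actually require is a maximum coloring of $\mathbb{Z}_t$ that is constant on multiplication-by-$p$ orbits, which you neither build nor show to exist.

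The paper's Lemma \ref{IStep2} orients the construction the other way: the inductive coloring of $\mathbb{Z}_{n/q}$ is placed on the subgroup $q\mathbb{Z}_n\cong\mathbb{Z}_{n/q}$, and the complement is colored by residue modulo $q$ using a maximum coloring of $\mathbb{Z}_q$ in which $0$ is uniquely colored and the nontrivial classes are symmetric and stable under multiplication by $p$ (Corollary \ref{uniquecolorcor}). That orientation works precisely because $\gcd(p,q)=1$: Lemma \ref{qtriple} gives the trichotomy that $q$ divides all, none, or exactly one of the coordinates of a solution, and in the ``exactly one'' case the remaining two coordinates are forced into the same class of the $\mathbb{Z}_q$-coloring. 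No analogous trichotomy holds for divisibility by $t$ once $p\mid t$, which is exactly where your construction breaks. As written, the proposal therefore does not establish the lower bound, and hence does not prove the reduction identity.
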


\section{Schur Triples}\label{SectionSchur}
Section 1 is dedicated to proving Theorem \ref{SchurFactorization}.
In Section \ref{SectionSchurPrimes} we introduce the idea of a dominant color to describe the structural properties of colorings of $\mathbb{Z}_p$.
Additionally, we prove Proposition \ref{MinPrime}, the Schur triple counterpart of Theorem 3.2 in \cite{JLMNR}. We use Proposition \ref{MinPrime} to prove Theorem \ref{SchurPrimes4}, concluding Section 1.1. In Section \ref{SectionSchurLowerBound} we show that the lower bound of $rb(\mathbb{Z}_n,1)$ can be determined by the prime factorization of $n$.
The equivalent upper bound is proved in \ref{SectionSchurUpperBound}. Combining Sections \ref{SectionSchurLowerBound} and \ref{SectionSchurUpperBound} proves Theorem \ref{SchurFactorization}. 

\subsection{Schur Triples in $\mathbb{Z}_p$, $p$ prime}\label{SectionSchurPrimes}
Let $c$ be a coloring of $\mathbb{Z}_n$. We say a sequence $S_1,S_2,\dots, S_k$ of colors \emph{appears at position $i$} if $c(i)=S_1, c(i+1)=S_2, \dots, c(i+k-1)=S_k$. A sequence is \emph{bichromatic} if it contains exactly two colors. A color $R$ is \emph{dominant} if for $S = \{c(x):i \leq x \leq j, i < j\}$, $|S| = 2$ implies $R \in S$. That is, $R$ appears in every bichromatic string. Using dominant colors to derive a contradiction is used in \cite{JLMNR}. We also use this idea to describe the structure of rainbow-free colorings of $\mathbb{Z}_p$. However, we must show that a dominant color exists.

\begin{lemma}\label{DomColor} There exists a dominant color in every rainbow-free coloring of $\mathbb{Z}_n$. 
Furthermore, $c(1)$ is dominant.
\end{lemma}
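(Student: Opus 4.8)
The plan is to prove both claims at once by analyzing what it means for the color $R := c(1)$ to fail to be dominant, and deriving a rainbow Schur triple from that failure. Recall a bichromatic string is a maximal (or just any) run of positions using exactly two colors; $R$ is dominant iff every bichromatic string contains $R$. So suppose toward contradiction that $R = c(1)$ is \emph{not} dominant. Then there exist two colors $A, B$, both different from $R$, and a pair of consecutive-in-index elements witnessing a bichromatic string in colors $\{A,B\}$; concretely there are $x, y \in \mathbb{Z}_n$ with $c(x), c(y) \in \{A,B\}$ and $\{c(z) : x \le z \le y\} = \{A,B\}$, neither equal to $R$. The first step is to record this data cleanly: we get an interval of indices colored only with $A$ and $B$, and separately we know $c(1) = R \notin \{A,B\}$.

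Next I would exploit the Schur equation $x_1 + x_2 = x_3$ together with the presence of the element $1$. The key structural observation is that $1$ lets us \emph{shift}: if $j$ and $j+1$ are two consecutive indices, then $(1, j, j+1)$ is a Schur triple since $1 + j = j+1$. So within the bichromatic $\{A,B\}$-string, consider a place where the color actually changes from $A$ to $B$ (such a place exists because the string uses \emph{both} colors). At that transition we have consecutive indices $j, j+1$ with $\{c(j), c(j+1)\} = \{A, B\}$, i.e. $c(j) \ne c(j+1)$, both different from $R = c(1)$. Then the triple $(1, j, j+1)$ has $c(1) = R$, $c(j) \in \{A,B\}$, $c(j+1) \in \{A,B\}$, all three colors pairwise distinct since $R \notin \{A,B\}$ and $c(j) \ne c(j+1)$. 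That is a rainbow Schur triple, contradicting rainbow-freeness. This simultaneously shows $c(1)$ is dominant and hence a fortiori that a dominant color exists.

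The main obstacle I expect is being careful about the definition of bichromatic string and the exact quantifier structure of ``dominant'': the definition uses an interval $i \le x \le j$ of indices, so I must make sure that the failure of dominance really hands me two indices whose colors differ, rather than merely two colors appearing somewhere in the interval. The honest argument is that if an index-interval uses exactly the two colors $A, B$ (neither being $R$), then as we scan the interval there must be at least one consecutive pair $j, j+1$ inside it where the color switches between $A$ and $B$ (otherwise the interval would be monochromatic and could not contain both colors). Pinning down that consecutive-switch pair is the crux; once I have it, the triple $(1, j, j+1)$ closes the argument immediately. A secondary point worth a sentence is that this uses $n \ge 2$ so that $1 \ne 0$ and distinct consecutive indices genuinely exist, and that the element $1$ is never one of $j$ or $j+1$ in a problematic way, which I would check does not affect the color-distinctness conclusion.

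One subtlety to flag and dispose of: I should confirm the triple $(1, j, j+1)$ consists of three genuinely usable entries for the rainbow condition, which only requires that the three \emph{colors} be pairwise distinct, not that the three \emph{elements} be distinct; since rainbow-freeness is about colors and we have arranged $c(1), c(j), c(j+1)$ to be three distinct colors, the contradiction stands regardless of whether any of the underlying group elements coincide. With that noted, the proof is complete and yields the stronger ``$c(1)$ is dominant'' form of the lemma directly.
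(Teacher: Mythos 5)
Your argument is correct and is essentially the paper's own proof: both hinge on the observation that $(1,i,i+1)$ is a Schur triple, so at any color transition $c(i)\neq c(i+1)$ the color $c(1)$ must appear on $i$ or $i+1$. You phrase it as a contradiction via a transition inside an $\{A,B\}$-string avoiding $c(1)$, while the paper states the contrapositive directly, but the content is identical.
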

\begin{proof}
Let $c$ be a rainbow-free coloring of $\mathbb{Z}_n$.
Note that $(1, i, i+1)$ is a Schur triple for all $i \not \in \{0,1\}$.
Since $c$ is rainbow-free, either $c(i)=c(i+1)$, $c(1)=c(i)$, or $c(1)=c(i+1)$.
Thus, if $c(i)\neq c(i+1)$, then $c(1)$ must appear on either $i$ or $i+1$.
This implies that $c(1)$ is dominant.
\end{proof}

An immediate result from this lemma is that any color which doesn't appear on $1$ must be adjacent to itself or the dominant color.
Now we can relate the structure of our coloring to the presence of a rainbow triple. Without loss of generality, let $c(1)=R$ be dominant.

\begin{lemma}\label{NoBBGG}
Let $c$ be an $r$-coloring of $\mathbb{Z}_n$ with $r\geq 3$. If $BB$ and $GG$ appears in $c$, then there exists a rainbow Schur triple in $c$.
\end{lemma}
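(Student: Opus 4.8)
The plan is to argue by contradiction, extracting structure from dominance and then exhibiting a forbidden lengthening of monochromatic blocks. Suppose $c$ admits no rainbow Schur triple. By Lemma~\ref{DomColor} the color $R:=c(1)$ is dominant, and I take $B$ and $G$ to be the two distinct non-dominant colors named by the doubled strings, so that $R,B,G$ are pairwise distinct. Dominance gives the coloring a clean \emph{island} structure: if $x$ and $x+1$ are both non-$R$ and receive different colors, then $\{c(x),c(x+1)\}$ is a bichromatic pair avoiding $R$, which is impossible; hence every maximal run of consecutive non-$R$ elements is monochromatic. The hypothesis that $BB$ appears therefore says that some maximal $B$-run (a $B$-island) has length at least $2$, and likewise there is a $G$-island of length at least $2$.

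The crux is the following sumset observation. Let $I_B=[\beta,\beta+s-1]$ and $I_G=[\gamma,\gamma+t-1]$ be a $B$-island and a $G$-island with $s,t\ge 2$. I would look at the sumset $I_B+I_G=[\beta+\gamma,\ \beta+\gamma+s+t-2]$, a block of $s+t-1\ge 3$ consecutive residues. For any $z$ in this block, choose $x\in I_B$ and $y\in I_G$ with $x+y=z$; then $(x,y,z)$ is a Schur triple with $c(x)=B$ and $c(y)=G$. Since $c$ is rainbow-free and $B\ne G$, the third entry cannot introduce a new color, so $c(z)\in\{B,G\}$ (if $c(z)=R$, or any other color, the triple would be rainbow). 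Thus the whole block $I_B+I_G$ is colored using only $\{B,G\}$; as $R\notin\{B,G\}$, dominance forbids a $B$--$G$ adjacency inside it, so the block is monochromatic, and hence is contained in an island, of color $B$ or $G$, of length at least $s+t-1$.

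To finish, I would apply this to extremal islands. Let $s_B$ and $s_G$ be the maximum lengths of a $B$-island and a $G$-island; by hypothesis $s_B,s_G\ge 2$. Feeding the longest $B$-island and the longest $G$-island into the sumset step produces a monochromatic island, of color $B$ or $G$, of length at least $s_B+s_G-1$. But $s_B+s_G-1>\max(s_B,s_G)$ precisely because both $s_B,s_G\ge 2$, so this island is strictly longer than the maximum island of its own color, contradicting the definition of $s_B$ or of $s_G$. Hence no rainbow-free coloring can exhibit both $BB$ and $GG$, which is the lemma.

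The one point needing care is a wrap-around degeneracy: the identification of $I_B+I_G$ with a genuine block of distinct consecutive residues is valid only when $s+t-1\le n$. If instead $s+t-1\ge n$, the sumset already covers all of $\mathbb{Z}_n$, so some $x+y$ with $x\in I_B$ and $y\in I_G$ equals the position $1$, whose color is $R$, giving the rainbow triple $(x,y,1)$ outright; thus one may assume $s+t-1\le n$ and run the interval argument. I expect this boundary case, together with carefully recording the island-structure consequences of dominance, to be the main bookkeeping hurdle, while the sumset length inequality is the short conceptual heart of the proof.
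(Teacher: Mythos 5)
Your proof is correct, but it takes a genuinely different route from the paper's. The paper argues locally: since $R$ is dominant, the strings $BBR$ and $GGR$ must appear at some positions $i$ and $j$, and then the three Schur triples $(i,j+2,i+j+2)$, $(i+2,j,i+j+2)$, $(i+1,j+1,i+j+2)$ successively force $c(i+j+2)=R$ and exhibit a rainbow triple outright --- no extremal choice, no case on $n$. You instead develop a global structure (maximal non-$R$ runs are monochromatic ``islands''), show that the sumset of a $B$-island and a $G$-island of lengths $s,t\geq 2$ is a monochromatic island of length at least $s+t-1$, and contradict the maximality of the longest $B$- or $G$-island; your handling of the degenerate wrap-around case $s+t-1\geq n$ (where the sumset hits the $R$-colored position $1$) is the right patch and is needed for the argument to close. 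The trade-off: the paper's proof is shorter and entirely local, using only the $R$'s bordering the two doubled strings; yours is longer but extracts the island decomposition of a rainbow-free coloring as a reusable structural fact, and the ``sumset lengthens islands'' observation is a nice conceptual core. One small point to make explicit if you write this up: the lemma's $B$ and $G$ must be read as two distinct non-dominant colors (as both you and the paper implicitly assume); with $B$ equal to the dominant color the statement fails, e.g.\ for the symmetric colorings of $\mathbb{Z}_p$ described after Lemma~\ref{symmetry}.
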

\begin{proof}
Let $c$ be an $r$-coloring of $\mathbb{Z}_n$ with $r\geq 3$ such that $BB$ and $GG$ appears in $c$.
Without loss of generality, assume $R$ is dominant, and $c$ contains $BB$ and $GG.$
Then, the sequence $BBR$ must appear at some position $i$ and the sequence $GGR$ must appear at some position $j$.

Consider the Schur triple $(i, j+2, i+j+2)$. Since $c(i)=B$, and $c(j+2)=R$, then either $c$ contains a rainbow Schur triple, or $c(i+j+2)$ is $R$ or $B$.
Assume the second case, and consider the Schur triple $(i+2, j, i+j+2)$.
Since $c(i+2)=R$, and $c(j)=G$ then either $c$ contains a rainbow Schur triple or $c(i+j+2)$ is $R$.
Again, assume the second case, and finally consider the triple $(i+1, j+1, i+j+2)$.
Since $c(i+1)=B$, $c(j+1)=G$, and $c(i+j+2)=R$, this triple is rainbow.
Therefore, $c$ contains a rainbow Schur triple.
\end{proof}

Therefore, if $c$ is a rainbow-free coloring of $\mathbb{Z}_n$ with $R$ dominant,  either $GG$ or $BB$ can appear in $c$, but not both.
Next we show that there are ways to re-order colorings while maintaining whether or not Schur triples are rainbow.

\begin{lemma}\label{C-Hat}
Let $c$ be an $r$-coloring of $\mathbb{Z}_n$.
If $m$ is relatively prime to $n$, then $c$ has a rainbow Schur triple if and only if $\hat{c}(x):=c(mx)$ contains a rainbow Schur triple. Additionally, the cardinality of each color class will be maintained.
\end{lemma}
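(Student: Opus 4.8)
The plan is to show that the map $x \mapsto mx$ is a bijection on $\mathbb{Z}_n$ that preserves the equation $x_1 + x_2 = x_3$, and that this bijection exactly carries the coloring $c$ to $\hat{c}$ in a way that transports Schur triples to Schur triples bijectively. The key observation is that multiplication by $m$ is a permutation of $\mathbb{Z}_n$ precisely because $\gcd(m,n)=1$, so $m$ is a unit and has a multiplicative inverse $m^{-1}$ modulo $n$. I would set up this bijection first and record its inverse, since that inverse is what lets me argue in both directions of the ``if and only if.''

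The heart of the argument is a correspondence between Schur triples. First I would verify that $(x_1, x_2, x_3)$ is a Schur triple if and only if $(mx_1, mx_2, mx_3)$ is a Schur triple: indeed, multiplying the relation $x_1 + x_2 = x_3$ through by $m$ gives $mx_1 + mx_2 = mx_3$, and conversely dividing by $m$ (i.e.\ multiplying by $m^{-1}$) recovers the original. Thus $\varphi(x) = mx$ induces a bijection on the set of Schur triples of $\mathbb{Z}_n$. Next I would relate the rainbow property under the two colorings. Since $\hat{c}(x) = c(mx)$, the colors $\hat{c}(x_1), \hat{c}(x_2), \hat{c}(x_3)$ are exactly $c(mx_1), c(mx_2), c(mx_3)$. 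Therefore $(x_1,x_2,x_3)$ is a rainbow triple under $\hat{c}$ if and only if $(mx_1, mx_2, mx_3)$ is a rainbow triple under $c$. Combining these two equivalences: $\hat{c}$ has a rainbow Schur triple if and only if $c$ has one, which is exactly what we want.

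For the cardinality claim, I would note that the color class of color $i$ under $\hat{c}$ is $\{x : c(mx) = i\}$, which equals $m^{-1} \cdot \{y : c(y) = i\}$, the image of the color class of $i$ under $c$ via the bijection $y \mapsto m^{-1} y$. Since a bijection preserves cardinality, each color class has the same size under $c$ and under $\hat{c}$.

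I do not expect any single step to be a genuine obstacle, as the result is essentially a change-of-variables statement exploiting that units of $\mathbb{Z}_n$ act as equation-preserving automorphisms. The only point requiring care is making the two directions of the biconditional genuinely symmetric: one must use that $\hat{c}(x) = c(mx)$ together with the invertibility of multiplication by $m$, rather than silently assuming one direction. Phrasing the whole argument through the bijection $\varphi$ and its inverse $\varphi^{-1} = (x \mapsto m^{-1}x)$ makes both directions fall out cleanly and simultaneously handles the cardinality statement.
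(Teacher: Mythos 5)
Your proposal is correct and follows essentially the same route as the paper: both arguments rest on the fact that multiplication by a unit $m$ permutes $\mathbb{Z}_n$, carries Schur triples to Schur triples (multiply the defining relation through by $m$, and by $m^{-1}$ for the converse), and transports the coloring so that rainbow triples correspond, with the cardinality claim following from bijectivity. If anything, your explicit use of $m^{-1}$ to make both directions of the biconditional symmetric is slightly more careful than the paper's write-up.
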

\begin{proof}
Let $(x_1, x_2, x_3)$ be a triple in $c$.
By definition, $x_1 + x_2 = x_3$ in $\mathbb{Z}_n$ is equivalent to
\begin{align*}
x_1 + x_2 = sn + r \\
x_3 = tn + r,
\end{align*}
as equations in the integers for some $s,t\in \mathbb{Z}$. Multiply both equations by $m$ to get
\begin{align*}
mx_1 + mx_2 = msn + mr \\
mx_3 = mtn + mr
\end{align*}
Therefore, $mx_1 + mx_2 \equiv mr \mod n$, and $mx_3 \equiv mr \mod n$, so $mx_1 + mx_2 \equiv mx_3 \mod n$. 
Thus, $(mx_1, mx_2, mx_3)$ is rainbow in $\hat{c}$ if and only if $(x_1, x_2, x_3)$ is rainbow in $c$.

Finally, the last statement of Lemma \ref{C-Hat} follows from the fact that if $m$ is relatively prime to $n$, then the map $F: x \mapsto mx$ is a bijection.
\end{proof}

Our next result is the Schur equation counterpart to Theorem 3.2 in \cite{JLMNR}.

\begin{prop}\label{MinPrime}
Let $p$ be prime.
Then every $3$-coloring $c$ of $\mathbb{Z}_p$ with $\min(|R|,|G|,|B|) > 1$ contains a rainbow Schur triple.
\end{prop}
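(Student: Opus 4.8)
The plan is to exploit the structural machinery built up in the preceding lemmas, specializing everything to $\mathbb{Z}_p$ with $p$ prime, where the condition $\min(|R|,|G|,|B|) > 1$ means each color class has size at least $2$. By Lemma~\ref{DomColor} we may assume $c$ is rainbow-free (otherwise we are done immediately) and that $R := c(1)$ is the dominant color. By Lemma~\ref{NoBBGG}, not both $BB$ and $GG$ can appear, so without loss of generality assume $GG$ never appears in $c$. Since $|G| \geq 2$ and no two green elements are adjacent, every green element is flanked on both sides by non-green elements; by dominance of $R$, every bichromatic adjacent pair contains $R$, so in particular each green element's neighbors that are not green must be red (a green-blue adjacency would be a bichromatic pair missing the dominant color $R$). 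This forces a rigid local picture: each green sits isolated between reds.

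First I would make precise the constraint that $G$ is "surrounded by $R$.'' If $c(x) = G$, then the pairs $(x-1, x)$ and $(x, x+1)$ are each bichromatic unless a neighbor is also green, which is excluded; so both neighbors are red. This pins down that every green element $g$ has $c(g-1) = c(g+1) = R$. Next I would use Lemma~\ref{C-Hat}: multiplying by a unit $m$ coprime to $p$ permutes $\mathbb{Z}_p$ while preserving rainbow-freeness and color class sizes, so I have freedom to normalize the position of a chosen green element. The goal is to locate a green element $g$ and a blue element $b$ whose arithmetic relationship, together with the red neighbors forced above, produces a Schur triple with all three colors distinct.

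The key combinatorial step is to build an explicit rainbow triple from a green element $g$ (flanked by reds at $g-1, g+1$) and a blue element $b$ (which exists since $|B| \geq 2$). The strategy mirrors the proof of Lemma~\ref{NoBBGG}: I would examine a family of candidate Schur triples such as $(g, b, g+b)$, $(g-1, b, g+b-1)$, $(g+1, b, g+b+1)$, and their analogues obtained by shifting the blue coordinate using $b$'s own forced neighbors, then argue that rainbow-freeness forces a cascade of color equalities on the sum-coordinates that cannot all hold simultaneously. The arithmetic of $\mathbb{Z}_p$ is essential here: since $p$ is prime, every nonzero shift is invertible, so the sums $g+b$, $g+b \pm 1$, $g+b \pm 2$ are genuinely distinct elements for small shifts, and I can chase the dominant color $R$ through them exactly as in Lemma~\ref{NoBBGG} until a position is forced to be simultaneously green, blue, and red.

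The main obstacle I anticipate is handling the case analysis cleanly when the forced red neighborhoods of green and blue elements overlap or wrap around, and ensuring the candidate triples actually consist of three \emph{distinct} coordinates (so that the "rainbow'' condition is meaningful rather than degenerate). In particular I must verify that the shifts do not accidentally coincide modulo $p$ — e.g. that $g+b \neq g$, that the chosen blue element is not adjacent to the green one in a way that collapses the construction, and that $p \geq 5$ is not secretly needed (the statement is for all primes, so the small primes $p = 2, 3$ must be checked to see the hypothesis $\min > 1$ is either vacuous or the conclusion holds trivially since $|R|+|G|+|B| = p$ leaves no room). Managing these degeneracies, rather than the core triple-chasing argument, is where the real care is required.
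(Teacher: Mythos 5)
There is a genuine gap, and it sits exactly where your sketch stops being explicit. Your structural setup (take $R=c(1)$ dominant via Lemma~\ref{DomColor}, exclude one of $BB$, $GG$ via Lemma~\ref{NoBBGG}, conclude each green is flanked by reds) is fine, but the core step --- producing a rainbow triple from a green $g$ and a blue $b$ by ``chasing'' colors as in Lemma~\ref{NoBBGG} --- is only asserted, and the specific triples you list do not close. The triples $(g,b,g+b)$, $(g-1,b+1,g+b)$, $(g+1,b-1,g+b)$ force $c(g+b)\in\{G,B\}$ and constrain $g+b\pm 1$, but unlike Lemma~\ref{NoBBGG} you have no control over $b$'s neighbors ($BB$ is allowed, so they may be blue or red in any combination), and no contradiction falls out. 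More tellingly, your argument never identifies where the hypothesis $\min(|R|,|G|,|B|)>1$ does real work: the rainbow-free colorings with a singleton class (e.g.\ $0$ uniquely colored and the rest symmetric) satisfy every structural fact you derive, so any purely local triple-chase that ignores class sizes must fail. You gesture at ``$|B|\geq 2$'' only to get existence of a blue element, which needs only $|B|\geq 1$.

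The paper's proof takes a different and essentially counting-based route that you are missing. It assumes $R$ is the \emph{minimum} class and dilates by the least nonzero $i$ with $c(i)=R$ (Lemma~\ref{C-Hat}, using that $p$ is prime so $i$ is a unit), so that after renormalization the minimum-size color is the dominant one. Then, with $GG$ absent, every green is followed by a red, giving $|R|\geq |G|$; and since some blue is followed by a non-blue, dominance forces a $BR$ occurrence whose red is not one of the reds following greens, so $|R|\geq |G|+1$. This contradicts $|R|=\min(|R|,|G|,|B|)$. The two ideas you would need to add to your write-up are precisely these: (i) use the dilation to make the dominant color the \emph{smallest} class, and (ii) replace the explicit triple construction with the injection-plus-one counting argument. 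Without them your proposal does not constitute a proof.
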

\begin{proof}
For the sake of contradiction, assume that $c$ is a rainbow-free $3$-coloring of $\mathbb{Z}_p$ and min$(|R|,|G|,|B|) > 1$.
Without loss of generality, assume that $|R|=\textrm{min}(|R|,|G|,|B|)$.
Since there are at least two elements of $\mathbb{Z}_p$ colored $R$, there exists a minimal element $1\leq i \leq p-1$ such that $c(i)=R$
Because $p$ is prime, $i$ is relatively prime to $p$ and $i$ has a multiplicative inverse. 
Let $\hat{c}(x):=c(ix)$ so that $\hat{c}(1)= R$.
Therefore, by Lemma \ref{DomColor}, $R$ is dominant in $\hat{c}$.
By Lemma \ref{NoBBGG}, $BB$ and $GG$ cannot both appear in $\hat{c}$.
Without loss of generality, assume that $GG$ does not appear in $\hat{c}$.
Because $R$ is dominant, $R$ must follow each $G$, so $|R| \geq |G|$.
Furthermore, $BR$ must appear in $\hat{c}$.
This implies that $|R| \geq |G|+1$ in $\hat c$ which implies $|R| \geq |G|+1$ in $c$ by Lemma \ref{C-Hat}.
This contradicts our assumption that $|R| =\min(|R|,|G|,|B|)$.
\end{proof}

\begin{lemma}\label{symmetry}
If $c$ is a rainbow-free $r$-coloring of $\mathbb{Z}_p$ for a prime $p$ with $r>2$, then $c(x)=c(-x)$.
\end{lemma}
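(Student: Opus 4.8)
The plan is to exploit the one Schur triple that directly links $x$ and $-x$, namely $(x,-x,0)$, which is a solution since $x+(-x)=0=1\cdot 0$. Because $c$ is rainbow-free, for every $x$ at least two of $c(x),c(-x),c(0)$ must coincide. If $c(x)=c(-x)$ there is nothing to prove, so I would fix, for contradiction, some $x_0\neq 0$ with $c(x_0)\neq c(-x_0)$. Then exactly one of $c(x_0),c(-x_0)$ equals $c(0)$ (if both did, they would be equal to each other), and after possibly replacing $x_0$ by $-x_0$ I may assume $c(x_0)=c(0)$ while $c(-x_0)\neq c(0)$.

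Next I would normalize using Lemma \ref{C-Hat}. Since $p$ is prime, $x_0$ is a unit, so $\hat c(t):=c(x_0 t)$ is again rainbow-free with the same color-class sizes. After this rescaling $\hat c(0)=\hat c(1)=c(x_0)=:A$ while $\hat c(-1)=c(-x_0)=:B\neq A$, and by Lemma \ref{DomColor} the color $A=\hat c(1)$ is dominant in $\hat c$. The purpose of the normalization is to seat the dominant color on both $0$ and $1$ and to convert the reflection $x_0\mapsto -x_0$ into the concrete neighbor $-1$, so that dominance, which is a statement about consecutive integers, can interact with Schur triples.

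The heart of the argument is a local propagation. Since $r>2$, some element $z$ carries a third color $C\notin\{A,B\}$. I would examine the Schur triple $(z,-1,z-1)$: rainbow-freeness together with $\hat c(z)=C\neq B=\hat c(-1)$ forces $\hat c(z-1)\in\{B,C\}$. On the other hand $z-1$ and $z$ are consecutive, so dominance of $A$ forces $\hat c(z-1)\in\{A,C\}$. As $A,B,C$ are pairwise distinct, intersecting these two constraints yields $\hat c(z-1)=C$. Iterating this single step along $z,z-1,z-2,\dots$ propagates the color $C$ around all of $\mathbb{Z}_p$, so $\hat c\equiv C$, contradicting $\hat c(0)=A\neq C$. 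Undoing the rescaling via Lemma \ref{C-Hat} then contradicts the existence of $x_0$, and symmetry $c(x)=c(-x)$ follows.

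The main obstacle is the opening reduction: recognizing that the only failure of symmetry comes from the configuration in which $0$ shares its color with exactly one of $x,-x$, and then choosing the rescaling that makes the dominant color land on $0$ and $1$ so that Lemma \ref{DomColor} becomes usable against the reflection. The routine but essential check is that the intersection $\{B,C\}\cap\{A,C\}$ collapses to $\{C\}$, which requires all three colors to be genuinely distinct and is exactly where the hypothesis $r>2$ enters.
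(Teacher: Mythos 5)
Your proof is correct, but it runs along a genuinely different track from the paper's. The paper rescales twice, setting $\hat c(x)=c(ix)$ and $\bar c(x)=c(-ix)$, observes via Lemma \ref{DomColor} that $c(i)$ is dominant in $\hat c$ and $c(-i)$ is dominant in $\bar c$, and then uses the reflection identity $\hat c(x)=\bar c(-x)$ to transfer dominance of $c(-i)$ back into $\hat c$; two distinct dominant colors force the coloring to use only those two colors, contradicting $r>2$. You instead perform a single rescaling that seats the mismatched pair at $\pm 1$, and then run a local propagation: for any element $w$ of a third color $C$, the Schur triple $(w,-1,w-1)$ forces $\hat c(w-1)\in\{B,C\}$ while dominance of $A=\hat c(1)$ on the consecutive pair forces $\hat c(w-1)\in\{A,C\}$, so $C$ marches backwards around all of $\mathbb{Z}_p$ and collides with $\hat c(0)=A$. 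Both arguments rest on Lemmas \ref{C-Hat} and \ref{DomColor}; the paper's is shorter and more conceptual, while yours is more explicit about where the rainbow triple actually lives and uses only dominance on adjacent pairs (which is exactly what Lemma \ref{DomColor} proves). One small remark: your opening reduction via the triple $(x_0,-x_0,0)$ to arrange $\hat c(0)=\hat c(1)$ is harmless but unnecessary, since the final contradiction can be drawn against $\hat c(1)=A$ just as well as against $\hat c(0)$, so you could start directly from any $x_0$ with $c(x_0)\neq c(-x_0)$.
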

\begin{proof}
Let $c$ be a rainbow-free $r$-coloring of $\mathbb{Z}_p$. For the sake of contradiction, assume that there exists $i,-i$ with $c(i) \neq c(-i)$.
Without loss of generality, let $c(i)=R$ and $c(-i)=G$.
Now, let $\hat{c}(x):=c(ix)$ and let $\bar{c}(x):=c(-ix)$. By Lemma \ref{C-Hat}, $\hat{c}$ and $\bar{c}$ are both rainbow-free.
Since $\hat{c}(1)=c(i)=R$ and $\bar{c}(1)=c(-i)=G$, $R$ is dominant in $\hat{c}$, and $G$ is dominant in $\bar{c}$.
Notice that $\hat{c}(x)=\bar{c}(-x)$, so if two colors are adjacent at some position in $\hat{c}$, then they are also adjacent at some position in $\bar{c}$.
Thus, since $G$ is dominant in $\bar{c}$, $G$ must also appear in every bichromatic sequence in $\hat{c}$, and, consequently, $G$ is also dominant in $\hat{c}$.
If both $R$ and $G$ are dominant in $\hat{c}$, then $\hat{c}$ must only contain $R$ and $G$, and $r=2$; this is a contradiction.
\end{proof}

Note that this lemma shows that the coloring from $1$ to $p-1$ must be symmetric in a rainbow-free coloring of $\mathbb{Z}_p$.

\begin{remark}
For any prime $p \geq 5$, $\mathbb{Z}_p$ can be colored with three colors by coloring zero uniquely and coloring $1$ to $p-1$ with two colors in any way such that $c(x) = c(-x)$ for all $x$. This coloring is rainbow-free since any three group elements which witness three colors must contain $0$, and in order to make a Schur triple of three distinct elements where one of the elements is $0$ the other two elements must be $x$ and $-x$ for some $x$ (see also Corollary 2 in \cite{LM}).
\end{remark}

 Now we have enough information about the structure of rainbow-free colorings to prove Theorem \ref{SchurPrimes4}. A color class $C$ is \textit{singleton} if $|C|=1$.

\begin{proof} [Proof of Theorem \ref{SchurPrimes4}]
For the sake of contradiction, suppose that $r+1=rb(\mathbb Z_p, 1)>4$ for a prime $p \geq 5$, and let $c$ be a rainbow-free $r$-coloring of $\mathbb{Z}_p$ with $r>3$. 
Note that since $c$ is rainbow-free, at least one of the color classes in $c$ must contain more than one element.
Partition the color classes of $c$ into three sets to define $\hat{c}$, an exact $3$-coloring of $\mathbb{Z}_p$.
We use the union of the color classes within each part of the partition as the color classes for $\hat{c}$.
Since we are concatenating colors, $\hat{c}$ is also rainbow-free. 
By Proposition \ref{MinPrime}, regardless of how the color classes of $c$ are partitioned, there exists some color class in $\hat{c}$ with exactly one element.
If $r \geq 5$, then there exists a partition of the five or more color classes such that each color class has more than one element. Therefore, $r = 4$.

Furthermore, if two or more color classes are not singleton, then there would exist a partition of the color classes that yields no singleton color classes in $\hat{c}$. Therefore, all but one of the four color classes in $c$ must be singleton.

If there are three singleton color classes in $c$, then there exists an $x\not=0$ such that $c(x)\not=c(-x)$. This contradicts Lemma \ref{symmetry}, and $c$ cannot be rainbow-free.

Thus, there does not exist an exact rainbow-free $r$-coloring of $\mathbb{Z}_p$ for $r >3$ and $p \geq 5$.
\end{proof}

\subsection{Lower Bound}\label{SectionSchurLowerBound}
In order to prove the lower bound for $rb(\mathbb Z_n,1)$, we examine the relationship between Schur triples in $\mathbb Z_n$ and $\mathbb Z_{\frac{n}{m}}$ where $m$ divides $n$.

\begin{lemma} \label{SchurProjectTriples}
If there exists a Schur triple of form $(x_1,x_2,x_3)$ in $\mathbb{Z}_n$ where $m|x_1,x_2,x_3$ for some $m|n$, $m,n\in \mathbb{Z}$,  then there exists a Schur triple of the form $(x_1/m ,x_2/m ,x_3/m)$ in $\mathbb{Z}_\frac{n}{m}$. 
\end{lemma}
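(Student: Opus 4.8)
The plan is to drop down into the integers and simply cancel a common factor. By definition a Schur triple $(x_1,x_2,x_3)$ in $\mathbb{Z}_n$ satisfies $x_1+x_2\equiv x_3 \pmod n$, so after choosing integer representatives I have $n \mid (x_1+x_2-x_3)$. The divisibility hypothesis lets me write $x_i = m y_i$ for integers $y_i$, where $y_i$ is exactly the representative $x_i/m$.

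First I would substitute $x_i = m y_i$ into the relation $n \mid (x_1+x_2-x_3)$ to obtain $n \mid m(y_1+y_2-y_3)$. Since $m \mid n$, I write $n = m\cdot(n/m)$ and cancel the common factor $m$, yielding $(n/m) \mid (y_1+y_2-y_3)$. This is precisely the statement $y_1+y_2\equiv y_3 \pmod{n/m}$, i.e., that $(x_1/m,\,x_2/m,\,x_3/m)$ is a Schur triple in $\mathbb{Z}_{n/m}$, which is the desired conclusion.

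The only point requiring a little care — and the closest thing here to an obstacle — is verifying that the objects $x_i/m$ are well-defined elements of $\mathbb{Z}_{n/m}$ rather than merely integers depending on a choice of representative. To handle this I would observe that the multiples of $m$ in $\mathbb{Z}_n$ form a subgroup isomorphic to $\mathbb{Z}_{n/m}$ under the map $my \mapsto y$, so dividing a representative by $m$ and reducing modulo $n/m$ is independent of the chosen representative. With this noted, the cancellation step is legitimate and the argument is otherwise just the routine bookkeeping outlined above; there is no genuine combinatorial difficulty, and the lemma follows in a few lines.
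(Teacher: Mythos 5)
Your proposal is correct and follows essentially the same route as the paper: both arguments pass to integer representatives and cancel the common factor $m$ from the divisibility relation defining the triple (the paper phrases this as dividing the two equations $x_1+x_2=qn+r$ and $x_3=tn+r$ by $m$ after checking $m\mid r$, while you work directly with $n\mid(x_1+x_2-x_3)$, which is slightly cleaner but not substantively different). No issues.
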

\begin{proof}
By definition, $x_1 + x_2 = x_3$ in $\mathbb{Z}_n$ implies that in the integers
\begin{align*}
x_1 + x_2 = qn + r \\
      x_3 = tn + r,
\end{align*}
for some $q,t \in \mathbb{Z}$. Divide both equations by $m$ to get

\begin{align*}
\frac{x_1}{m} + \frac{x_2}{m} = q\frac{n}{m} + \frac{r}{m} \\
      \frac{x_3}{m} = t\frac{n}{m} + \frac{r}{m}.
\end{align*}

Now we must check that $\frac{r}{m}$ is an integer. Since $m|(x_1 + x_2 - qn)$, we know $m|r$.

By definition, this means that there exists a Schur triple of the form $(x_1/m,x_2/m,x_3/m)$ in $\mathbb{Z}_\frac{n}{m}$.
\end{proof}

This shows that Schur triples can be ``projected'' from the cyclic group $\mathbb Z_n$ to a subgroup $\mathbb Z_{\frac{n}{m}}$.
Next, we will show another property of Schur triples related to the divisibility of a triple's elements by a prime.

\begin{lemma}\label{SchurPrimeDivisibility}
For a positive integer $n$ and a prime $p$, if $x_1+x_2 \equiv x_3 \mod np$, then $p$ cannot divide exactly two of $(x_1,x_2,x_3)$.
\end{lemma}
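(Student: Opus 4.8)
The plan is to reduce the given congruence modulo $p$ and then derive a contradiction by exploiting the linearity of the Schur relation: a linear equation among three quantities, two of which are divisible by $p$, must force the third to be divisible by $p$ as well.

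First I would observe that $x_1 + x_2 \equiv x_3 \pmod{np}$ means $np \mid (x_1 + x_2 - x_3)$. Since $p \mid np$, this immediately yields $x_1 + x_2 \equiv x_3 \pmod{p}$, so it suffices to work modulo $p$. This reduction is the only place where the modulus $np$ (rather than just $p$) plays any role, and it is essentially automatic.

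Next I would suppose, for contradiction, that $p$ divides exactly two of $x_1, x_2, x_3$. Writing the reduced congruence as $x_1 + x_2 - x_3 \equiv 0 \pmod{p}$, I would split into the three cases according to which of the three variables is the one $\emph{not}$ divisible by $p$. In every case two of the three summands $x_1$, $x_2$, $-x_3$ vanish modulo $p$, so the congruence forces the remaining summand to vanish modulo $p$ as well; hence $p$ divides all three. This contradicts the assumption that $p$ divides exactly two of them, completing the argument.

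I do not anticipate any substantive obstacle: the step requiring the most care is the passage from modulus $np$ to modulus $p$, after which the conclusion follows in one line from linearity. The three cases are symmetric and routine, so the proof is short once the reduction is in place.
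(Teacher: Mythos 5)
Your proof is correct and follows essentially the same idea as the paper's: the linear relation forces the third term to be divisible by $p$ once two of them are. The paper carries this out with explicit integer equations and remainder bookkeeping in each case, whereas your reduction modulo $p$ packages the same argument more cleanly; no substantive difference.
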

\begin{proof}
If $x_1+x_2 \equiv x_3 \mod np$, then there exist integers $c_1$, $c_2$, and $r_0$ such that $x_1+x_2=c_1np+r_0$ and $x_3=c_2np+r_0$.

Assume  that $p$ divides $x_1$ and $x_2$. Then there exist integers $c_3$ and $c_4$ such that $x_1 = c_3 p$ and $x_2 = c_4 p$.
We know there exist integers $c_5$ and $r_1$ with $0 \leq r_1 < p$ such that $x_3 = c_5 p + r_1$, so we want to show $r_1 = 0$.
Immediately, we see that $c_3 p + c_4 p = c_1 n p + r_0$ and $c_5 p + r_1 = c_2 n p + r_0$, which, after substituting for $r_0$, shows us $c_3 p + c_4 p = c_1 n p + c_5 p + r_1 - c_2 n p$.
Solving for $r_1$ gives us
\begin{equation*}
\begin{split}
r_1 &= c_3 p + c_4 p - c_1 n p - c_5 p + c_2 n p \\
&=p (c_3 + c_4 - c_1 n - c_5 + c_2 n)
\end{split}
\end{equation*}
This means that $p$ divides $r_1$, forcing $r_1 = 0$.
Thus, $p$ divides $x_3$.

Now assume $p$ divides $x_1$ and $x_3$, i.e. there exist integers $c_6$ and $c_7$ such that $x_1 = c_6 p$ and $x_3 = c_7 p$.
We know there exist integers $c_8$ and $r_2$ with $0 \leq r_2 < p$ such that $x_2 = c_8 p + r_2$, so we want to show $r_2 = 0$.
Immediately, we see that $c_6 p + c_8 p + r_2 = c_1 n p + r_0$ and $c_7 p = c_2 n p + r_0$, which, after substituting for $r_0$, shows us $c_6 p + c_8 p + r_2 = c_1 n p + c_7 p - c_2 n p$.
Solving for $r_2$ gives us
\begin{equation*}
\begin{split}
r_2 &= c_1 n p + c_7 p - c_2 n p - c_6 p - c_8 p \\
&=p (c_1 n + c_7 - c_2 n - c_6 - c_8)
\end{split}
\end{equation*}
This means that $p$ divides $r_2$, forcing $r_2 = 0$.
Thus, $p$ divides $x_2$.
By symmetry, this case is identical to the case where $p$ divides $x_2$ and $x_3$.

Therefore, we can see that if $p$ divides two elements in $(x_1,x_2,x_3)$, then $p$ must also divide the third.
\end{proof}

\begin{lemma}\label{IStep}
Let $p,t$ be positive integers with $p$ prime. If there exists a rainbow-free $r$-coloring of $\mathbb{Z}_t$, then there exists a rainbow-free $r+rb(\mathbb Z_p,1)-2$-coloring of $\mathbb Z_{pt}$.
\end{lemma}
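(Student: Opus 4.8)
The plan is to build, from the given rainbow-free $r$-coloring of $\mathbb{Z}_t$, an explicit rainbow-free coloring of $\mathbb{Z}_{pt}$ using $r+rb(\mathbb{Z}_p,1)-2$ colors, by partitioning $\mathbb{Z}_{pt}$ according to divisibility by $p$. Write $s=rb(\mathbb{Z}_p,1)$. First I would fix a maximum rainbow-free coloring $c_p$ of $\mathbb{Z}_p$ that is \emph{symmetric}, meaning $c_p(x)=c_p(-x)$, and in which $0$ is the only element of its color class. Such a coloring exists for every prime $p$: for $p\geq 5$ it is exactly the coloring described in the Remark following Lemma \ref{symmetry}, and for $p\in\{2,3\}$ one checks directly that coloring $0$ with its own color and all nonzero elements with a single second color is symmetric and rainbow-free. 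Being a maximum coloring, $c_p$ uses $s-1$ colors, and deleting the singleton color of $0$ leaves exactly $s-2$ colors on the nonzero residues.

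Next I would define the coloring $c$ of $\mathbb{Z}_{pt}$. On the subgroup $\{x:p\mid x\}$, which is isomorphic to $\mathbb{Z}_t$ via $x\mapsto x/p$, I color $x$ by $c_t(x/p)$, using the $r$ colors of $c_t$. On each remaining element, i.e.\ each $x$ with $p\nmid x$, I color $x$ by $c_p(x \bmod p)$, drawing from the palette of $s-2$ colors that $c_p$ uses on nonzero residues and declaring this palette disjoint from the $r$ subgroup colors. This yields an exact coloring with $r+(s-2)=r+rb(\mathbb{Z}_p,1)-2$ colors: every subgroup color occurs because $c_t$ is surjective, and every nonzero-residue color occurs because each residue $j\in\{1,\dots,p-1\}$ is attained by some element of $\mathbb{Z}_{pt}$.

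To verify that $c$ is rainbow-free, let $(x_1,x_2,x_3)$ be any Schur triple in $\mathbb{Z}_{pt}$. By Lemma \ref{SchurPrimeDivisibility} (taking $n=t$), $p$ divides none, exactly one, or all three of the $x_i$. If $p$ divides all three, then $(x_1/p,x_2/p,x_3/p)$ is a Schur triple in $\mathbb{Z}_t$ by Lemma \ref{SchurProjectTriples}, hence non-rainbow under $c_t$, so $(x_1,x_2,x_3)$ is non-rainbow. If $p$ divides none, then reducing mod $p$ gives a Schur triple of nonzero residues in $\mathbb{Z}_p$, which is non-rainbow under $c_p$, so again $(x_1,x_2,x_3)$ is non-rainbow.

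The remaining case, where $p$ divides exactly one coordinate, is where I expect the real work to lie and is the main obstacle. Here the divisible coordinate sits in the subgroup and so receives a color disjoint from the other two, meaning the triple can fail to be rainbow only if the other two elements share a color. If $p\mid x_1$ (or symmetrically $p\mid x_2$), reducing mod $p$ forces the two remaining nonzero residues to be equal, so they automatically share a $c_p$-color. If instead $p\mid x_3$, reduction gives $\overline{x_1}\equiv-\overline{x_2}\pmod p$, and the two elements share a color \emph{precisely because $c_p$ was chosen symmetric}; this is the reason symmetry, and not merely rainbow-freeness, must be imposed on $c_p$ at the outset. Dispatching all three positions of the divisible coordinate, together with the disjointness of palettes that prevents the distinguished element from matching, completes the proof.
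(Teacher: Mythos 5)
Your proposal is correct and follows essentially the same route as the paper: both color the multiples of $p$ by lifting the given coloring of $\mathbb{Z}_t$, color the non-multiples by a symmetric pattern on nonzero residues mod $p$, and then split Schur triples into the same three cases via Lemmas \ref{SchurPrimeDivisibility} and \ref{SchurProjectTriples}. The only cosmetic difference is that the paper writes down the explicit symmetric pattern (residues $\pm 1$ get one color, the remaining nonzero residues a second), whereas you invoke an arbitrary maximum symmetric rainbow-free coloring of $\mathbb{Z}_p$ with $0$ uniquely colored, of which the paper's pattern is an instance.
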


\begin{proof}
Let $t,p$ be positive integers such that $p$ is a prime.
Assume $\hat{c}$ is a rainbow-free $r$-coloring of $\mathbb{Z}_t$.
Then let $c$ be an exact $(r+rb(\mathbb Z_p,1)-2)$-coloring (if $p=2$ or $p=3$, then $c$ is an exact $(r+1)$-coloring. Otherwise, $c$ is an exact $r+2$ coloring) of $\mathbb{Z}_{pt}$ as follows:
$$c(x):=\begin{cases}
      \hat{c}(x/p) & x \equiv 0 \mod p\\ 
      r+1 & x \equiv 1 \text{ or } p-1 \mod p \\
      r+2 & \text{otherwise}
   \end{cases}
$$
Notice that if $(x_1,x_2,x_3)$ is a Schur triple in $\mathbb{Z}_{pt}$, then there are three cases by Lemma \ref{SchurPrimeDivisibility}: $p$ divides exactly one of $(x_1,x_2,x_3)$, $p$ divides each of $(x_1,x_2,x_3)$, or $p$ divides none of $(x_1,x_2,x_3)$.

\textbf{Case 1:} The two terms $x_i,x_j$ where $i,j\in\{1,2,3\}$ that are not divisible by $p$ are either additive inverses modulo $p$ or are equal modulo $p$. Thus, $c(x_i)=c(x_j)$ and $(x_1,x_2,x_3)$ does not form a triple.

\textbf{Case 2:} The coloring of each $x_i$ is inherited from $\hat{c}$. Since $\hat{c}$ does not admit rainbow triples, we know that this triple will not be rainbow  by Lemma \ref{SchurProjectTriples}.

\textbf{Case 3:} The three integers in the triple will be colored from $\{r+1, r+2\}$, so the triple will not be rainbow. In each case, $c$ is a rainbow-free $r+rb(\mathbb Z_p,1)-2$-coloring of $\mathbb Z_{pt}$.
\end{proof}

\begin{prop} \label{SchurLowerbd}
For any positive integer $n=p_1^{\alpha_1}\cdots p_m^{\alpha_m}$,
$$rb(\mathbb{Z}_n,1) \geq 2+\sum_{i=1}^{m} \Big( \alpha_i(rb(\mathbb{Z}_{p_i},1)-2) \Big).$$
\end{prop}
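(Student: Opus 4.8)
The plan is to prove the lower bound by exhibiting, for each $n$, a rainbow-free coloring of $\mathbb{Z}_n$ using exactly $1+\sum_{i=1}^m\alpha_i(rb(\mathbb{Z}_{p_i},1)-2)$ colors; since $rb$ is by definition one more than the maximum size of a rainbow-free coloring, this establishes $rb(\mathbb{Z}_n,1)\geq 2+\sum_{i=1}^m\alpha_i(rb(\mathbb{Z}_{p_i},1)-2)$. The construction is built by iterating Lemma \ref{IStep}, which is precisely the single-prime induction step: given a rainbow-free $r$-coloring of $\mathbb{Z}_t$, it produces a rainbow-free $(r+rb(\mathbb{Z}_p,1)-2)$-coloring of $\mathbb{Z}_{pt}$.

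I would argue by induction on $\Omega(n)=\sum_{i=1}^m\alpha_i$, the total number of prime factors of $n$ counted with multiplicity. For the base case, take $n=1$ (or equivalently $\mathbb{Z}_1$), where the empty sum gives the bound $rb\geq 2$, matching the trivial rainbow-free $1$-coloring. For the inductive step, write $n=p\cdot t$ where $p$ is one of the prime factors $p_i$ and $t=n/p$, so that $\Omega(t)=\Omega(n)-1$. By the inductive hypothesis there is a rainbow-free coloring of $\mathbb{Z}_t$ using $1+\sum_j\beta_j(rb(\mathbb{Z}_{p_j},1)-2)$ colors, where the $\beta_j$ are the exponents in the factorization of $t$ (so $\beta_i=\alpha_i-1$ and $\beta_j=\alpha_j$ for $j\neq i$). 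Applying Lemma \ref{IStep} with this prime $p=p_i$ yields a rainbow-free coloring of $\mathbb{Z}_{pt}=\mathbb{Z}_n$ using exactly
$$\Big(1+\sum_j\beta_j(rb(\mathbb{Z}_{p_j},1)-2)\Big)+\big(rb(\mathbb{Z}_{p_i},1)-2\big)=1+\sum_{i=1}^m\alpha_i(rb(\mathbb{Z}_{p_i},1)-2)$$
colors, because increasing $\beta_i$ back to $\alpha_i$ adds exactly one copy of $(rb(\mathbb{Z}_{p_i},1)-2)$. This is one fewer than the claimed bound, giving the desired inequality.

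I expect the main bookkeeping obstacle to be verifying that Lemma \ref{IStep} can legitimately be iterated, i.e.\ that the colors it introduces are genuinely new and that the coloring it produces remains \emph{exact} (surjective) at each stage, so that the color count is exactly additive rather than merely an upper bound. Lemma \ref{IStep} introduces the two reserved colors $r+1$ and $r+2$ (or a single new color when $p\in\{2,3\}$, which is exactly the case $rb(\mathbb{Z}_p,1)-2=1$), disjoint from the inherited palette on the multiples of $p$, so disjointness is automatic; the surjectivity of the inherited part follows since the map $x\mapsto x/p$ on multiples of $p$ carries a surjective coloring of $\mathbb{Z}_t$ to a surjective coloring of the subgroup $p\mathbb{Z}_{pt}\cong\mathbb{Z}_t$. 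The one point requiring care is that the formula $rb(\mathbb{Z}_p,1)-2$ correctly unifies the small-prime cases ($p=2,3$ contributing $1$ each, by the Remark following Theorem \ref{SchurPrimes4}) with the generic case $p\geq 5$ (contributing $2$ each, by Theorem \ref{SchurPrimes4}); since Lemma \ref{IStep} is already phrased in terms of $rb(\mathbb{Z}_p,1)-2$, this is handled uniformly and no separate cases are needed.
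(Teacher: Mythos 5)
Your proposal is correct and follows essentially the same route as the paper: induction on the number of prime factors of $n$ counted with multiplicity, with Lemma \ref{IStep} supplying the inductive step (the paper bases the induction at $n$ prime rather than $n=1$, a cosmetic difference). Your additional remarks on exactness and on the uniform treatment of $p\in\{2,3\}$ via $rb(\mathbb{Z}_p,1)-2$ are sound and consistent with how Lemma \ref{IStep} is stated.
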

\begin{proof}
If $n$ is prime, there is nothing to show. Suppose that the claim holds true for $n$ where $n$ has $N$ prime factors. 

Assume that $n=p_1^{\alpha_1}\cdots p_m^{\alpha_m}$ where $\alpha_1+\cdots+\alpha_m=N+1$. By the induction hypothesis, there exists a rainbow-free $r$-coloring of $\mathbb Z_{n/p_1}$ where $$ r=1+\sum_{i=1}^{m} \Big( \alpha_i(rb(\mathbb{Z}_{p_i},1)-2) \Big)-rb(\mathbb Z_{p_1},1)+2.$$
Therefore, by Lemma \ref{IStep}, there exists a rainbow-free $r+rb(\mathbb Z_{p_1},1)-2$ coloring of $\mathbb Z_n$.
Thus, by induction $$rb(\mathbb{Z}_n,1) \geq 2+\sum_{i=1}^{m} \Big( \alpha_i(rb(\mathbb{Z}_{p_i},1)-2) \Big).$$
\end{proof}

\subsection{Upper Bound}\label{SectionSchurUpperBound}
To establish the upper bound for $rb(\mathbb{Z}_{n}, 1)$, we consider residue classes  and their corresponding residue palettes under $c$.

\begin{lemma} \label{LimitedColors}
Let $R_0, R_1, \dots, R_{t-1}$ be the residue classes modulo $t$ for $\mathbb{Z}_{st}$, and let $P_0, P_1, \cdots, P_{t-1}$ be the corresponding residue palettes under rainbow-free $c$.
Then $|P_i \setminus P_0| \leq 1$ for $1\leq i \leq t-1$.
\end{lemma}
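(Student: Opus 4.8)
The plan is to argue by contradiction via a single well-chosen Schur triple. I would suppose that $|P_i \setminus P_0| \ge 2$ for some $i$ with $1 \le i \le t-1$, which means $P_i$ contains two distinct colors that do not appear in $P_0$. I would then fix witnesses $a, b \in R_i$ with $c(a) \ne c(b)$ and $c(a), c(b) \notin P_0$, and build a rainbow triple out of $a$, $b$, and their difference.

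The key observation driving the argument is that the difference of two elements of the same residue class modulo $t$ lands in $R_0$. Since $a \equiv b \equiv i \pmod t$, the element $z := b - a$ satisfies $z \equiv 0 \pmod t$, so $z \in R_0$ and hence $c(z) \in P_0$. Note also that $z \ne 0$, since otherwise $a = b$ would force $c(a) = c(b)$, contrary to our choice of witnesses.

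With this in hand, I would examine the triple $(a, z, b)$. Because $a + z = a + (b - a) = b$ in $\mathbb{Z}_{st}$, it is a Schur triple, and its three colors are $c(a), c(z), c(b)$. By construction $c(a) \ne c(b)$, while $c(z) \in P_0$ together with $c(a), c(b) \notin P_0$ forces $c(z) \ne c(a)$ and $c(z) \ne c(b)$. Thus all three colors are distinct, the triple is rainbow, and this contradicts the assumption that $c$ is rainbow-free. Consequently $|P_i \setminus P_0| \le 1$ for each $1 \le i \le t-1$.

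There is no serious obstacle here: the entire content lies in spotting that same-residue differences are multiples of $t$, so the "connecting" element $z$ of the Schur triple is automatically colored from $P_0$ and therefore disagrees with both chosen out-of-palette colors. The only remaining verifications are routine, namely that $(a, z, b)$ genuinely solves $x_1 + x_2 = x_3$ and that the three elements carry three distinct colors, both of which follow immediately from $c(a), c(b) \notin P_0 \ni c(z)$ and $c(a) \ne c(b)$.
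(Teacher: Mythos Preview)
Your argument is correct and is essentially the same as the paper's: both fix two elements of $R_i$ carrying distinct colors outside $P_0$ and observe that their difference lies in $R_0$, yielding a rainbow Schur triple. The only cosmetic difference is that the paper writes the witnesses explicitly as $mt+i$ and $nt+i$ and forms the triple $(mt-nt,\,nt+i,\,mt+i)$, which is exactly your $(z,a,b)$ up to the symmetry of $x_1$ and $x_2$.
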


\begin{proof}
Assume that $|P_i \setminus P_0| \geq 2$.
Then $R_i$ must contain at least two elements which receive colors that do not appear in $P_0$.
Without loss of generality, let $G$ and $B$ denote two colors in $P_i\setminus P_0$.
Then there exists two integers $m$ and $n$ such that $c(mt + i)= G$ and $c(nt + i)=B$.
Consider the Schur triple $(mt-nt, nt + i , mt + i)$.
Notice that $mt-nt \equiv 0 \mod t$, $c(mt-nt)\neq G , B$.
Thus, we have a rainbow triple under $c$ in $\mathbb{Z}_{st}$, which is a contradiction.
Therefore, $|P_i \setminus P_0| \leq 1$ for $1\leq i \leq t-1$.
\end{proof}

Lemma \ref{LimitedColors} lets us create a well-defined reduction of a coloring of $\mathbb Z_st$ to a coloring of $\mathbb Z_t$.

\begin{lemma}\label{stProjection}
Let $s$ and $t$ be positive integers. 
Let $R_0, R_1, \dots, R_{t-1}$ be the residue classes modulo $t$ for $\mathbb{Z}_{st}$ with corresponding residue palettes $P_i$.
Suppose $c$ is a coloring of $\mathbb{Z}_{st}$ where $|P _i\setminus P_0| \leq 1$.
Let $\hat{c}$ be a coloring of $\mathbb{Z}_t$ given by 
\[ \hat{c}(i):=
\begin{cases} 
   P_i\setminus P_0 & \textrm{if } |P_i\setminus P_0| = 1\\
   \alpha & \textrm{otherwise}
\end{cases}
\]
where $\alpha \not \in P_i$ for $0 \leq i \leq t$.
If $\hat{c}$ contains a rainbow Schur triple, then $c$ contains a rainbow Schur triple.
\end{lemma}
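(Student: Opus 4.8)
The plan is to show that any rainbow Schur triple of $\hat c$ in $\mathbb{Z}_t$ can be lifted to a rainbow Schur triple of $c$ in $\mathbb{Z}_{st}$. So suppose $(a,b,d)$ is a rainbow triple under $\hat c$, i.e. $a+b\equiv d\pmod t$ with $\hat c(a),\hat c(b),\hat c(d)$ pairwise distinct; write $A=\hat c(a)$, $B=\hat c(b)$, $D=\hat c(d)$. I want to produce a Schur triple $(x_1,x_2,x_3)$ in $\mathbb{Z}_{st}$ with $x_1\in R_a$, $x_2\in R_b$, $x_3\in R_d$ whose three $c$-colors are distinct. Two structural facts drive everything. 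First, reduction modulo $t$ is a ring homomorphism on $\mathbb{Z}_{st}$ (since $t\mid st$), so for \emph{any} $x_1\in R_a$ and $x_2\in R_b$ the sum automatically satisfies $x_1+x_2\in R_{a+b}=R_d$; thus I only ever need to control colors, not congruences. Second, the definition of $\hat c$ translates color information into palette information: if $\hat c(i)=\gamma\neq\alpha$ then $\gamma\in P_i\setminus P_0$, so $\gamma$ is genuinely realized on some element of $R_i$ and $\gamma\notin P_0$; whereas if $\hat c(i)=\alpha$ then $P_i\setminus P_0=\emptyset$, i.e.\ $P_i\subseteq P_0$.

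Because $A,B,D$ are distinct, at most one of them equals $\alpha$, which splits the argument into two cases according to whether $\alpha$ lands on a summand position. In the first case, $\alpha\notin\{A,B\}$ (so both summand colors are genuine, while $D$ may or may not be $\alpha$). Here I would anchor on the summands: choose $x_1\in R_a$ with $c(x_1)=A$ and $x_2\in R_b$ with $c(x_2)=B$ (both exist since $A\in P_a$ and $B\in P_b$), and set $x_3:=x_1+x_2\in R_d$. The point is that $A,B\notin P_0$, so if $c(x_3)=A$ then $A\in P_d\setminus P_0$, which is either $\{D\}$ or $\emptyset$; in both subcases this forces $A=D$ or a contradiction, and symmetrically for $B$. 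Hence $c(x_3)\notin\{A,B\}$ and $(x_1,x_2,x_3)$ is rainbow.

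In the second case, exactly one summand color is $\alpha$; by the symmetry of $x_1$ and $x_2$ in $x_1+x_2=x_3$ I may assume $A=\alpha$, so $B,D\neq\alpha$ are genuine. This is the main obstacle: I cannot simply select an element of $R_a$ colored $\alpha$, because $\alpha$ is a fictitious color that occurs nowhere in $\mathbb{Z}_{st}$. The fix is to anchor on the two genuine positions instead: pick $x_2\in R_b$ with $c(x_2)=B$ and $x_3\in R_d$ with $c(x_3)=D$, and define $x_1:=x_3-x_2$, which lands in $R_a$ since $d-b\equiv a\pmod t$. Now the crux is verifying that the solved-for element has an admissible color, and this is exactly where $\hat c(a)=\alpha$ pays off: it gives $P_a\subseteq P_0$, so $c(x_1)\in P_0$, while $B,D\notin P_0$, forcing $c(x_1)\notin\{B,D\}$. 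Thus $(x_1,x_2,x_3)$ is again rainbow, and in all cases $c$ admits a rainbow Schur triple.
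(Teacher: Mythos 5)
Your proof is correct and follows essentially the same route as the paper: split on whether the $\alpha$-colored position is a summand, anchor on the two positions carrying genuine colors, solve the Schur equation for the third element, and use the fact that a non-$\alpha$ color lies in $P_i\setminus P_0$ (hence outside $P_0$ and outside any palette not designated with it) to rule out color collisions. The only cosmetic difference is that you make explicit the subcase analysis of $P_d\setminus P_0$ being $\{D\}$ or $\emptyset$, which the paper compresses into one line.
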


\begin{proof}
Suppose $(x_1, x_2, x_3)$ is a rainbow Schur triple in $\hat{c}$. 
Then, at least two of $x_1, x_2, x_3$ must receive a color  other than $\alpha$. We consider the following two cases.

\textbf{Case 1:} Neither $x_1$ nor $x_2$ receive color $\alpha$.

Without loss of generality, assume that $c(x_1)=G$ and $C(x_2)=B$. 
This implies that there exist $n,m$ such that $c(nt+x_1)=G$ and $c(mt+x_2)=B$. 
There is a Schur triple of the form $(nt+x_1, mt+x_2, (n+m)t+(x_1+x_2))$ in $\mathbb{Z}_{st}$.
Since $x_1+x_2 \equiv x_3 \mod t$, $(n+m)t+(x_1+x_2)$ is in the residue class $R_{x_3}$. 
As  $\hat{c}(x_3)\neq G,B$, we have $G,B\notin P_{x_3}$. 
Therefore, the triple $(nt+x_1, mt+x_2, (n+m)t+(x_1+x_2))$ is rainbow.

\textbf{Case 2:} One of $x_1$ or $x_2$ is colored $\alpha$.

Without loss of generality, assume that $c(x_1)=\alpha$, $c(x_2)=B$, and $c(x_3)=G$.
Then $c(nt+x_2)=B$ for some $n$, and $c(mt+x_3)=G$  for some $m$. 
There is a Schur triple of the form $((m-n)t+(x_3-x_2), nt+x_2, mt+x_3)$ in $\mathbb{Z}_{st}$.
Since $x_1+x_2 \equiv x_3 \mod t$, $(m-n)t+(x_3-x_2)$ is in the residue class $R_{x_1}$. 
As $\hat{c}(x_1)=\alpha$, we have $G,B\notin P_{x_1}$. 
Therefore, the triple $((m-n)t+(x_3-x_2), nt+x_2, mt+x_3)$ is rainbow.

Hence, if $\hat{c}$ has a rainbow Schur triple, then $c$ has a rainbow Schur triple.
\end{proof}

We use the coloring described in Lemma \ref{stProjection} to prove an upper bound for $rb(\mathbb{Z}_{st}, 1)$.

\begin{prop}\label{UpperBound}
Let $s$ and $t$ be positive integers.
Then $rb(\mathbb{Z}_{st},1) \leq rb(\mathbb{Z}_s,1)+rb(\mathbb{Z}_t,1)-2$.
\end{prop}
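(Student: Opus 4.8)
The plan is to prove the contrapositive form of the inequality: it suffices to show that every rainbow-free coloring $c$ of $\mathbb{Z}_{st}$ uses at most $rb(\mathbb{Z}_s,1)+rb(\mathbb{Z}_t,1)-3$ colors. Indeed, if no rainbow-free coloring can attain $rb(\mathbb{Z}_s,1)+rb(\mathbb{Z}_t,1)-2$ colors, then any coloring with that many colors admits a rainbow Schur triple, which is exactly the claim. So I would fix a rainbow-free $c$ and split its color set along the residue palettes: since $P_0$ is disjoint from each $P_i\setminus P_0$, the full color set decomposes as $P_0\cup\bigcup_{i=1}^{t-1}(P_i\setminus P_0)$, and the number of colors used equals $|P_0|+\big|\bigcup_{i=1}^{t-1}(P_i\setminus P_0)\big|$. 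I will bound the two summands separately.

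For the second summand I would invoke Lemma \ref{LimitedColors}, which guarantees $|P_i\setminus P_0|\le 1$ and hence makes the reduced coloring $\hat{c}$ of $\mathbb{Z}_t$ from Lemma \ref{stProjection} well-defined. Because $c$ is rainbow-free, the contrapositive of Lemma \ref{stProjection} shows $\hat{c}$ is rainbow-free, so $\hat{c}$ uses at most $rb(\mathbb{Z}_t,1)-1$ colors. The non-$\alpha$ colors of $\hat{c}$ are precisely those of $\bigcup_{i=1}^{t-1}(P_i\setminus P_0)$ (the residue $i=0$ contributes $\alpha$, since $P_0\setminus P_0=\emptyset$), and $\alpha$ is genuinely used, so the color count of $\hat{c}$ is $1+\big|\bigcup_{i=1}^{t-1}(P_i\setminus P_0)\big|$. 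This yields $\big|\bigcup_{i=1}^{t-1}(P_i\setminus P_0)\big|\le rb(\mathbb{Z}_t,1)-2$.

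For the first summand I would restrict $c$ to the residue class $R_0$, the multiples of $t$, which form a subgroup of $\mathbb{Z}_{st}$ isomorphic to $\mathbb{Z}_s$ via division by $t$; this restriction uses exactly $|P_0|$ colors. I claim it is rainbow-free as a coloring of $\mathbb{Z}_s$: given a Schur triple $(y_1,y_2,y_3)$ in $\mathbb{Z}_s$, writing $y_1+y_2=y_3+ks$ gives $ty_1+ty_2\equiv ty_3\bmod st$, so $(ty_1,ty_2,ty_3)$ is a Schur triple in $\mathbb{Z}_{st}$ lying in $R_0$. Were the induced coloring of $\mathbb{Z}_s$ to contain a rainbow triple, this lifting would produce a rainbow Schur triple of $c$, contradicting rainbow-freeness. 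Hence $|P_0|\le rb(\mathbb{Z}_s,1)-1$. Adding the two bounds gives $|P_0|+\big|\bigcup_{i=1}^{t-1}(P_i\setminus P_0)\big|\le rb(\mathbb{Z}_s,1)+rb(\mathbb{Z}_t,1)-3$, as required.

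The main obstacle is the bookkeeping that keeps the color count free of double-counting: one must verify carefully that the non-$\alpha$ colors of $\hat{c}$ are exactly $\bigcup_{i=1}^{t-1}(P_i\setminus P_0)$ and that this set is disjoint from $P_0$, so the two bounds genuinely add rather than overlap. A secondary subtlety is the lifting of Schur triples from $\mathbb{Z}_s$ into the subgroup $R_0\subset\mathbb{Z}_{st}$, which is the converse direction of Lemma \ref{SchurProjectTriples} and must be checked directly rather than cited.
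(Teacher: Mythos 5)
Your proposal is correct and follows essentially the same route as the paper's proof: it bounds $|P_0|$ by restricting $c$ to the subgroup $R_0\cong\mathbb{Z}_s$ and bounds the remaining colors via the reduced coloring $\hat{c}$ of $\mathbb{Z}_t$ from Lemmas \ref{LimitedColors} and \ref{stProjection}, then adds the two bounds. The only difference is that you spell out two details the paper leaves implicit --- the exact color count $r=|P_0|+|\hat{c}|-1$ and the lifting of Schur triples from $\mathbb{Z}_s$ into $R_0$ --- both of which you verify correctly.
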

\begin{proof}
Let $c$ be an exact $r$-coloring of $\mathbb{Z}_{st}$, and let $\hat{c}$ be a coloring constructed from $c$ as in Lemma \ref{stProjection}.
Notice that the set of colors used in $c$ is comprised of the colors in $R_0$ and each color used in $\hat{c}$ other than $\alpha$.
Thus,  $r=|P_0|+|\hat{c}|-1$, where $|\hat{c}|$ is the number of colors appearing in $\hat{c}$. 
If $c$ is a rainbow-free coloring of $\mathbb{Z}_{st}$, then $R_0$ is a rainbow-free coloring of $\mathbb{Z}_s$. Thus, $|P_0| \leq rb(\mathbb{Z}_s, 1)-1$.
Also, $\hat{c}$ is a rainbow-free coloring of $\mathbb{Z}_t$, so $|\hat{c}| \leq rb(\mathbb{Z}_t, 1)-1$.
Thus, $r \leq rb(\mathbb{Z}_s, 1)+rb(\mathbb{Z}_t, 1)-3$.
If we let $c$ be the maximum rainbow-free coloring of $\mathbb{Z}_{st}$, then $r=rb(\mathbb{Z}_{st},1)-1$.
This shows that $rb(\mathbb{Z}_{st}, 1) \leq rb(\mathbb{Z}_s, 1)+rb(\mathbb{Z}_t, 1)-2$.
\end{proof}

Using both the upper bound we  just established and the lower bound established in Proposition \ref{SchurLowerbd} of Section \ref{SectionSchurLowerBound}, we prove Theorem \ref{SchurFactorization}.

\begin{proof} [Proof of Theorem \ref{SchurFactorization}]
Recursively applying Proposition \ref{UpperBound} to prime factors of $n$ yields
$$rb(\mathbb{Z}_n,1) \leq 2+\sum_{i=1}^{m} \Big(\alpha_i(rb(\mathbb{Z}_{p_i},1)-2) \Big).$$
Since this is identical to the lower bound from Proposition \ref{SchurLowerbd} in Section \ref{SectionSchurLowerBound}, we can conclude
$$rb(\mathbb{Z}_n, 1) = 2+\sum_{i=1}^{m} \Big(\alpha_i(rb(\mathbb{Z}_{p_i},1)-2) \Big).$$
\end{proof}

\section{Triples for $x_1+x_2=px_3$, $p$ prime}
\label{pPrime}
Section \ref{pPrime} is dedicated to proving Theorem \ref{GenKFormula}. In Section \ref{pneqq}, we establish exact values for $rb(\mathbb Z_q,p)$ where $p\neq q$ are prime. Finding an exact value for $rb(\mathbb{Z}_p,p)$ is more difficult, and is the subject of Section \ref{primepowers}. Some properties of rainbow-free colorings of $\mathbb Z_q$ are used in the construction of the general lower bound in Section \ref{SectionGenKLowerBound}.
The equivalent upper bound is proved in \ref{SectionGenKUpperBound}. Combining Sections \ref{SectionGenKLowerBound} and \ref{SectionGenKLowerBound} proves Theorem \ref{GenKFormula}. 

\subsection{Exact values for $rb(\mathbb Z_q,p)$, $p\neq q$ prime}
\label{pneqq}
Lemmas \ref{1}, \ref{2}, \ref{3}, \ref{4} establish the upper bound $rb(\mathbb Z_q,p)\leq 4$. These lemmas are proven by assuming that there exists a rainbow-free $r$-coloring $c$ with $r\geq 4$, and reducing $c$ to a $3$-coloring $\hat c$. In each case, we find that $\hat c$ does not conform to the structure of a rainbow-free $3$-coloring outlined in Theorem \ref{3ColorPrimes} proven in \cite{LM}. For convenience, we include Theorem \ref{3ColorPrimes} and the necessary definitions from \cite{LM}.

For a subset $X\subseteq \mathbb Z_q^*$ and $a\in \mathbb Z_q^*$ define $aX:=\{ax\mid x\in X\}$, $X+a:=\{x+a\mid x\in X\}$, and $X-a:= X+(-a)$. We say the set $aX$ is the \emph{dilation} of $X$ by $a$.
Let $\langle x\rangle\leq \mathbb Z_q^*$ denote the subgroup multiplicatively generated by $x$. 
A subset $X\in \mathbb Z_q^*$ is \emph{$H$-periodic} if $X$ is the union of cosets of $H$, where $H\leq \mathbb Z_p^*$. 
In the case that $X$ is $\langle -1\rangle$-periodic, we say that $X$ is \emph{symmetric}. This coincides with the notion that $X$ is symmetric if and only if $X=-X$.

\begin{theorem}\label{3ColorPrimes}[\cite{LM}, Theorem 2]
A 3-coloring $\mathbb{Z}_q = A \cup B \cup C$ with $1 \leq |A| \leq |B| \leq |C|$ is rainbow-free for $x_1+x_2=kx_3$ if and only if, up to dilation, one of the following holds.

\begin{enumerate}
\item $A = \{0\}$ and both $B$ and $C$ are symmetric and $\langle k \rangle$-periodic subsets.
\item $A = \{1\}$ for 
\begin{itemize}
\item [(i)] $k=2 \mod q$, $(B-1)$ and $(C-1)$ are symmetric and $\langle 2 \rangle$-periodic subsets.
\item [(ii)] $k = -1 \mod q$, $(B\setminus \lbrace 2 \rbrace ) + 2^{-1}$, $(C\setminus \lbrace 2 \rbrace) + 2^{-1}$ are symmetric subsets.
\end{itemize}
\item $|A| \geq 2 $, for $k = -1 \mod q$ and $A,B,$ and $C$ are arithmetic progressions with difference $1$ such that $A = \lbrack a_1, a_2 - 1 \rbrack$, $B = \lbrack a_2, a_3 - 1 \rbrack$, and $C = \lbrack a_3, a_1 - 1 \rbrack$, with $(a_1 + a_2 + a_3) = 1$  or $2$.
\end{enumerate}
\end{theorem}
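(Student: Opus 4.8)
The plan is to reduce the classification to a statement about sumsets and then feed it into structure theorems from additive combinatorics. Write the color classes as a partition $\mathbb{Z}_q = A \cup B \cup C$. A rainbow solution of $x_1 + x_2 = kx_3$ assigns the three distinct colors to $x_1, x_2, x_3$, and since $x_1, x_2$ play symmetric roles the only freedom is which class contains $x_3$. Hence $c$ is rainbow-free if and only if the three avoidance conditions
\[ (B+C) \cap kA = \emptyset, \quad (A+C) \cap kB = \emptyset, \quad (A+B) \cap kC = \emptyset \]
all hold. Because multiplication by $k\neq 0$ permutes $\{kA, kB, kC\}$ into a partition of $\mathbb{Z}_q$, these are equivalent to the inclusions $B + C \subseteq k(B \cup C)$, $A + C \subseteq k(A \cup C)$, and $A + B \subseteq k(A \cup B)$; I would record these first, since they drive everything. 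The phrase ``up to dilation'' is legitimate because dilation by a unit preserves both the partition and the equation (the multiplier argument of Lemma \ref{C-Hat} applied with a general $k$), so I may normalize freely.

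The easy direction is to verify that each listed family satisfies the three avoidance conditions. For family 1 one uses that $B = -B$ and $C = -C$ forces $0 \notin B+C$, while $kB = B$ and $kC = C$ ($\langle k \rangle$-periodicity) give the other two conditions. For family 2(i) with $k = 2$, I would observe that the substitution $x \mapsto x - 1$ carries $x_1 + x_2 = 2x_3$ to itself while sending $A = \{1\}$ to $\{0\}$; thus 2(i) is precisely family 1 (for $k=2$) pulled back through this translation, which both verifies it and explains its form. Families 2(ii) and 3 (both with $k = -1$) are checked directly, using symmetry of the translated sets and, for family 3, the fact that the sum of two consecutive arcs is again an arc.

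For the hard direction I would argue that rainbow-freeness forces all three sumsets to have nearly minimal size, then apply the tight and near-tight structure theorems. Since $kA, kB, kC$ partition $\mathbb{Z}_q$, each avoidance condition yields $|B+C| \leq q - |A| = |B| + |C|$, and symmetrically $|A+C| \leq |A|+|C|$, $|A+B| \leq |A|+|B|$. The Cauchy--Davenport theorem then confines each sumset to the two-element window $\{|X|+|Y|-1,\ |X|+|Y|\}$. I would split on $|A|$. When $|A| \geq 2$, all three sets have size at least $2$ and each sumset has size at most $q-2$, so Vosper's theorem (exact case) and the $\mathbb{Z}_q$ analogue of Freiman's $3k-4$ theorem (the ``$+1$'' case) force $A$, $B$, $C$ to be arithmetic progressions with a common difference; dilating this difference to $1$ makes them three consecutive arcs partitioning $\mathbb{Z}_q$, and substituting back into the avoidance conditions isolates $k = -1$ together with $a_1 + a_2 + a_3 \in \{1,2\}$, giving family 3. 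When $|A| = 1$ I would normalize $A = \{0\}$, or by dilation $A = \{1\}$, and read off the structure directly: the $A = \{0\}$ computation sketched above yields family 1, while $A = \{1\}$ forces $k \in \{2, -1\}$ and, via the translation trick for $k=2$ and a direct symmetry analysis for $k = -1$, yields 2(i) and 2(ii).

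The main obstacle is the ``$+1$'' case in the hard direction, where a sumset attains size exactly $|X|+|Y|$ rather than the Cauchy--Davenport minimum: here Vosper's theorem does not apply, and one must invoke the more delicate description of pairs with $|X+Y| \leq |X|+|Y|$ in $\mathbb{Z}_q$ (the Hamidoune--R\o dseth / $3k-4$-type results) and then reconcile the structures produced by all three sumset conditions simultaneously. The second subtlety is extracting the arithmetic constraints on $k$ (namely $k \in \{2,-1\}$ once $|A|=1$ with $A \neq \{0\}$, and $k=-1$ once $|A| \geq 2$) as \emph{consequences} rather than hypotheses; I expect these to emerge from substituting the forced progression or periodic structure back into the avoidance conditions and asking when the dilate $k(X \cup Y)$ can coincide with a genuine sumset $X+Y$.
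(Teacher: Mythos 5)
The first thing to say is that the paper contains no proof of Theorem \ref{3ColorPrimes}: it is quoted verbatim from \cite{LM} (their Theorem 2) and used as a black box, so the only meaningful comparison is with the proof in that reference. Measured against it, your outline is essentially a reconstruction of the published argument: Llano and Montejano likewise encode rainbow-freeness as the three avoidance conditions $(B+C)\cap kA=\emptyset$, $(A+C)\cap kB=\emptyset$, $(A+B)\cap kC=\emptyset$, use the partition $kA\cup kB\cup kC=\mathbb{Z}_q$ to get $|X+Y|\leq |X|+|Y|$ for each pair, pin the sumsets to the Cauchy--Davenport window $\{|X|+|Y|-1,\,|X|+|Y|\}$, and then apply critical-pair machinery --- Vosper's theorem in the tight case and the Hamidoune--R{\o}dseth inverse theorem in the ``$+1$'' case --- to force the progression/periodic structure, with dilation used exactly as you use it to normalize. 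Your translation trick identifying family 2(i) with family 1 at $k=2$ is correct (the equation $x_1+x_2=2x_3$ is translation-invariant) and is a clean way to see that case.

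Two cautions before you could call this a proof rather than a blueprint. First, the inverse theorems carry hypotheses your size bounds do not automatically satisfy: Vosper is fine once $|A|\geq 2$ (every sumset then has size at most $q-2$), but Hamidoune--R{\o}dseth requires both summands to have size at least $3$ and the sumset to stay several elements away from all of $\mathbb{Z}_q$; when $|A|=2$ one has $|B+C|$ possibly as large as $q-2$, outside its range, so the cases $|A|=2$ and near-full sumsets (and small $q$) need separate elementary arguments. Relatedly, a $2$-element set is an arithmetic progression with respect to \emph{every} difference, so ``reconciling a common difference across the three pairs'' is not automatic when some class is small. Second, the arithmetic constraints on $k$ --- $k\in\{2,-1\}$ when $A=\{1\}$, and $k=-1$ with $a_1+a_2+a_3\in\{1,2\}$ when $|A|\geq 2$ --- are asserted to ``emerge'' but constitute a genuine computation: for family 3 one must substitute three consecutive arcs into $A+B\subseteq k(A\cup B)$ and show that a dilate of a union of two adjacent arcs can contain the arc $A+B$ only for $k=-1$, then track endpoints to extract the condition on $a_1+a_2+a_3$; for $A=\{1\}$ the exceptional element in 2(ii) (the set $B\setminus\{2\}$ rather than $B$) signals that the symmetry analysis has a boundary case your sketch does not yet see. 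Finally, note that your very first equivalence already uses invertibility of $k$, so the statement implicitly assumes $k\not\equiv 0 \bmod q$ (as in \cite{LM}, and as holds in this paper's application, where $k=p\neq q$); that hypothesis should be made explicit.
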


Suppose that $q\geq 5$ is prime. Let $c$ be a coloring of $\mathbb Z_q$ with color classes $C_1,\dots, C_r$ with $1\leq|C_1|\leq |C_2|\leq \cdots \leq |C_r|$ and $r\geq 4$.

\begin{obs}
If $C_1=\{0\}$ and $C_2=\{x\}$, then $(x,-x,0)$ is a rainbow triple for $x\neq 0$.
\end{obs}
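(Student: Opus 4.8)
The plan is to verify directly that the triple $(x,-x,0)$ is both a valid solution of the governing equation and receives three distinct colors, so that no structural machinery is needed. First I would check that $(x,-x,0)$ is a triple for $x_1+x_2=px_3$ in $\mathbb Z_q$: substituting gives $x+(-x)=0=p\cdot 0$, which holds for every prime $p$, so the triple lies in the solution set regardless of the value of $p$.

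Next I would argue that the three coordinates $x$, $-x$, and $0$ are pairwise distinct. Since $x\neq 0$, we have $x\neq 0$ and $-x\neq 0$ immediately. The only equality that requires the standing hypothesis $q\geq 5$ (more precisely, that $q$ is odd) is $-x\neq x$: were $-x=x$, then $2x\equiv 0\bmod q$, and as $q$ is an odd prime this forces $x\equiv 0$, contradicting $x\neq 0$.

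Finally I would read off the colors from the singleton hypotheses. Because $C_1=\{0\}$ and $C_2=\{x\}$ are distinct color classes, $c(0)$ and $c(x)$ are two different colors. Since $-x$ is distinct from both $0$ and $x$ by the previous step, and since $C_1$ and $C_2$ contain only $0$ and $x$ respectively, the element $-x$ lies in neither $C_1$ nor $C_2$; hence $c(-x)$ differs from both $c(0)$ and $c(x)$. Thus $c(x)$, $c(-x)$, and $c(0)$ are pairwise distinct, so $(x,-x,0)$ is rainbow.

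There is essentially no hard step here; the only point to be careful about is invoking the oddness of $q$ to guarantee $-x\neq x$, together with the singleton structure of $C_1$ and $C_2$, which is exactly what forces $-x$ to avoid both reserved colors. This observation functions as a base case, ruling out the configuration in which the two smallest color classes are singletons containing $0$ and a nonzero element, and it feeds into the case analysis that establishes $rb(\mathbb Z_q,p)\leq 4$.
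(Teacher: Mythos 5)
Your verification is correct and is exactly the direct check the paper leaves implicit (the Observation is stated without proof): the triple satisfies $x+(-x)=0=p\cdot 0$, the oddness of $q\geq 5$ gives $x\neq -x$, and the singleton classes force $-x$ into a third color class. Nothing further is needed.
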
 

Therefore, if $c$ has two or more singleton color classes, we can assume that $\{0\}$ is not a color class. Furthermore, since dilation preserves the rainbow-free property, we can assume that if $|C_2|=1$, then $C_1=\{1\}$.

\begin{lemma}\label{1}
If $p \not\equiv -1 \mod q$ and $|C_2|=1$, then  $c$ admits a rainbow triple.
\end{lemma}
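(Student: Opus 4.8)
We are given a coloring $c$ of $\mathbb{Z}_q$ (with $q \geq 5$ prime) whose color classes are ordered $1 \leq |C_1| \leq \cdots \leq |C_r|$ with $r \geq 4$, together with the hypotheses $p \not\equiv -1 \bmod q$ and $|C_2| = 1$. By the observation preceding the lemma and the fact that dilation preserves rainbow-freeness, we may assume $C_1 = \{1\}$ (and that $\{0\}$ is not a singleton color class, so $0$ lies in some larger class). The goal is to show $c$ admits a rainbow triple for the equation $x_1 + x_2 = p x_3$.

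\textbf{Plan.} The strategy is exactly the reduction advertised in the paragraph before Lemma \ref{1}: assume for contradiction that $c$ is rainbow-free, collapse it to a rainbow-free $3$-coloring $\hat c$, and then show $\hat c$ violates every case of Theorem \ref{3ColorPrimes}. Concretely, I would merge color classes $C_2, C_3, \dots, C_r$ into a single class while keeping $C_1 = \{1\}$ as its own class and keeping (say) the class containing $0$ separate, producing an exact $3$-coloring. Since merging colors can only destroy rainbow triples, $\hat c$ is still rainbow-free. Now $\hat c$ has a singleton class $A = \{1\}$, so only Case 2 of Theorem \ref{3ColorPrimes} can apply (Case 1 forces $A = \{0\}$, and Case 3 forces $|A| \geq 2$). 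Within Case 2, subcase (ii) requires $k = p \equiv -1 \bmod q$, which is excluded by hypothesis. Hence we would be forced into subcase 2(i), which requires $p \equiv 2 \bmod q$ together with the structural conditions that $(B-1)$ and $(C-1)$ be symmetric and $\langle 2\rangle$-periodic.

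\textbf{Finishing the contradiction.} The remaining work is to rule out subcase 2(i). The natural approach is to go back to the original coloring $c$ rather than $\hat c$: because $|C_2| = 1$ as well, there is a \emph{second} singleton class, i.e. some element $x \neq 1$ with $c(x)$ a color appearing nowhere else. I would exhibit an explicit rainbow triple in $c$ using the two singletons $1$ and $x$ against a third element whose color differs from both. Since $p \equiv 2 \bmod q$ in this surviving subcase, the equation $x_1 + x_2 = p x_3$ reads $x_1 + x_2 = 2x_3$, so $3$-term arithmetic progressions are the triples; one then checks that having two distinct singletons together with at least two further colors on the remaining $q - 2 \geq 3$ elements forces an AP that reads off three distinct colors. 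Alternatively, and perhaps more cleanly, one argues that the $\langle 2\rangle$-periodicity demanded by 2(i) is incompatible with $B$ or $C$ being reduced to a single point: the symmetric $\langle 2\rangle$-periodic structure on $(B-1)$ and $(C-1)$ forces each of $B, C$ to have size divisible by $|\langle 2\rangle| \geq 2$, contradicting the existence of the singleton $C_2$ once it is tracked through the merge.

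\textbf{Main obstacle.} The delicate point is the bookkeeping in the reduction: I must choose which classes to merge so that the resulting $\hat c$ retains enough of the singleton structure to land provably in Case 2 and \emph{also} so that the final contradiction against 2(i) can be pulled back to a genuine rainbow triple in the original $c$. In particular, the theorem classifies colorings only \emph{up to dilation}, so after applying Theorem \ref{3ColorPrimes} I must undo the dilation to locate the rainbow triple in $c$; keeping track of that dilation, and verifying that the element realizing the contradiction in 2(i) is not accidentally one of the two distinguished singletons, is where the care is needed. The congruence hypotheses $p \not\equiv -1$ is what eliminates 2(ii), so the entire content of the lemma rests on showing 2(i) cannot occur when a second singleton class is present.
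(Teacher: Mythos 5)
Your proposal is correct and follows the same skeleton as the paper's proof: collapse $c$ to a $3$-coloring, invoke Theorem \ref{3ColorPrimes}, use the singleton smallest class to rule out Cases 1 and 3, use $p\not\equiv -1 \bmod q$ to rule out 2(ii), and then dispose of the surviving case $p\equiv 2\bmod q$. The differences are worth noting. First, the paper merges $\bigcup_{i\geq 3}C_i$ and keeps \emph{both} singletons $C_1,C_2$ as separate classes, whereas you keep $C_1=\{1\}$ and the class containing $0$ separate (your description of the merge is self-contradictory as written, but the intent is clear); both merges work, and your worry about undoing the dilation is a non-issue, since the only fact extracted from the classification is the dilation-invariant congruence $p\equiv 2\bmod q$. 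Second, for the endgame the paper simply cites Proposition 3.5 of \cite{BEHHKKLMSWY}, which gives $rb(\mathbb{Z}_q,2)\leq 4$, and is done since $r\geq 4$; you instead propose a direct construction, and your ``one then checks'' can be completed in one line: with $C_1=\{1\}$ and $C_2=\{x\}$, the element $2x-1$ lies outside $\{1,x\}$ (since $q$ is odd and $x\neq 1$), so $(1,\,2x-1,\,x)$ is a rainbow solution of $x_1+x_2=2x_3$ --- indeed only one further color is needed, not two. Your ``alternative'' finish via $\langle 2\rangle$-periodicity does not go through with \emph{your} choice of merge, because neither $B$ nor $C$ of your $\hat{c}$ is a singleton (the singleton $C_2$ is absorbed into the large class, and a union of color classes can be $\langle 2\rangle$-periodic even when its parts are not); that argument does work with the paper's merge, where $B=C_2=\{x\}$ and the nonempty singleton $B-1$ cannot be symmetric or $\langle 2\rangle$-periodic. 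So your primary route is sound and arguably more self-contained than the paper's citation; the alternative route requires switching to the paper's merge.
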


\begin{proof}
Consider the coloring $\hat c$ given by the color classes $ C_1, C_2, \bigcup_{i=3}^rC_i$. If $\hat{c}$ admits a rainbow triple, then $c$ also admits a rainbow triple and we are done. If $\hat{c}$ does not admit a rainbow triple, then $\hat{c}$ must conform to case 2.(i) in Theorem \ref{3ColorPrimes}. Therefore, $p\equiv 2\mod q$. In this case, triples satisfying $x_1+x_2=kx_3$ in $\mathbb Z_q$ are $3$-term arithmetic progressions. In \cite{BEHHKKLMSWY}, Proposition 3.5 establishes that $rb(\mathbb{Z}_q,2)\leq 4$. Therefore, there exists a rainbow triple under $c$.
\end{proof}

\begin{lemma}\label{2}
If $p\equiv -1\mod q$ and $|C_3|=1$, then $c$ admits a rainbow triple. 
\end{lemma}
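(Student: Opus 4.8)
The plan is to argue directly, producing a rainbow triple rather than invoking the structural classification. Since $1\le |C_1|\le|C_2|\le|C_3|=1$, all three of $C_1,C_2,C_3$ are singletons; by the observations preceding Lemma \ref{1} we may assume $0$ is not a color class and that $C_1=\{1\}$ (dilation preserves rainbow-freeness). Write $C_2=\{a\}$ and $C_3=\{b\}$, three distinct nonzero elements carrying three distinct colors. Because $p\equiv -1\bmod q$, the equation $x_1+x_2=px_3$ becomes $x_1+x_2+x_3\equiv 0\bmod q$, so a rainbow triple is exactly a set of three elements of pairwise distinct colors summing to $0$. Every element outside $\{1,a,b\}$ receives a color different from those of $1,a,b$, so the third coordinate of a triple built from two of these singletons may be any element avoiding those two singletons.

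First I would extract necessary conditions for $c$ to be rainbow-free from the three ``two-singleton'' families. For the pair $\{1,a\}$, the unique zero-sum completion is $g=-1-a$, and $(1,a,g)$ is rainbow unless $g\in\{1,a\}$; since $g=1$ forces $a=-2$ and $g=a$ forces $a=-2^{-1}$, rainbow-freeness requires $a\in\{-2,-2^{-1}\}$. Symmetrically, the pair $\{1,b\}$ forces $b\in\{-2,-2^{-1}\}$. As $a\neq b$ and the two values $-2,-2^{-1}$ are distinct for $q\geq 5$, this pins down $\{a,b\}=\{-2,-2^{-1}\}$.

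The final step treats the pair $\{a,b\}$. Its zero-sum completion is $g=-a-b=2+2^{-1}$, and a short computation shows $2+2^{-1}\notin\{-2,-2^{-1}\}=\{a,b\}$ whenever $q\geq 5$ (both coincidences reduce to $q=3$). Hence $g\neq a,b$, so $c(g)$ differs from the colors of $a$ and $b$, and $(a,b,g)$ is a rainbow triple, contradicting rainbow-freeness. Therefore $c$ admits a rainbow triple.

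The only real content is the modular bookkeeping of when a zero-sum completion accidentally lands back on one of the two chosen singletons, and the verification that $q\geq 5$ eliminates every degenerate coincidence (each collapses to $q=3$); I expect this case-checking, mostly tracking the inverses $-2$ and $-2^{-1}$, to be the main obstacle. It is worth noting where the hypothesis $|C_3|=1$ enters: the contradiction comes from the family $\{a,b,\cdot\}$, which needs two singletons other than $\{1\}$; with only two singletons the reduced coloring can genuinely be rainbow-free, for instance when $a=-2$, which is precisely why that case must be excluded here. One could equivalently merge $C_4,\dots,C_r$ together with one singleton into a single class and appeal to Theorem \ref{3ColorPrimes}, but the direct count above avoids unpacking the $k\equiv -1$ case of that classification.
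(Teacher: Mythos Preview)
Your argument is correct and takes a genuinely different route from the paper. The paper reduces to the Llano--Montejano classification (Theorem~\ref{3ColorPrimes}): it merges $C_3,\dots,C_r$ into one class, applies case~2(ii) to force $x=-2^{-1}$ (after first excluding $x=2$ via the explicit triple $(2,-3,1)$), then repeats with $C_2,C_4,\dots,C_r$ merged to get $y=-2^{-1}$, contradicting $x\neq y$. You instead exploit directly that for $p\equiv-1$ the equation is symmetric, so rainbow triples are zero-sum triples; each pair of singletons has a unique completion, and forcing that completion back onto one of the two singletons pins down $\{a,b\}=\{-2,-2^{-1}\}$, after which the third completion $2+2^{-1}$ is seen to miss both.

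What each approach buys: the paper's route is uniform with Lemmas~\ref{1}, \ref{3}, \ref{4}, all of which work by collapsing to a $3$-coloring and citing Theorem~\ref{3ColorPrimes}, so it keeps the section methodologically coherent. Your route is entirely self-contained and avoids unpacking the ``up to dilation'' ambiguity in the classification (since $|C_1|=|C_2|=1$, either singleton can play the role of $A$, which the paper glosses over). Your argument also handles $q=5$ transparently---there $-2^{-1}=2$ and $-2=2^{-1}$, so the set $\{-2,-2^{-1}\}=\{2,3\}$ and the completion $2+2^{-1}=0$ lies outside it---whereas the paper's preliminary exclusion of $x=2$ via $(2,-3,1)$ tacitly uses $-3\neq 2$, i.e.\ $q\neq 5$.
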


\begin{proof}
Let $C_2=\{x\}, C_3=\{y\}$. For the sake of contradiction, assume that $c$ is rainbow free. 

If $x=2$, then $(x,-3,1)$ is a rainbow triple. The same argument for $y$ shows that $x,y\neq 2$. 

Consider the coloring $\hat c$ given by the color classes $ C_1, C_2, \bigcup_{i=3}^rC_i$. Then by Theorem \ref{3ColorPrimes} we must have $C_2\setminus\{2\}+2^{-1}$ is symmetric and so $x+2^{-1}=-2^{-1}-x$. Solving for $x$ gives that $x=-2^{-1}$.  Considering the coloring given by $C_1, C_3, C_2\cup \bigcup_{i=4}^rC_i$ gives that $y=-2^{-1}$, which is a contradiction.
\end{proof}

\begin{lemma}\label{3}
If $p\not\equiv -1\mod q$, and $|C_2|\geq 2$, then $c$ admits a rainbow triple.
\end{lemma}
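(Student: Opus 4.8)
The plan is to argue by contradiction, reducing the hypothetical rainbow-free coloring $c$ to a three-coloring and invoking the classification in Theorem \ref{3ColorPrimes}. Suppose $c$ is rainbow-free. The key observation is that under the hypothesis $|C_2| \geq 2$, at most one color class of $c$ is a singleton, namely $C_1$; every class $C_2, \dots, C_r$ has size at least $2$. Using that $r \geq 4$, I would merge the classes into exactly three groups, each of size at least $2$: if $|C_1| \geq 2$, take $C_1$, $C_2$, and $\bigcup_{i=3}^r C_i$; if $|C_1| = 1$, instead take $C_1 \cup C_2$, $C_3$, and $\bigcup_{i=4}^r C_i$, which is well-defined and nonempty since $r \geq 4$. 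In either case all three resulting color classes contain at least two elements.

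Since merging color classes can only destroy rainbow triples, the resulting three-coloring $\hat{c}$ is again rainbow-free, by the same concatenation argument used in the proof of Theorem \ref{SchurPrimes4}. Now I would apply Theorem \ref{3ColorPrimes} to $\hat{c}$. Because $p \not\equiv -1 \mod q$, cases 2.(ii) and 3 of that theorem are excluded outright, leaving only case 1 (with $A = \{0\}$) and case 2.(i) (with $A = \{1\}$). Both surviving cases force the smallest color class $A$ to be a singleton, and since dilation preserves the cardinalities of color classes, this remains true after accounting for the ``up to dilation'' clause. But by construction the smallest class of $\hat{c}$ has size at least $2$, so $\hat{c}$ fits none of the admissible cases. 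This contradicts the fact that $\hat{c}$ is rainbow-free, forcing $c$ to admit a rainbow triple.

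The argument is essentially bookkeeping once the classification is in hand; the only point requiring care is verifying that a valid three-part merge with all parts of size $\geq 2$ always exists, which is precisely where the hypotheses $|C_2| \geq 2$ and $r \geq 4$ are jointly used. I expect the main (minor) obstacle to be confirming that the cases excluded by $p \not\equiv -1 \mod q$ are exactly those requiring $k \equiv -1$, and that the two remaining cases genuinely pin down a singleton minimal class even under dilation.
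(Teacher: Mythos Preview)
Your proposal is correct and follows essentially the same approach as the paper: merge the $r\geq 4$ color classes into three classes each of size at least $2$, observe the result is still rainbow-free, and derive a contradiction with Theorem~\ref{3ColorPrimes}. The paper simply uses the single merge $C_1\cup C_2,\ C_3,\ \bigcup_{i=4}^r C_i$ (which already has all parts of size $\geq 2$ regardless of $|C_1|$, so your case split on $|C_1|$ is unnecessary), and it is terser about why Theorem~\ref{3ColorPrimes} yields the contradiction.
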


\begin{proof}
 For the sake of contradiction, suppose that $c$ does not admit a rainbow triple. Consider the coloring $\hat c$ given by $C_1\cup C_2, C_3, \bigcup_{i=4}^r C_i$. Since $|C_3| \geq |C_2| \geq 2$, notice that $\hat c$ does not have a singleton color class and is rainbow-free. This contradicts Theorem \ref{3ColorPrimes}.
\end{proof}

\begin{lemma}\label{4}
If $p\equiv -1 \mod q$ and $|C_3|\geq 2$, then $c$ admits a rainbow triple.
\end{lemma}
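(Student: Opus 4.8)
The plan is to exploit the fact that since $p\equiv -1 \pmod q$, the defining equation becomes $x_1+x_2+x_3\equiv 0 \pmod q$, which is \emph{symmetric} in the three variables. Hence a rainbow triple is exactly a set of three distinctly-colored elements summing to $0$, and producing one such set suffices. Assuming for contradiction that $c$ is rainbow-free with $r\geq 4$ and $|C_3|\geq 2$ (so that $C_3,\dots,C_r$ are all \emph{large}, of size at least $2$), I would split on how many of $C_1,C_2$ are singletons, i.e.\ on $|C_2|$, and in each branch either merge classes to land in case 3 of Theorem \ref{3ColorPrimes} or argue directly with the symmetric equation.

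When $|C_1|=|C_2|=1$, the paper's normalization lets us assume $C_1=\{1\}$ with $0$ not a singleton; writing $C_2=\{b\}$, the triple $\{1,b,-1-b\}$ is rainbow unless $-1-b\in\{1,b\}$, forcing $b\in\{-2,-2^{-1}\}$, and since these two values are exchanged by a dilation we may take $C_2=\{-2\}$. Now the three-colored triples $\{1,u,v\}$ show that the involution $\sigma(x)=-1-x$ preserves the partition of $\mathbb Z_q\setminus\{1,-2\}$ into the large classes, and the triples $\{-2,u,v\}$ give the same for $\rho(x)=2-x$. Their composition $\sigma\rho$ is translation by $-3$, which has full order in $\mathbb Z_q$; since $-2=\sigma\rho(1)$, the two excluded points $\{1,-2\}$ are adjacent on this translation cycle, so $\mathbb Z_q\setminus\{1,-2\}$ is a single path along which the color is constant. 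Thus all of $C_3,\dots,C_r$ receive one color, contradicting the presence of at least two large classes.

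When $|C_2|\geq 2$ every class has size $\geq 2$, and I would merge pairs of classes to obtain no-singleton $3$-colorings, each of which must satisfy case 3 of Theorem \ref{3ColorPrimes}. Because $q$ is odd the classes cannot all have size exactly $2$, so some class has size $\geq 3$; isolating such a class in a merging pins the common difference to $\pm 1$, so that a single dilation makes all the relevant merged classes genuine arcs. Then every class is an arc, and isolating the largest class against each of the others forces it to be adjacent to all $r-1\geq 3$ remaining classes, impossible for an arc (which has only two neighbors). The one-singleton case $|C_1|=1<|C_2|$ reduces to this: for $r\geq 5$ merge $C_1$ into $C_2$ to recover a no-singleton $3$-coloring with a size-$\geq 3$ class, and for $r=4$ with $q>7$ the largest class again has size $\geq 3$, whence the same arc argument applied to the sets $C_1\cup C_j$ makes the singleton adjacent to all three large classes; only the base configuration $q=7$ with sizes $(1,2,2,2)$ is left, a direct finite check in which $\sigma$ forces the pairing and an explicit cross-class sum-zero triple is rainbow.

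The main obstacle is the clause \emph{up to dilation} in case 3 of Theorem \ref{3ColorPrimes}: the three classes are only arithmetic progressions with \emph{some} common difference, not literal intervals, and different mergings a priori yield different differences, so the clean ``arc-adjacency'' counting is not immediately licensed. The device that resolves this is a class of size at least $3$, whose difference is unique up to sign and hence aligns all the mergings to one dilation; the genuinely delicate situation is therefore a lone singleton coexisting with only size-$2$ classes, which is precisely why the argument bottoms out at the finite case $q=7$.
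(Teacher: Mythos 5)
Your decomposition into two-singleton, no-singleton, and one-singleton branches is a genuinely different organization from the paper's (which splits only on $|C_2|=1$ versus $|C_2|\geq 2$ with $C_1$ a singleton), and two of your three branches are sound. The composed-involution argument in the two-singleton case ($\sigma\rho$ is translation by $-3$, which forces $\mathbb Z_q\setminus\{1,-2\}$ to be monochromatic along a single path, contradicting $r\geq 4$) is correct and replaces the paper's use of the endpoint condition $a_1+a_2+a_3\in\{1,2\}$ from Theorem \ref{3ColorPrimes}; the arc-adjacency count in the no-singleton case (a class of size $\geq 3$ would have to be adjacent to $r-1\geq 3$ pairwise disjoint arcs but has only two boundary neighbours) is also correct, and it covers a configuration the paper's proof does not explicitly address. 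The $r\geq 5$ one-singleton reduction and the finite check at $q=7$ with sizes $(1,2,2,2)$ also go through.

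The gap is in the one-singleton branch with $r=4$ and $q>7$, specifically for class sizes $(1,2,2,q-5)$. Your claim that the size-$\geq 3$ class $C_4$ ``aligns all the mergings to one dilation'' fails for exactly the merging you need, namely $\{C_1\cup C_4, C_2, C_3\}$: there $C_4$ is not an isolated part, $C_2$ and $C_3$ have size $2$ (hence are arithmetic progressions for \emph{every} difference), and the only part of size $\geq 3$ is $C_1\cup C_4$ itself, whose common difference is precisely what is unknown. So you cannot conclude that $C_1\cup C_4$ is an arc in the coordinates pinned by $C_4$, and the singleton is never shown adjacent to $C_4$. Indeed the two mergings that \emph{are} pinned, $\{C_4,C_1\cup C_2,C_3\}$ and $\{C_4,C_1\cup C_3,C_2\}$, force the two neighbours of the arc $C_4$ to lie one in $C_2$ and one in $C_3$, so the singleton sits between $C_2$ and $C_3$ and the claimed ``adjacent to all three large classes'' contradiction never materializes. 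The configuration can still be eliminated --- for instance, the complement $C_2\cup C_3=\{v+1,v+2,v+4,v+5\}$ of $C_1\cup C_4$ would have to be a $4$-term arithmetic progression, yet its multiset of pairwise differences is $\{1,1,2,3,3,4\}$, which contains no value of multiplicity three as an AP's difference multiset $\{e,e,e,2e,2e,3e\}$ must --- but this extra step is missing from your sketch, and it is needed for every prime $q\geq 11$, not only in the finite case $q=7$ that you isolate.
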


\begin{proof}
For the sake of contradiction, suppose that $c$ does not admit a rainbow triple. There are two cases: $|C_2|\geq 2$, or $|C_2|=1$. 

\textbf{Case 1:} Assume that $|C_2|\geq 2$ and $C_1=\{x\}$. By Theorem \ref{3ColorPrimes}, the coloring $C_1\cup C_2, C_3,\bigcup_{i=4}^rC_i$ is of the form $$C_1\cup C_2=[a_1,a_2-1],$$$$C_3=[a_2,a_3-1],$$$$\bigcup_{i=4}^rC_i=[a_3,a_1-1].$$ 

$x$ is not adjacent to at least one of $C_3$ or $\bigcup_{i=4}^rC_i$. Without loss of generality, assume $x$ is not adjacent to $C_3$ (the other case follows the same argument). Consider the coloring $\hat c$ given by $C_2, C_1\cup C_3, \bigcup_{i=4}^rC_i$. Notice that $\hat c$ can only be dilated by $1$ or $-1$ to preserve the interval structure of $\bigcup_{i=4}^rC_i$. However, dilating by $1$ or $-1$ will not make $C_1\cup C_3$ an arithmetic progression with difference $1$. This is a contradiction. 

\textbf{Case 2:} Assume that $|C_2|=1$. Consider the coloring $\hat c$ given by $C_1\cup C_2, C_3,\bigcup_{i=4}^rC_i$. By Theorem \ref{3ColorPrimes}, $\hat c$ is of the form $$C_1\cup C_2=[a_1,a_2-1],$$$$C_3=[a_2,a_3-1],$$$$\bigcup_{i=4}^rC_i=[a_3,a_1-1]$$ with $a_1+a_2+a_3\in \{1,2\}$. Since every set is an arithmetic progression with difference $1$, $a_2-1=a_1+1$. This implies that $a_3\in\{-2a_1-1, -2a_1\}$.  This implies that $c(-2a_1-1)\neq c(a_1), c(a_1+1)$. Therefore, triple $(-2a_1-1, a_1, a_1+1)$ is rainbow, which is a contradiction.
\end{proof}

\begin{proof}[Proof of Theorem \ref{thmpneqq}]
By Lemmas \ref{1}, \ref{2}, \ref{3}, and \ref{4}, we know that $rb(\mathbb{Z}_q,p)\leq 4$. Therefore, it suffices to show that there exists a rainbow-free $3$-coloring of $\mathbb Z_q$ if and only if $p,q$ do not satisfy either condition 1 or 2. First we will prove that if there exists a rainbow-free $3$-coloring, then $p,q$ do not satisfy conditions 1 and 2.

Let $c$ be a rainbow-free $3$-coloring. There are two cases,  $p\not\equiv -1\mod q$ or $p\equiv -1\mod q$. 

\textbf{Case 1:} By Theorem \ref{3ColorPrimes}, either $0$ is uniquely colored, or $p\equiv 2 \mod q$.

Suppose $0$ is uniquely colored and $c(1)=R$. Notice that if $c(x)=R$, then $c(px)=R$ and $c(-x)=R$. If $p,q$ satisfy either 1 or 2, then $\{p^i,-p^i\mid i\in \mathbb Z\}= \mathbb Z_q^*$, which contradicts the fact that $c$ is a $3$-coloring. 

Suppose $p\equiv 2\mod q$. Then neither 1 nor 2 are satisfied by Theorem 3.5 in \cite{JLMNR}.

\textbf{Case 2:} Suppose $p\equiv -1\mod q$. Then $|p|=2$. If $(q-1)/2$ is odd, then $(q-1)/2\neq 2$. Therefore, neither 1 nor 2 are satisfied. 

To prove the reverse direction, suppose that $p,q$ do not satisfy either 1 or 2. Let $c$ be given by $$C_1=\{0\},C_2=\{p^i, -p^i\mid i\in \mathbb Z\}, C_3=\mathbb Z_q^*\setminus C_2.$$ Since $p,q$ do not satisfy either 1 or 2, $C_3$ is non-empty. Notice that any rainbow triple must contain $0$ and some element $y\in C_2$. However, if $0,y, z$ is a triple, then $z\in C_2$. Therefore, $c$ is rainbow-free.

\end{proof}

The following corollary is used in Section \ref{SectionGenKLowerBound} to prove a general lower bound for $rb(\mathbb Z_n,p)$.

\begin{cor}\label{uniquecolorcor}
There exists a maximum rainbow-free coloring of $\mathbb{Z}_q$ where $0$ is uniquely colored and the color classes are symmetric.
\end{cor}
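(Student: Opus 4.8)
The plan is to produce explicit maximum rainbow-free colorings with the two desired properties, reusing the constructions already built in the proof of Theorem \ref{thmpneqq}. I would split into the two cases given by that theorem, namely $rb(\mathbb{Z}_q,p)=4$ and $rb(\mathbb{Z}_q,p)=3$. Since a maximum coloring uses exactly $rb(\mathbb{Z}_q,p)-1$ colors, the first case calls for a $3$-coloring and the second for a $2$-coloring.

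For the case $rb(\mathbb{Z}_q,p)=4$ (equivalently, $p,q$ satisfy neither condition 1 nor 2), I would take the $3$-coloring
$$C_1=\{0\},\quad C_2=\{p^i,-p^i\mid i\in\mathbb Z\},\quad C_3=\mathbb Z_q^*\setminus C_2$$
exhibited in the proof of Theorem \ref{thmpneqq}. That proof already establishes that this coloring is rainbow-free and that $C_3$ is nonempty, so it is indeed a maximum coloring, and $0$ is clearly colored uniquely. The only remaining point is symmetry of the classes: $C_1=\{0\}$ is trivially symmetric, $C_2$ is symmetric by construction since $p^i\in C_2$ forces $-p^i\in C_2$, and because both $\mathbb Z_q^*$ and $C_2$ equal their own negations, so does their difference $C_3=\mathbb Z_q^*\setminus C_2$.

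For the case $rb(\mathbb{Z}_q,p)=3$, I would use the trivial $2$-coloring $C_1=\{0\}$ and $C_2=\mathbb Z_q^*$. This is vacuously rainbow-free, as a rainbow triple requires three distinct colors; it uses the maximum of two colors; $0$ is uniquely colored; and both classes are symmetric since $-0=0$ and $x\in\mathbb Z_q^*$ if and only if $-x\in\mathbb Z_q^*$.

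There is no serious obstacle here: the corollary amounts to observing that the colorings already produced in the proof of Theorem \ref{thmpneqq} carry these structural features. The single point requiring any attention is the symmetry of the complement class $C_3$ in the $3$-coloring case, which I expect to handle in one line via the symmetry of $C_2$ and of $\mathbb Z_q^*$.
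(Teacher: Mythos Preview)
Your proposal is correct and matches the paper's approach: the corollary is stated without proof in the paper precisely because it follows immediately from the explicit coloring constructed at the end of the proof of Theorem~\ref{thmpneqq} (your first case) together with the trivial $2$-coloring when $rb(\mathbb Z_q,p)=3$ (your second case). Your one-line verification that $C_3$ is symmetric is exactly the detail the paper leaves to the reader.
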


\subsection{Exact values for $rb(\mathbb{Z}_{p^\alpha},p)$, $p$ prime}
\label{primepowers}
In order to determine the rainbow numbers for equations of the form $x_1+x_2=px_3$ for prime $p\geq 3$ we still need to determine $rb(\mathbb Z_{p^\alpha},p)$ for $\alpha\geq 1$. We will prove Theorem \ref{prime} using induction. Observation \ref{p=3} and Propositions \ref{K=P}, \ref{primelb3}, and \ref{primelb5}  provide the lower bound and base case for our induction argument. Lemmas \ref{samecolors} and \ref{zeromono} provide the basic structure of a rainbow-free coloring of $\mathbb{Z}_{p^\alpha}$. Lastly, Lemmas \ref{case12}, and \ref{atleast2} exploit the structure to derive a contradiction by forcing a rainbow triple. Throughout this section, for $0\leq k\leq p-1$, recall that the $k^{th}$ residue class mod $p$ is the set $R_k = \{j \in \mathbb{Z}_{p^\alpha}: j\equiv k \mod p\}$ and that the $k^{th}$ residue palette $P_k$ is the set of colors which appear on $R_k$.

\begin{obs}\label{p=3}
Notice $rb(\mathbb Z_3,3)=3$ and $rb(\mathbb Z_9,3)=4$.
\end{obs}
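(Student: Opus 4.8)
The plan is to verify both values by direct computation, since $\mathbb{Z}_3$ and $\mathbb{Z}_9$ are small enough to analyze by hand or exhaustive case-checking. For $rb(\mathbb{Z}_3,3)$, note that the equation $x_1+x_2=3x_3$ reduces modulo $3$ to $x_1+x_2\equiv 0 \mod 3$, since $3x_3\equiv 0$ for every $x_3\in\mathbb{Z}_3$. Thus a triple $(x_1,x_2,x_3)$ is a solution precisely when $x_1+x_2\equiv 0\mod 3$, with $x_3$ arbitrary. First I would observe that any rainbow triple needs three distinct colors on $x_1,x_2,x_3$; with only three elements $\{0,1,2\}$ available, I would show that any surjective $2$-coloring is rainbow-free trivially (only two colors appear, so no triple can be rainbow), giving $rb(\mathbb{Z}_3,3)\geq 3$, and then that the unique (up to relabeling) exact $3$-coloring assigns distinct colors to $0,1,2$ and does admit a rainbow solution — for instance checking that $x_1=1,x_2=2,x_3=0$ satisfies $1+2=3\cdot 0$ in $\mathbb{Z}_3$ and uses all three colors. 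This forces $rb(\mathbb{Z}_3,3)=3$.

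For $rb(\mathbb{Z}_9,3)$, the strategy is to exhibit a rainbow-free exact $3$-coloring (establishing $rb(\mathbb{Z}_9,3)\geq 4$) and then show every exact $4$-coloring admits a rainbow solution (establishing $rb(\mathbb{Z}_9,3)\leq 4$). For the lower bound, I would use the residue structure modulo $3$: the equation $x_1+x_2=3x_3$ in $\mathbb{Z}_9$ constrains things through divisibility by $3$, so I would try a coloring built from the residue classes $R_0,R_1,R_2$ modulo $3$, coloring elements according to their residue or a symmetric refinement thereof, and verify no solution to $x_1+x_2=3x_3$ becomes rainbow. One natural candidate is to color the multiples of $3$ (namely $\{0,3,6\}$) with a structure mimicking the $\mathbb{Z}_3$ case and assign the two nonzero residue classes as in the lower-bound constructions of Section~\ref{SectionSchurLowerBound}. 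For the upper bound, I would either appeal to a general reduction lemma or do a finite case analysis: given four color classes, partition or examine them to force a solution with three distinct colors.

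The main obstacle I anticipate is the upper bound for $rb(\mathbb{Z}_9,3)$, i.e.\ showing that \emph{every} exact $4$-coloring of the nine elements admits a rainbow solution. A brute-force over all surjective $4$-colorings of a $9$-element set is large, so I would instead want a structural argument: analyze how the divisibility constraint from Lemma~\ref{SchurPrimeDivisibility} (suitably adapted to $k=p=3$) partitions solutions according to which coordinates are divisible by $3$, and use the fact that within the multiples of $3$ the equation behaves like a scaled copy of the $\mathbb{Z}_3$ problem. Because this observation is stated as a base case feeding into the inductive proof of Theorem~\ref{prime}, I expect the intended justification is simply ``by inspection'' — a direct finite verification — rather than a deep argument, so the cleanest route is to record the explicit rainbow-free $3$-coloring of $\mathbb{Z}_9$ witnessing $rb(\mathbb{Z}_9,3)\geq 4$ and then confirm the bound $rb(\mathbb{Z}_9,3)\leq 4$ by the general structural lemmas (or a short exhaustive check) that the later propositions in this subsection will in any case establish.
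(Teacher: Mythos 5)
The paper offers no proof of Observation \ref{p=3}; it is presented as a direct finite verification, and your plan is in that spirit. Your treatment of $rb(\mathbb{Z}_3,3)=3$ is complete and correct: every solution has $x_1+x_2\equiv 0\bmod 3$ with $x_3$ free, a $2$-coloring is trivially rainbow-free, and $(1,2,0)$ is rainbow under the unique exact $3$-coloring.

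For $\mathbb{Z}_9$ there is a concrete gap in the witness you propose for the lower bound. Coloring by the residue classes $R_0,R_1,R_2$ modulo $3$ is \emph{not} rainbow-free: $(1,8,0)$ satisfies $1+8=9\equiv 0=3\cdot 0$ in $\mathbb{Z}_9$ and meets all three classes. The construction of Proposition \ref{primelb5} degenerates for $p=3$ (the classes $R_i$ and $R_{p-i}$ merge, leaving only two colors), and the constructions of Section \ref{SectionSchurLowerBound} are for the Schur equation, not $x_1+x_2=3x_3$; so none of the candidates you name produces an exact rainbow-free $3$-coloring. One does exist, but it must be found separately, e.g.\ $C_1=\{0,3,6\}$, $C_2=\{1,4,5,8\}$, $C_3=\{2,7\}$: every solution has $x_1+x_2\in\{0,3,6\}$, and checking the pairs summing to $0$, $3$, and $6$ against the corresponding sets of admissible $x_3$ (namely $R_0$, $R_1$, $R_2$) shows no solution is rainbow. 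Separately, your fallback of deriving the upper bound $rb(\mathbb{Z}_9,3)\leq 4$ from ``the general structural lemmas later in the subsection'' is circular: Observation \ref{p=3} is precisely the base case of the induction proving Theorem \ref{prime} for $p=3$, and Lemma \ref{atleast2} is stated only for $p\geq 5$. The upper bound genuinely requires the direct finite check you mention as the alternative.
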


\begin{prop}\label{K=P}
Let $p\geq 3$ be prime. Then $rb(\mathbb Z_p,p)=\frac{p+1}{2}+1.$
\end{prop}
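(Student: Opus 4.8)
The plan is to observe that in $\mathbb{Z}_p$ the equation $x_1+x_2=px_3$ collapses, since $p\equiv 0\pmod p$; thus the triples are exactly those $(x_1,x_2,x_3)$ with $x_1+x_2\equiv 0$, i.e.\ $x_2=-x_1$, with $x_3$ completely free. Consequently a triple $(a,-a,b)$ is rainbow precisely when $c(a)$, $c(-a)$, and $c(b)$ are pairwise distinct. This reduces the entire problem to understanding the symmetry of the coloring.

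The first step I would establish is a dichotomy: a coloring $c$ using at least three colors is rainbow-free if and only if it is \emph{symmetric}, meaning $c(x)=c(-x)$ for all $x$. For one direction, if $c$ is symmetric then every triple $(a,-a,b)$ has $c(a)=c(-a)$ and cannot be rainbow. For the converse, suppose $c$ is not symmetric, so some $a$ (necessarily $a\neq 0$, since $0$ is the unique fixed point of $x\mapsto -x$ in $\mathbb{Z}_p$) has $c(a)\neq c(-a)$; since at least three colors are used, some $b$ has $c(b)\notin\{c(a),c(-a)\}$, and then $(a,-a,b)$ is rainbow.

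Next I would count the maximum number of colors in a symmetric coloring. The involution $x\mapsto -x$ fixes $0$ and partitions $\mathbb{Z}_p\setminus\{0\}$ into $(p-1)/2$ antipodal pairs $\{x,-x\}$. A symmetric coloring is constant on each pair, so it uses at most $1+(p-1)/2=(p+1)/2$ colors, with equality achieved by giving $0$ its own color and each pair a distinct color. This produces an explicit rainbow-free coloring with $(p+1)/2$ colors, establishing the lower bound $rb(\mathbb{Z}_p,p)\geq \frac{p+1}{2}+1$.

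Finally, for the upper bound, any coloring with $\frac{p+1}{2}+1$ colors cannot be symmetric (a symmetric coloring uses at most $(p+1)/2$ colors) and uses at least three colors, so by the dichotomy it admits a rainbow triple; hence $rb(\mathbb{Z}_p,p)\leq \frac{p+1}{2}+1$, and combining with the lower bound gives equality. I expect no serious obstacle here: the only real content is the initial reduction $px_3\equiv 0$, after which everything is elementary counting of antipodal pairs. The points requiring care are the degenerate element $0$ and the hypothesis $r\geq 3$ in the dichotomy, both of which are easily checked for $p\geq 3$.
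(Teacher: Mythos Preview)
Your proposal is correct and follows essentially the same approach as the paper: both arguments observe that $px_3\equiv 0$ so triples are exactly $(a,-a,b)$, exhibit a symmetric $(p+1)/2$-coloring for the lower bound, and for the upper bound use pigeonhole to find $a$ with $c(a)\neq c(-a)$ and a third color on some $b$ to produce a rainbow triple. Your explicit formulation of the ``rainbow-free $\Leftrightarrow$ symmetric'' dichotomy is a slightly cleaner packaging of the same idea.
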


\begin{proof}
To prove the lower bound, consider the following coloring:

$$c(x)=\begin{cases} x & 0\leq x\leq \frac{p+1}{2}\\
-x& \text{otherwise}\end{cases}.$$
Notice that $c(x)=c(-x)$ for all $x\in \mathbb Z_p$. Furthermore, if $(x_1,x_2,x_3)$ is a triple, then $x_1=-x_2$. Thus, $c$ is a rainbow-free $\frac{p+1}{2}$ coloring, and $rb(\mathbb{Z}_p,p)>\frac{p+1}{2}$. 

To prove the upper bound, assume that $c$ is an $\frac{p+1}{2}+1$ coloring of $\mathbb Z_p$. By the pigeonhole principle, there exists $x\in \mathbb{ Z}_p$ such that $x\neq 0$ and $c(x)\neq c(-x)$. Since $p\geq 3$, $x\neq -x$, and there exist $y\neq x,-x$ such that $c(y)\neq c(x),c(-x)$. Therefore, $(x,-x,y)$ is a rainbow-triple, and $rb(\mathbb Z_p,p)\leq \frac{p+1}{2}+1$.
\end{proof}

For the rest of the section, we will assume that $\alpha\geq 2$. 

\begin{prop}\label{primelb3}
For $\alpha\geq 2$, 
$$rb(\mathbb{Z}_{3^\alpha},3) > 3.$$
\end{prop}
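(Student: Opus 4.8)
The plan is to prove this lower bound by exhibiting an explicit rainbow-free exact $3$-coloring of $\mathbb{Z}_{3^\alpha}$; the existence of such a coloring immediately yields $rb(\mathbb{Z}_{3^\alpha},3) > 3$. Since $\alpha \geq 2$, residues modulo $9$ are well defined in $\mathbb{Z}_{3^\alpha}$, and I would base the coloring on residues modulo $3$ and modulo $9$. Concretely, let $C_1$ be the set of all multiples of $3$, and partition the non-multiples of $3$ into $C_2$ and $C_3$ according to the unordered pair $\{x \bmod 9, -x \bmod 9\}$: for instance, put $x$ in $C_2$ when $x \equiv \pm 1 \pmod 9$ and in $C_3$ when $x \equiv \pm 2$ or $\pm 4 \pmod 9$. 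Together these residues exhaust all non-multiples of $3$, all three classes are nonempty (as $0 \in C_1$, $1 \in C_2$, $2 \in C_3$), so this is an exact $3$-coloring, and by construction $c(x) = c(-x)$.

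The core of the argument is a case analysis on the residues of a triple modulo $3$. If $(x_1, x_2, x_3)$ satisfies $x_1 + x_2 = 3x_3$, then $x_1 + x_2 \equiv 0 \pmod 3$, so $\{x_1 \bmod 3,\, x_2 \bmod 3\}$ is either $\{0,0\}$ or $\{1,2\}$. I would first dispatch the easy cases. If both $x_1, x_2 \equiv 0 \pmod 3$, then $c(x_1) = c(x_2) = C_1$. If $x_1, x_2$ are non-multiples of $3$ but $x_3$ is also a non-multiple, then all three of $c(x_1), c(x_2), c(x_3)$ lie in $\{C_2, C_3\}$, so the triple uses at most two colors. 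Neither case can be rainbow.

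The one remaining — and decisive — case is $x_1 \equiv 1$, $x_2 \equiv 2 \pmod 3$ with $x_3 \equiv 0 \pmod 3$, where $c(x_3) = C_1$ and so a rainbow triple would force $c(x_1) \neq c(x_2)$. Here I would use that $\alpha \geq 2$: reducing $x_1 + x_2 = 3x_3$ modulo $9$ and using $3x_3 \equiv 0 \pmod 9$ (valid since $x_3 \equiv 0 \pmod 3$) gives $x_2 \equiv -x_1 \pmod 9$. Hence $\{x_1 \bmod 9,\, -x_1 \bmod 9\} = \{x_2 \bmod 9,\, -x_2 \bmod 9\}$, so $x_1$ and $x_2$ lie in the same group and receive the same color, and the triple is not rainbow. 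This mod-$9$ refinement is the heart of the proof and the main point to get right: it is exactly what fails for the naive ``color by residue mod $3$'' coloring, which makes $(1,8,3)$ a rainbow triple in $\mathbb{Z}_9$, and pairing $x$ with $-x$ modulo $9$ is precisely what repairs it. Assembling the three cases shows $c$ is rainbow-free, completing the lower bound.
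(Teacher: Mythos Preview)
Your proof is correct and takes essentially the same approach as the paper: both define a coloring of $\mathbb{Z}_{3^\alpha}$ that depends only on residues modulo $9$ and use that any triple in $\mathbb{Z}_{3^\alpha}$ reduces to a triple in $\mathbb{Z}_9$. The paper's version is terser---it simply invokes Observation~\ref{p=3} (that $rb(\mathbb{Z}_9,3)=4$) to get \emph{some} rainbow-free $3$-coloring $\bar c$ of $\mathbb{Z}_9$ and sets $c(i)=\bar c(i\bmod 9)$---whereas you write down an explicit such $\bar c$ and verify the rainbow-free property by a direct mod-$3$/mod-$9$ case analysis, making your argument self-contained.
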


\begin{proof}
Suppose that $\alpha\geq 3$ and $\bar c$ is a rainbow-free $3$-coloring of $\mathbb Z_9$. Let $c$ be a $3$-coloring of $\mathbb Z_{p^\alpha}$ given by $c(i):=\bar c(i\mod 9)$. Assume that $x_1,x_2,x_3$ is a triple in $\mathbb Z_{3^\alpha}$. Then $x_1,x_2,x_3$ is a triple in $\mathbb Z_9$ and cannot be rainbow. 
\end{proof}

\begin{prop}\label{primelb5}
For prime $p\geq 5$ and $\alpha\geq 1$, 
$$ rb(\mathbb{Z}_{p^\alpha},p)\geq
\frac{p+1}{2}+1.$$
\end{prop}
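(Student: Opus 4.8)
The plan is to prove this lower bound by exhibiting an explicit rainbow-free coloring of $\mathbb{Z}_{p^\alpha}$ that uses exactly $\frac{p+1}{2}$ colors; since the rainbow number is the least number of colors that forces a rainbow triple, producing such a coloring shows $rb(\mathbb{Z}_{p^\alpha},p) > \frac{p+1}{2}$, which is precisely the claim. The coloring I would use is a pullback of the extremal coloring of $\mathbb{Z}_p$ furnished by Proposition \ref{K=P}, following the lifting idea already used in Proposition \ref{primelb3}. Concretely, let $\bar c$ be the symmetric rainbow-free $\frac{p+1}{2}$-coloring of $\mathbb{Z}_p$ that gives $\{0\}$ one color and each antipodal pair $\{x,-x\}$ its own color, so that $\bar c(y)=\bar c(-y)$ for every $y\in\mathbb{Z}_p$. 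This uses $1+\frac{p-1}{2}=\frac{p+1}{2}$ colors. I then define $c\colon\mathbb{Z}_{p^\alpha}\to\mathbb{Z}_p$ by $c(i):=\bar c(i\bmod p)$.

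First I would verify that $c$ is an exact $\frac{p+1}{2}$-coloring. Every residue class modulo $p$ is nonempty in $\mathbb{Z}_{p^\alpha}$ (each contains $p^{\alpha-1}$ elements), so $c$ is surjective onto the color set of $\bar c$, and hence uses exactly $\frac{p+1}{2}$ colors.

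The key step is to show that $c$ is rainbow-free. Let $(x_1,x_2,x_3)$ be any triple, so that $x_1+x_2\equiv px_3\pmod{p^\alpha}$. Reducing this congruence modulo $p$, and using $\alpha\geq 1$ so that $px_3\equiv 0\pmod p$, yields $x_1+x_2\equiv 0\pmod p$, that is $x_1\equiv -x_2\pmod p$. Consequently,
$$c(x_1)=\bar c(x_1\bmod p)=\bar c\bigl(-(x_2\bmod p)\bigr)=\bar c(x_2\bmod p)=c(x_2),$$
where the middle equality is exactly the symmetry of $\bar c$. Thus every triple has two coordinates of equal color and cannot be rainbow, so $c$ is rainbow-free and the bound $rb(\mathbb{Z}_{p^\alpha},p)\geq\frac{p+1}{2}+1$ follows. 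Note this argument is uniform in $\alpha\geq 1$; for $\alpha=1$ it specializes to the lower bound already contained in Proposition \ref{K=P}.

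I do not expect a serious obstacle in this argument. The only points requiring care are confirming that reducing the defining congruence modulo $p$ genuinely annihilates both $px_3$ and the ambient multiple of $p^\alpha$ (this is where $\alpha\geq 1$ enters), and ensuring the base coloring $\bar c$ is chosen to be symmetric so that the forced relation $x_1\equiv -x_2\pmod p$ collapses the colors of $x_1$ and $x_2$. This is the same mechanism that renders the $\mathbb{Z}_p$ coloring of Proposition \ref{K=P} rainbow-free, now transported to $\mathbb{Z}_{p^\alpha}$ through reduction modulo $p$.
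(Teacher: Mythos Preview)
Your proposal is correct and is essentially the same argument as the paper's: both color $\mathbb{Z}_{p^\alpha}$ by giving each pair of residue classes $R_i, R_{p-i}$ modulo $p$ a single color, then use the observation that $x_1+x_2\equiv px_3\pmod{p^\alpha}$ forces $x_1\equiv -x_2\pmod p$ so that $c(x_1)=c(x_2)$. Your framing as a pullback of the extremal coloring from Proposition~\ref{K=P} via reduction modulo $p$ is a clean way to say exactly what the paper does directly.
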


\begin{proof}
Color all of $R_i, R_{p-i}$ color $i$ for $0\leq i \leq \frac{p+1}{2}$. Suppose $x_1+x_2 = px_3$ and $x_1\equiv j\mod p$ for $0\leq j\leq p-1$. Then $x_2\equiv p-j\mod p$, and $x_1,x_2,x_2$ is not rainbow. 
\end{proof}

\begin{lemma}\label{samecolors}
If $c$ does not admit a rainbow triple, then 
$$P_i=P_{p-i}$$ when $0<i<p$. 
\end{lemma}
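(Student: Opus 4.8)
Suppose toward a contradiction that $P_i\neq P_{p-i}$ for some $0<i<p$. After relabelling (replacing $i$ by $p-i$ if necessary) I may assume there is a color $A\in P_i\setminus P_{p-i}$, witnessed by some $a\in R_i$ with $c(a)=A$; since $A\notin P_{p-i}$, no element of $R_{p-i}$ receives color $A$. The plan hinges on the observation that for the equation $x_1+x_2=px_3$ the third coordinate is essentially free: for \emph{every} $x\in\mathbb{Z}_{p^\alpha}$ the triple $(a,\,px-a,\,x)$ is a solution, because $a+(px-a)=px=p\cdot x$. Moreover $px-a\equiv -a\equiv p-i \pmod p$, so $px-a\in R_{p-i}$ and hence $c(px-a)\neq A$. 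Rainbow-freeness then forces this triple to be non-rainbow, and since its first two colors $A$ and $c(px-a)$ already differ, the coincidence must involve the third slot. Thus for every $x$ I obtain the dichotomy: either $c(x)=A$ or $c(x)=c(px-a)$.

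Next I would organize these constraints through the affine map $g(x):=px-a$. Since $\gcd(p-1,p)=1$, the element $p-1$ is invertible, so $g$ has a unique fixed point $w:=a\,(p-1)^{-1}$, and a direct check gives $g(x)-w=p(x-w)$, whence $g^k(x)-w=p^k(x-w)$ and therefore $g^\alpha(x)=w$ for all $x$. The crucial feature is the residue of the fixed point: modulo $p$ we have $w\equiv i\cdot(-1)^{-1}\equiv p-i$, so $w\in R_{p-i}$, and consequently $c(w)\neq A$.

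Now I would run the collapse. If $c(x)\neq A$, the dichotomy yields $c(x)=c(g(x))$, and this common color is again $\neq A$, so the dichotomy applies to $g(x)$ as well; iterating along the chain $x,\,g(x),\,\dots,\,g^\alpha(x)=w$ gives $c(x)=c(w)$. Hence every element not colored $A$ carries the single color $c(w)$, so $c$ uses at most the two colors $A$ and $c(w)$. This contradicts the assumption that $c$ has at least three colors, and therefore $P_i=P_{p-i}$.

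The main obstacle is choosing the right family of triples: the naive attempt of fixing $a$ together with one $b\in R_{p-i}$ does not close, because the completing coordinate is only pinned down modulo $p^{\alpha-1}$ and its color is unconstrained. The decisive move is instead to let the third coordinate range over all of $\mathbb{Z}_{p^\alpha}$ and to read the resulting constraints as the single functional relation ``$c(x)=A$ or $c(x)=c(g(x))$'', whose iteration under the $p$-adic contraction $g$ forces a global two-coloring. I would also flag that the hypothesis $r\geq 3$ is genuinely needed here: for $\alpha\geq 2$ the two-coloring assigning $A$ to $\{x\equiv 1 \pmod p\}$ and $B$ elsewhere is rainbow-free yet has $P_1\neq P_{p-1}$, so the statement must be read in its intended regime of at least three colors.
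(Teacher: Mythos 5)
Your proof is correct, and it takes a genuinely different route from the paper's. The paper fixes the witness $G\in P_i\setminus P_{p-i}$ and works with triples whose first and third coordinates both lie in $R_{p-i}$: it first forces $c(x_1)=c(x_3)$ for pairs whose difference generates $p\mathbb{Z}_{p^\alpha}$, concluding that $R_{p-i}$ is monochromatic, and then produces an explicit rainbow triple by bringing in a third color from some other residue class. You instead let the third coordinate of the triple $(a,\,px-a,\,x)$ range over all of $\mathbb{Z}_{p^\alpha}$, read off the functional dichotomy $c(x)=A$ or $c(x)=c(px-a)$, and iterate the affine contraction $g(x)=px-a$ down to its fixed point $w=a(p-1)^{-1}\in R_{p-i}$ to collapse the entire coloring onto $\{A,c(w)\}$. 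Your version is more conceptual and arguably cleaner (one family of triples, one contraction argument, no case analysis on which residue class supplies the third color), while the paper's version isolates the intermediate structural fact that $R_{p-i}$ would have to be monochromatic. Both arguments silently use that $c$ has at least three colors --- the paper needs an element $pz+j$ with $c(pz+j)\notin\{G,B\}$, you need the two-color conclusion to be contradictory --- and your explicit remark that the statement is false for $2$-colorings (any $2$-coloring is vacuously rainbow-free, yet can separate $P_i$ from $P_{p-i}$) is accurate; the lemma is only ever invoked in the paper with at least four colors, so this implicit hypothesis is harmless but worth flagging exactly as you did.
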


\begin{proof}
For the sake of contradiction, suppose that there exists $0<i<p$ with $G\in P_i\setminus P_{p-i}$. Then there exists an element $px+i$ with color $G$ in $R_i$. Let $py+p-i$ be an element in $R_{p-i}$. Notice that 
\begin{align*}
x_1&=p(py-x+p-1-i)+p-i\\
x_2&=px+i\\
x_3&=py+p-i
\end{align*}
is a triple. Since $G\notin P_{p-i}$, we have $c(x_3)=c(x_1)$. Furthermore, $x_1-x_3=p(py-x+p-1-i)+p-i-py-p+i=p(y(p-1)-x+p-1)$. Since $py+p-i$ was arbitrary, we can choose $y$ so that $y(p-1)-x+p-1\not\equiv 0\mod p$. Since $y(p-1)-x+p-1\not\equiv 0\mod p$, we know that $y(p-1)-x+p-1$ is an additive generator of $\mathbb{Z}_{p^{\alpha-1}}$. This implies that $P_{p-i}=\{B\}$. 

Let $pz+j$ be an element with $c(pz+j)\notin \{G,B\}$. Then 
\begin{align*}
x_1&=p(pz-x+j-1)+p-i\\
x_2&=px+i\\
x_3&=pz+j\\
\end{align*}
is a rainbow triple, which is a contradiction. 
\end{proof}

Notice that by Lemma \ref{samecolors}, it is sufficient to only consider the structure of $R_i$ for $0<i<\frac{p+1}{2}$.

\begin{lemma}\label{zeromono}
Suppose $c$ does not admit a rainbow triple. If there exists $0<i<p$ such that $|P_i\setminus P_0|\geq 1$, then $|P_0|=1$. 
\end{lemma}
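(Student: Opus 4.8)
The plan is to show that the color $G$, which appears in some $R_i$ but not in $R_0$, forces every two elements of $R_0$ to receive the same color. The mechanism is that elements of $R_0$ get paired off through triples whose third coordinate is a $G$-colored element; since $G\notin P_0$, the two $R_0$-coordinates of such a triple cannot match the $G$ coordinate, so rainbow-freeness forces them to match each other.

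First I would isolate the only useful triple shape. Write $R_0=\{pa : a\in\mathbb Z_{p^{\alpha-1}}\}$ and let $g=p\gamma+i$ be a $G$-colored element of $R_i$, where $0<i<p$. For any $a,b\in\mathbb Z_{p^{\alpha-1}}$ with $a+b\equiv p\gamma+i \pmod{p^{\alpha-1}}$ we have $pa+pb=pg$, so $(pa,pb,g)$ is a genuine solution of $x_1+x_2=px_3$. Since $c(g)=G\notin P_0$ while $c(pa),c(pb)\in P_0$, neither $R_0$-coordinate can equal $c(g)$; as $c$ is rainbow-free, this forces $c(pa)=c(pb)$. Setting $T:=p\gamma+i \bmod p^{\alpha-1}$, this says the involution $\sigma(a)=T-a$ of $\mathbb Z_{p^{\alpha-1}}$ preserves the color of $pa$.

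A single involution only pairs $a$ with $T-a$, which does not collapse $R_0$ by itself. To upgrade this I would invoke Lemma \ref{samecolors}: since $G\in P_i$ we get $G\in P_{p-i}$, so there is a $G$-colored element $g'=p\delta+(p-i)\in R_{p-i}$. Running the same argument with $g'$ gives a second color-preserving involution $\tau(a)=T'-a$, where $T':=p\delta+(p-i)\bmod p^{\alpha-1}$. The composition $\tau\sigma$ is translation by $d:=T'-T$, and $d\equiv (p-i)-i=-2i \pmod p$, which is a unit in $\mathbb Z_{p^{\alpha-1}}$ because $p$ is an odd prime and $0<i<p$. Hence $\langle\tau\sigma\rangle$ acts transitively on $\mathbb Z_{p^{\alpha-1}}$, and since $\sigma$ and $\tau$ both preserve the color of $pa$, so does every translate by $d$. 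Therefore $c$ is constant on $R_0$, i.e. $|P_0|=1$.

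The crux, and the step I expect to need the most care, is producing the second target sum: one $G$-colored element yields only a matching on $R_0$, and it is the symmetry $P_i=P_{p-i}$ of Lemma \ref{samecolors} that supplies a $G$-element in the opposite residue class, making the two target sums differ by a unit modulo $p$ and hence generate all translations of $\mathbb Z_{p^{\alpha-1}}$. I would also verify the boundary case $\alpha=2$ (where $\mathbb Z_{p^{\alpha-1}}=\mathbb Z_p$ and any nonzero $d$ generates) and note that the degenerate possibility $pa=pb$ is harmless.
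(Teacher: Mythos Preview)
Your proof is correct and is essentially the same argument as the paper's. The paper also uses a $G$-colored element in $R_i$ and (via Lemma \ref{samecolors}) one in $R_{p-i}$, chains the two resulting $R_0$-triples so that the shared middle term cancels, and observes that the induced shift on $R_0$ is congruent to $-2i \pmod p$ and hence a generator; your framing in terms of two color-preserving involutions whose composition is translation by a unit is just a cleaner way of saying the same thing.
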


\begin{proof}
Since $c$ does not admit a rainbow triple, $P_i=P_{p-i}$. Without loss of generality, suppose that $G\in P_i\setminus P_0$ and let $c(pa_1+i)=c(pa_2+p-i)=G$. Let $pb\in R_0$ be arbitrary. Consider the following triple:
\begin{align*}
x_1&=pb\\
x_2&=p(pa_1+i-b)\\
x_3&=pa_1+i.
\end{align*} 
Since $c$ is rainbow-free, $c(x_1)=c(x_2)$. Next, consider the following triple:
\begin{align*}
x'_1&=p(pa_1+i-b)\\
x'_2&=p(pa_2+p-i-pa_1-i+b)\\
x'_3&=pa_2+p-i.
\end{align*}
Since $c$ is rainbow-free, $c(x'_1)=c(x'_2)$. This implies that $$c(pb)=c(p(pa_2+p-i-pa_1-i+b)).$$ Notice that difference in position between $x'_2$ and $pb$, given by $pa_2+p-i-pa_1-i+b-b$, does not depend on $b$. Furthermore, $pa_2+p-i-pa_1-i+b-b$ is relatively prime to $p^{\alpha-1}$. Therefore, all elements in $R_0$ receive the same color. 
\end{proof}

\begin{lemma}\label{case12}
Let $p$ be prime with $p\geq 5$. If there exists $0<i<\frac{p+1}{2}$ such that $|P_i\setminus P_0|\geq 2$ and $G\notin P_i\cup P_0$, then $c$ admits a rainbow triple.
\end{lemma}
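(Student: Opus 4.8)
The plan is to assume, for contradiction, that $c$ is a rainbow-free coloring of $\mathbb{Z}_{p^\alpha}$ satisfying the hypotheses, and to show that the residue class $R_i$ is forced to be monochromatic — contradicting $|P_i\setminus P_0|\ge 2$, which in particular requires at least two colors on $R_i$. First I would record the two facts that drive everything: by Lemma \ref{samecolors} we have $P_i=P_{p-i}$, so the excluded color satisfies $G\notin P_i\cup P_{p-i}$; and since $G$ is a color of $c$ with $G\notin P_0$, some $G$-colored element lies in a class $R_j$ with $j\neq 0$. Applying Lemma \ref{samecolors} once more to $G\in P_j=P_{p-j}$ produces $G$-colored elements $w\in R_j$ and $w'\in R_{p-j}$; note $j\neq p-j$ because $p$ is odd, so $w$ and $w'$ sit in distinct classes.

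The engine of the argument is a color-transport identity obtained by using a $G$-element as the $x_3$ of a triple. For any $u\in R_i$, the triple $(u,\,pw-u,\,w)$ is a solution of $x_1+x_2=px_3$, and $pw-u\in R_{p-i}$. Since $c(w)=G$ lies outside $P_i=P_{p-i}$, the color $G$ is distinct from both $c(u)$ and $c(pw-u)$; rainbow-freeness then forces $c(pw-u)=c(u)$. Running the identical argument on $R_{p-i}$ with $w'$ shows $c(pw'-u')=c(u')$ for every $u'\in R_{p-i}$. Composing these two reflections with the choice $u'=pw-u$ yields $c\big(u+p(w'-w)\big)=c(u)$ for all $u\in R_i$.

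Finally I would exploit that $w'-w\equiv -2j\pmod p$ is a unit, so $p(w'-w)$ has additive order $p^{\alpha-1}$ and its multiples run through all of $p\mathbb{Z}_{p^\alpha}$; since $R_i$ is a coset of this subgroup, adding these translations acts transitively on $R_i$. Thus the identity $c(u+p(w'-w))=c(u)$ propagates a single color across all of $R_i$, making $R_i$ monochromatic and contradicting the hypothesis $|P_i\setminus P_0|\ge 2$. The one genuinely nonobvious step — and the place I expect the real work — is the transport identity $c(u)=c(u+p(w'-w))$: it requires simultaneously (i) arranging a $G$-element as $x_3$ so that the other two coordinates land in the $G$-free classes $R_i$ and $R_{p-i}$, forcing them to share a color, and (ii) locating two $G$-elements whose difference is a unit, which is exactly what the symmetry $P_j=P_{p-j}$ of Lemma \ref{samecolors} guarantees. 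Once that identity is in hand, the monochromaticity of $R_i$, and hence the contradiction, is immediate.
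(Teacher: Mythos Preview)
Your argument is correct and follows essentially the same approach as the paper's proof: both pick $G$-colored elements $w\in R_j$ and $w'\in R_{p-j}$, use each as $x_3$ in a triple whose other two entries lie in the $G$-free classes $R_i,R_{p-i}$, compose the two resulting color-preserving reflections to obtain the translation identity $c(u)=c(u+p(w'-w))$ on $R_i$, and then note that $w'-w\equiv -2j\pmod p$ is a unit so the translation acts transitively and forces $R_i$ monochromatic, contradicting $|P_i\setminus P_0|\ge 2$. Your write-up is in fact slightly cleaner than the paper's in that you phrase the two steps as reflections and make the transitivity on the coset $R_i$ explicit.
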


\begin{proof}
For the sake of contradiction, suppose that $c$ does not admit a rainbow triple. Since $p\geq 5$ and $|P_0|=1$, there exists $j\neq i$ such that $0<j<p$ and $G\in P_j\setminus (P_i\cup P_0)$. By Lemma \ref{samecolors}, $P_j=P_{p-j}$ and $P_i=P_{p-i}$. Let $c(pa_1+j)=c(pa_2+p-j)=G$. Let $pb+i\in R_i$ be arbitrary. Consider the following triple:
\begin{align*}
x_1&=pb+i\\
x_2&=p(pa_1+j-b-1)+p-i\\
x_3&=pa_1+j.
\end{align*}
Then $c(x_1)=c(x_2)$. Next consider the following triple:
\begin{align*}
x'_1&= p(pa_1+j-b-1)+p-i\\
x'_2&=p(pa_2+p-j-pa_1-j+b)+i\\
x'_3&=pa_2+p-j
\end{align*}
Then $c(x_1')=c(x'_2)$. This implies that $$c(pb+i)=c(p(pa_2+p-j-pa_1-j+b)+i).$$
Notice that the difference in position between $x'_2$ and $pb+i$, given by $pa_1+p-j-pa_1-j+b-b$, does not depend on $b$. Furthermore, $pa_2+p-j-pa_1-j+b-b$ is relatively prime to $p^{\alpha-1}$. Therefore, all elements in $R_i$ receive the same color. This is a contradiction, since $|P_i|\geq 2$. 
\end{proof}

\begin{lemma}\label{atleast2}
If $p\geq 5$, $\mathbb{Z}_{p^\alpha}$ is colored with at least $4$ colors, and there exists $0<i<\frac{p+1}{2}$ with $\text{Im}(c)=P_i\cup P_0$ and $|P_i\setminus P_0|\geq 2$, then $c$ admits a rainbow triple.
\end{lemma}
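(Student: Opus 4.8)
The plan is to assume, for contradiction, that $c$ is rainbow-free and to force a contradiction with the fact that $P_i$ is large. First I would invoke the earlier structure lemmas: since $|P_i\setminus P_0|\ge 2\ge 1$, Lemma \ref{zeromono} gives $P_0=\{A\}$ for a single color $A$, and Lemma \ref{samecolors} gives $P_i=P_{p-i}=:P$. From $\mathrm{Im}(c)=P_i\cup P_0$ together with the four-color hypothesis one checks that $|P|\ge 3$ (whether or not $A\in P$). The key local fact I would record is a two-color property: if $x_1\in R_i$, $x_2\in R_{p-i}$ with $c(x_1)\neq c(x_2)$, then, writing $x_1=pu+i$ and $x_2=pv+(p-i)$, every solution $x_3$ of $px_3=x_1+x_2$ (these form one coset of the order-$p$ subgroup $H=\langle p^{\alpha-1}\rangle$, all lying in $R_{(u+v+1)\bmod p}$) satisfies $c(x_3)\in\{c(x_1),c(x_2)\}$. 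This is immediate from rainbow-freeness.

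Next I would reduce to the situation in which each residue block $R_i^{(s)}:=\{pu+i:\ u\equiv s\bmod p\}$, and likewise each $R_{p-i}^{(s')}$, is monochromatic. In the main case $A\notin P$ this is obtained by the chaining of Lemmas \ref{zeromono} and \ref{case12}: using that all of $R_0$ carries the single color $A$, I would build two linked triples with third coordinates in $R_0$ whose combined effect equates $c(pb+i)$ with $c\big(pb+i+\delta\big)$ for a shift $\delta$ that is independent of $b$ and ranges over all multiples of $p^2$ as the $A$-colored elements of $R_0$ vary; here $c(x_1)=c(x_2)$ is forced precisely because $A\notin P$ keeps both intermediate colors distinct from the color $A$ of the third coordinate. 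Iterating collapses each block of $R_i$ and of $R_{p-i}$ to a single color; write $f(s)$ and $g(s')$ for these block colors, so that $\mathrm{Im}(f)=\mathrm{Im}(g)=P$.

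With monochromatic blocks the two-color property upgrades cleanly: as $x_1,x_2$ range over $R_i^{(s)}\times R_{p-i}^{(s')}$ the associated target cosets sweep out the entire class $R_{(s+s'+1)\bmod p}$, so whenever $f(s)\neq g(s')$ we obtain $P_{(s+s'+1)\bmod p}\subseteq\{f(s),g(s')\}$. The endgame applies this with the two targets $i$ and $p-i$. Since $|P_i|=|P_{p-i}|=|P|\ge 3$, neither palette can lie in a two-element set, which forces $f(s)=g(i-1-s)$ and $f(s)=g(-i-1-s)$ for every $s$. Comparing these gives $g(w)=g(w+2i)$ for all $w$, and because $\gcd(2i,p)=1$ this makes $g$ constant, contradicting $|\mathrm{Im}(g)|=|P|\ge 3$. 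This furnishes the desired rainbow triple.

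The step I expect to be the main obstacle is the reduction to monochromatic blocks when the distinguished color $A$ itself appears on $R_i$, i.e. $A\in P$ (the case in which the four-color hypothesis in fact yields $|P|\ge 4$). There the chaining's forced equalities can fail, since a tracked element or its partner may be colored $A$ and so agree with the color $A$ of the third coordinate rather than with each other. This is exactly where the hypotheses $\mathrm{Im}(c)=P_i\cup P_0$ and ``at least four colors'' must be used: from the two-color property with third coordinates in $R_0$ one still learns that, for each linked pair $(s,-s-1)$, at most one of $R_i^{(s)}, R_{p-i}^{(-s-1)}$ can carry two distinct non-$A$ colors. I would isolate the case $A\in P$ as a separate sub-argument — for instance by locating a residue class that is monochromatic in $A$ and chaining across blocks through it, or by using the extra color directly to steer a target coset onto a third color — and otherwise run the uniform endgame above.
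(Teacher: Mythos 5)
There is a genuine gap: your whole engine is the reduction to monochromatic blocks $R_i^{(s)}$, and that reduction is only carried out when the color $A$ of $R_0$ does not appear on $R_i$. In that sub-case your argument does go through, in fact more directly than you describe: a single triple $\big(pb+i,\ p(pw-b-1)+(p-i),\ pw\big)$ with third coordinate in $R_0$ forces $c(pb+i)=c\big(p(pw-b-1)+(p-i)\big)$ for every $w$ (both colors lie in $P=P_i=P_{p-i}$, which misses $A$), so each block of $R_{p-i}$, and by symmetry of $R_i$, is monochromatic with $f(s)=g(-1-s)$; your endgame via $f(s)=g(i-1-s)=g(-i-1-s)$ and $\gcd(2i,p)=1$ then finishes cleanly. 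But the hypotheses of the lemma do not exclude $A\in P_i$ (they only give $P_0=\{A\}$ via Lemma \ref{zeromono} and $\mathrm{Im}(c)=P_i\cup P_0$ with $|\mathrm{Im}(c)|\geq 4$), and in that case every forced equality through $R_0$ degenerates to $c(x_1)\in\{c(x_2),A\}$, so blocks need not be monochromatic and the functions $f,g$ are not even defined. Your closing suggestions for this case (``locating a residue class monochromatic in $A$,'' ``steering a target coset onto a third color'') are directions, not arguments; as written the lemma is unproved precisely when $A\in P_i$.

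The paper's proof sidesteps this dichotomy entirely by never chaining through $R_0$. It fixes two non-$A$ colors $G,B\in P_i=P_{p-i}$ and links two triples whose \emph{third} coordinates lie in $R_{p-i}$ and $R_i$ at elements of known colors $G$ and $B$; this forces an element of $R_i$ colored in $\{B,G\}$ at index shifted by $2i\bmod p$, and iterating (again using $\gcd(2i,p)=1$) puts a $\{B,G\}$-colored element of $R_i$ in every index class mod $p$. A third non-$A$ color $Y\in P_{p-i}$, guaranteed by the four-color hypothesis, is then paired with such an element so that the third coordinate lands in $R_0$, producing the rainbow triple. The arithmetic heart ($2i$ generating $\mathbb{Z}_p$) is the same as in your endgame, but the paper's bookkeeping is insensitive to whether $A$ appears on $R_i$. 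To salvage your approach you would either need a separate argument for $A\in P_i$ or switch your chaining to third coordinates in $R_i\cup R_{p-i}$ as the paper does.
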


\begin{proof}
For the sake of contradiction, suppose that $c$ does not admit a rainbow triple. By Lemma \ref{zeromono}, let $P_0=\{R\}$. By Lemma \ref{samecolors}, $P_i=P_{p-i}$. Since $P_i$ contains all colors except possibly $R$, there exists $a,b,d$ such that $c(pa+i)=G$, $c(pb+p-i)=B$ and $c(pd+i)=B$. Consider the following triple:
\begin{align*}
x_1&=pa+i\\
x_2&=p(pb+p-i-a-1)+p-i\\
x_3&=pb+p-i.
\end{align*}
Then $c(x_2)\in \{B,G\}$. Let $x\in \{a,d\}$ such that $c(px+i)\neq c(x_2)$ and consider the following triple:
\begin{align*}
x'_1&=p(pb-p-i-a-1)+p-i\\
x'_2&=p(px-pb+p+2i+a)+i\\
x'_3&=px+i.
\end{align*}
Notice that $c(x'_2)\in\{B,G\}$. Furthermore, the difference in position between $x'_2$ and $pa+i$, given by $px-pb+p+2i\equiv 2i\mod p$, does not depend on $a,b,d$ modulo $p$. Therefore, for any $x\in \mathbb Z_p$ there exists $a\equiv x$ such that $c(pa+i)\in \{B,G\}$.

Since $P_{p-i}$ contains all colors of $c$ except for possibly $R$, there exists $y$ such that $c(py+p-i)=Y$. Select $a\equiv -1-y\mod p$ such that $c(pa+i)\in \{B,G\}$. Then the triple $(py+p-i, pa+i, a+y+1)$ is rainbow since $a+y+1\in R_0$.
\end{proof}

\begin{proof}[Proof of Theorem \ref{prime}]
Proposition \ref{primelb3} provides the lower bound for $p=3$, $\alpha\geq 2$. Observation \ref{p=3} covers the case when $p=3,\alpha=1,2$.

We will proceed by induction on $\alpha.$ Suppose that $rb(\mathbb Z_{p^{\alpha-1}},3)=4$ for some $\alpha \geq 3$. Let $c$ be a $4$ coloring of $\mathbb{Z}_{3^\alpha}$. For the sake of contradiction, suppose that $c$ does not admit a rainbow triple. If $|P_0|=4$, then $c$ admits a rainbow triple by the induction hypothesis. Therefore, $|P_0|\leq 3$ and there exits $0<i<p$ such that $|P_i\setminus P_0|\geq 1$. By Lemma \ref{zeromono}, $|P_0|=1$. This implies that $\text{im}(c)=|P_i\setminus P_0|$. By Lemma \ref{atleast2}, $c$ admits a rainbow triple. This completes the case when $p=3$. 
 
Let $p\geq 5$. With Proposition \ref{K=P} as the base case, we will proceed by induction on $\alpha$. Suppose that $rb(\mathbb{Z}_{p^{\alpha-1}},p)=\frac{p+1}{2}+1$ for some $\alpha \geq 2$. For the sake of contradiction, suppose that $c$ does not admit a rainbow triple. If $|P_0|=\frac{p+1}{2}+1$, then $c$ admits a rainbow triple by the induction hypothesis. Therefore, $|P_0|\leq \frac{p+1}{2}$ and there exists $0<j<p$ such that $|P_j\setminus P_0|\geq 1$. By Lemma \ref{zeromono}, $P_0=\{R\}$. By the pigeon hole principle, there exists $0<i<\frac{p+1}{2}$ such that $|P_i\setminus P_0|\geq 2$. Notice that one of the following must hold:
\begin{enumerate}
\item $G\notin P_i\cup P_0$ for some color $G\neq R$,
\item $\text{im}(c)=P_i\cup P_0$.
\end{enumerate}
Therefore, by Lemmas \ref{case12} and \ref{atleast2}, $c$ must admit a rainbow triple. This completes the case when $p\geq5$.
\end{proof}

\subsection{Lower bound for $rb(\mathbb{Z}_n,p)$, $p$ prime} \label{SectionGenKLowerBound}
Since $p$ is the coefficient of the equation that we are considering, we will use $q$ to denote a prime other than $p$.
Using values for $rb(\mathbb{Z}_q, k)$, we establish a lower bound for $rb(\mathbb{Z}_n, p)$.
In order to proceed in a similar manner as with the Schur equation, two lemmas about the structure of triples are necessary. 

\begin{lemma} \label{SweetDivisonTrick}
If $x_1 + x_2 = k x_3$ is a triple in $\mathbb{Z}_n$ where $m|x_1,x_2,x_3$ for some $m|n$, $m,n\in \mathbb{Z}$, then there exists a triple of the form $x_1/m + x_2/m = k x_3/m$ in $\mathbb{Z}_\frac{n}{m}$.
\end{lemma}
\begin{proof}
By definition $x_1 + x_2 = k x_3$ in $\mathbb{Z}_n$ implies:
\begin{align*}
x_1 + x_2 = qn + r \\
k x_3 = tn + r
\end{align*}

Divide both equations by $m$ to get:
\begin{align*}
\frac{x_1}{m} + \frac{x_2}{m} = q\frac{n}{m} + \frac{r}{m} \\
k \frac{x_3}{m} = t\frac{n}{m} + \frac{r}{m}
\end{align*}

Now we must check that $\frac{r}{m}$ is an integer. Since $m|(x_1 + x_2 - qn)$, we know $m|r$.
By definition, this means there exists a triple of the form $x_1/m + x_2/m = x_3/m$ in $\mathbb{Z}_\frac{n}{m}$.
\end{proof}

Next, we show that $q$ cannot divide exactly two terms of a triple.

\begin{lemma}\label{qtriple}
Let $(x_1, x_2, x_3)$ be a triple of the form $x_1 + x_2 = k x_3$ in $\mathbb{Z}_{qn}$. 
If $q$ is relatively prime to $k$ and $q$ divides two of the terms in $(x_1, x_2, x_3)$ then $q$ must divide the third term in $(x_1, x_2, x_3)$.
\end{lemma}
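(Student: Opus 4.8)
The plan is to mirror the proof of Lemma~\ref{SchurPrimeDivisibility}, the Schur analogue, while carrying along the coefficient $k$. Starting from the relation $x_1+x_2=kx_3$ in $\mathbb{Z}_{qn}$, I would write it as a pair of integer equations $x_1+x_2=c_1qn+r_0$ and $kx_3=c_2qn+r_0$ sharing a common remainder $r_0$, with $c_1,c_2\in\mathbb{Z}$. The argument then splits into three cases according to which two of the three terms are assumed divisible by $q$, and in each case the goal is first to deduce $q\mid r_0$ and then to pin down the remaining term.

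If $q\mid x_1$ and $q\mid x_2$, then $q$ divides $x_1+x_2$ as well as $c_1qn$, so the first equation gives $q\mid r_0$; substituting into the second equation yields $q\mid kx_3$. This is the one place where the hypothesis $\gcd(q,k)=1$ is used: it lets me cancel $k$ and conclude $q\mid x_3$. If instead $q\mid x_1$ and $q\mid x_3$, then $q\mid kx_3$ together with $q\mid c_2qn$ forces $q\mid r_0$ from the second equation, and then the first equation gives $q\mid x_2$. The final case, $q\mid x_2$ and $q\mid x_3$, is identical to the previous one after interchanging $x_1$ and $x_2$, since the equation is symmetric in those two variables.

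The only genuine obstacle, and the sole departure from Lemma~\ref{SchurPrimeDivisibility}, is the cancellation of $k$ in the first case; the remaining steps are the same elementary bookkeeping with remainders. It is worth noting that primality of $q$ is never actually invoked---only $\gcd(q,k)=1$ is needed---so the statement would hold for any modulus coprime to $k$, though in the intended application $q$ is a prime distinct from $p$ and the coprimality holds automatically.
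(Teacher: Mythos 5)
Your proposal is correct and follows essentially the same route as the paper: the same decomposition into integer equations sharing a remainder, the same three cases, the deduction $q\mid r_0$ in each, and the use of $\gcd(q,k)=1$ to pass from $q\mid kx_3$ to $q\mid x_3$. Your side remark that only coprimality of $q$ with $k$ (not primality of $q$) is needed is accurate and matches how the hypothesis is actually used in the paper's argument.
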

\begin{proof}
We consider the case where $q$ divides $x_1$, $x_2$ and the case where $q$ divides $x_1$, $x_3$.

\textbf{Case 1: }
Assume $q$ divides $x_1$, $x_2$.
By definition the equation $x_1 + x_2 = k x_3$ in $\mathbb{Z}_{qn}$ means:
\begin{align*}
x_1 + x_2 = c_1 qn + r \\
k \cdot x_3 = c_2 qn + r
\end{align*}
We rearrange the first equation to get $q$ divides $x_1 + x_2 - c_1 qn$ which implies that $q$ divides $r$.
Thus $q$ divides $c_2 qn + r$ which mplies $q$ divides $k x_3$.
We know $q$ and $k$ are relativity prime, therefore $q$ must divide $x_3$.

\textbf{Case 2: } Similarly, assume $q$ divides $x_1$, $x_3$.
By definition the equation $x_1 + x_2 = k x_3$ in $\mathbb{Z}_{qn}$ means:
\begin{align*}
x_1 + x_2 = c_1 qn + r \\
k \cdot x_3 = c_2 qn + r
\end{align*}
From the second equation we get $q$ divides $k x_3 - c_2 qn$ which implies that $q$ divides $r$.
Thus $q$ divides $x_1 - c_1 \cdot qn - r $ which implies $q$ divides $x_2$.
\end{proof}

Notice that Lemmas \ref{SweetDivisonTrick} and \ref{qtriple} are stated for the equation $x_1+x_2=kx_3$ without the stipulation that $k$ is prime. We can use the above lemmas to find our lower bound. 

\begin{lemma}\label{IStep2}
Let $q,t$ be positive integers with $q$ prime,  and $q\neq p$. If there exists a rainbow-free $r$-coloring of $\mathbb{Z}_t$, then there exists a rainbow-free $(r+rb(\mathbb Z_q,p)-2)$-coloring of $\mathbb{Z}_{qt}$.
\end{lemma}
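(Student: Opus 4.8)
The plan is to mirror the construction in Lemma~\ref{IStep}, but to build the nonzero residues using a structured rainbow-free coloring of $\mathbb{Z}_q$ supplied by Corollary~\ref{uniquecolorcor} rather than the two ad hoc colors used in the Schur case. Let $\hat c$ be a rainbow-free $r$-coloring of $\mathbb{Z}_t$, and by Corollary~\ref{uniquecolorcor} let $\tilde c$ be a maximum rainbow-free coloring of $\mathbb{Z}_q$ in which $0$ is uniquely colored and every color class is symmetric; this coloring uses $rb(\mathbb{Z}_q,p)-1$ colors, one of which is reserved for $0$. After relabeling so that the $r$ colors of $\hat c$ and the $rb(\mathbb{Z}_q,p)-2$ nonzero colors of $\tilde c$ are pairwise distinct, define a coloring $c$ of $\mathbb{Z}_{qt}$ by
\[
c(x):=
\begin{cases}
\hat c(x/q) & x\equiv 0 \bmod q,\\
\tilde c(x\bmod q) & x\not\equiv 0 \bmod q.
\end{cases}
\]
This uses exactly $r+(rb(\mathbb{Z}_q,p)-2)$ colors, as required; it remains to show $c$ is rainbow-free.

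First I would record the two invariances of $\tilde c$ that drive the argument. Because $0$ is uniquely colored and $(y,-y,0)$ is a triple, rainbow-freeness forces $\tilde c(y)=\tilde c(-y)$; because $(0,py,y)$ is also a triple, rainbow-freeness forces $\tilde c(py)=\tilde c(y)$ for every $y\in\mathbb{Z}_q$. Together these say that the color of a nonzero residue is constant on the orbit $\{\pm p^i y\}$.

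Now let $(x_1,x_2,x_3)$ be an arbitrary triple of $x_1+x_2=px_3$ in $\mathbb{Z}_{qt}$. Since $q\neq p$ are prime, $q$ is coprime to $p$, so by Lemma~\ref{qtriple} the number of coordinates divisible by $q$ is $0$, $1$, or $3$. If all three are divisible by $q$, then Lemma~\ref{SweetDivisonTrick} projects the triple to a triple in $\mathbb{Z}_t$ whose colors are inherited from the rainbow-free $\hat c$, so it is not rainbow. If none is divisible by $q$, then reducing modulo $q$ gives a triple in $\mathbb{Z}_q$ whose colors are exactly the $\tilde c$-colors of the reductions; since $\tilde c$ is rainbow-free, two of these colors coincide, so the triple is not rainbow. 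The remaining case is when exactly one coordinate is divisible by $q$: if $q\mid x_3$ then $x_1\equiv -x_2\bmod q$ and symmetry gives $c(x_1)=c(x_2)$, while if $q\mid x_1$ (the case $q\mid x_2$ is identical) then $x_2\equiv px_3\bmod q$ and the invariance $\tilde c(px_3)=\tilde c(x_3)$ gives $c(x_2)=c(x_3)$. In every case two coordinates share a color, so $c$ is rainbow-free.

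The main obstacle is the exactly-one-divisible case, specifically the subcase where the divisible coordinate is $x_1$ or $x_2$: unlike the Schur setting, the two surviving coordinates are not additive inverses but are related by multiplication by $p$, so the argument genuinely needs the multiplicative invariance $\tilde c(py)=\tilde c(y)$. Deriving this invariance from rainbow-freeness together with $0$ being uniquely colored is the crux; the symmetry property alone guaranteed by Corollary~\ref{uniquecolorcor} does not suffice. I would also verify the bookkeeping that the inherited palette and the new palette are genuinely disjoint, so that the color count is exactly $r+rb(\mathbb{Z}_q,p)-2$.
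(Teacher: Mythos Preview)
Your proof is correct and follows essentially the same construction and case analysis as the paper: the coloring $c$ is defined identically, Lemma~\ref{qtriple} yields the same trichotomy, and Cases~1 and~3 are handled the same way (the paper also uses the triple $(0,x_2\bmod q,x_3\bmod q)$ together with $0$ being uniquely colored to force the two nonzero residues to share a color). The one minor difference is your Case~2: you observe that the reductions $(x_1\bmod q,x_2\bmod q,x_3\bmod q)$ form a triple in $\mathbb{Z}_q$ and invoke rainbow-freeness of $\tilde c$, whereas the paper instead appeals to the bound $rb(\mathbb{Z}_q,p)\le 4$ to say at most two new colors appear---your argument is slightly cleaner here since it does not depend on that numerical bound.
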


\begin{proof}
Let $q,t\in\mathbb{Z}$ such that $q$ is prime, and $q\neq p$. 
Let $\hat{c}$ be a rainbow-free $r$-coloring for $\mathbb{Z}_{t}$ and let $\bar{c}$ be a maximum coloring of $\mathbb{Z}_q$ such that 0 is uniquely colored and the other color classes are symmetric subsets, as described in Corollary \ref{uniquecolorcor}.
Let $c$ be an exact $(r+1)$-coloring of $\mathbb{Z}_{qt}$ if $rb(\mathbb{Z}_q, p)=3$ or an exact $(r+2)$-coloring of $\mathbb{Z}_{qt}$ if $rb(\mathbb{Z}_q, p)=4$ as follows:
$$c(x)=\begin{cases}
      \hat{c}(\frac{x}{q}) & x \equiv 0 \mod q\\ 
      r+\bar{c}(x\mod q) & \text{otherwise}.
   \end{cases}
$$

Since $q$ and $p$ are distinct primes, $q$ and $p$ are relatively prime.
By Lemma \ref{qtriple}, since $q$ is relatively prime to $p$, $q$ cannot divide exactly two of the terms in $(x_1,x_2,x_3)$ for the equation $x_1+x_2=px_3$.
Therefore, for all triples in $\mathbb{Z}_{qt}$, $q$ can divide all three elements, no elements, or exactly one element of the triple. 

\textbf{Case 1:} If $q$ divides all three terms in $(x_1,x_2,x_3)$, then by the constructions of $c$, the triple has the same colors as the triple $(\frac{x_1}{q},\frac{x_2}{q},\frac{x_3}{q})$ in $\hat{c}$. 
By Lemma \ref{SweetDivisonTrick}, if $(x_1,x_2,x_3)$ is a triple in $\mathbb{Z}_{qt}$ and $q|x_1,x_2,x_3$, then $(\frac{x_1}{q},\frac{x_2}{q},\frac{x_3}{q})$ is a triple in $\mathbb{Z}_t$. 
Thus, since $\hat{c}$ is a rainbow-free coloring, triples where all three elements are divisible by $q$ cannot be rainbow in $c$.

\textbf{Case 2:} Suppose $q$ divides none of the terms in $(x_1,x_2,x_3)$, there is a maximum of two colors added on terms not divisible by $q$. 
Thus, there are at most two colors coloring the elements in any such triple, and triples of the form $(x_1,x_2,x_3)$ with each $x_i$ not divisible by $q$ are not rainbow.

\textbf{Case 3:} Suppose $q$ divides exactly one of $(x_1,x_2,x_3)$. 
First assume $q$ divides $x_1$.
Notice that if $x_1 + x_2 \equiv p x_3 \mod qt$ then $x_1 + x_2 \equiv p x_3 \mod q$.
Since $0$ is uniquely colored in $\bar{c}$, the rainbow-free coloring of $\mathbb{Z}_q$, any triple in $\mathbb{Z}_q$ of the form $0 + x_2 \equiv px_3 \mod q$ is colored so that $x_2$ and $x_3$ receive the same color.
In this case, $c(x_2) =r+ \bar{c}(x_2 \mod q)$ and $c(x_3) =r+ \bar{c}(x_3 \mod q)$, so $(x_1,x_2,x_3)$ is not rainbow under $c$. If $q$ divides either $x_2$ or $x_3$ the argument proceeds the same way.
\end{proof}

\begin{prop} \label{GenKLowerBound}
Let $p$ be prime and let $n$ be an integer with prime factorization $n=p^\alpha \cdot q_1^{\alpha_1}\cdot q_2^{\alpha_2}\cdots q_m^{\alpha_m}$ where $q_i$ is prime, $q_i \neq q_j$ for $i \neq j$ and $\alpha_i \geq 0$. Then, 
$$rb(\mathbb{Z}_n, p) \geq rb(\mathbb{Z}_{p^\alpha}, p) + \sum_{i=1}^m \Big( \alpha_i (rb(\mathbb{Z}_{q_i}, p)-2) \Big)$$
\end{prop}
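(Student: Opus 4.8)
The plan is to prove the lower bound by induction on the total number of prime factors of $n$ (counted with multiplicity), using Lemma \ref{IStep2} as the inductive engine exactly as Proposition \ref{SchurLowerbd} used Lemma \ref{IStep} in the Schur case. The base case is $\alpha_1 = \cdots = \alpha_m = 0$, i.e. $n = p^\alpha$, where the claimed bound reads $rb(\mathbb{Z}_{p^\alpha}, p) \geq rb(\mathbb{Z}_{p^\alpha}, p)$, which holds trivially. (The degenerate subcase $\alpha = 0$ with no $q_i$ factors, where $n=1$, is covered by the convention $rb(\mathbb{Z}_{p^0}, p) = 2$ introduced in Theorem \ref{GenKFormula}.)

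For the inductive step, I would assume the bound holds for every integer with $N$ prime factors and take $n = p^\alpha q_1^{\alpha_1} \cdots q_m^{\alpha_m}$ with a total of $N+1$ prime factors. Since there is at least one factor of some $q_j \neq p$ to peel off (if there were none, we are in the base case), fix such a $q_j$ and set $t = n/q_j$. By the induction hypothesis applied to $t$, there exists a rainbow-free $r$-coloring of $\mathbb{Z}_t$ where
$$ r = rb(\mathbb{Z}_{p^\alpha}, p) + \Big( \sum_{i=1}^m \alpha_i (rb(\mathbb{Z}_{q_i}, p) - 2) \Big) - (rb(\mathbb{Z}_{q_j}, p) - 2) - 1, $$
that is, one less than the conjectured rainbow number of $\mathbb{Z}_t$, so that a rainbow-free coloring is guaranteed to exist. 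Applying Lemma \ref{IStep2} with this $q = q_j$ and this $t$ then produces a rainbow-free $(r + rb(\mathbb{Z}_{q_j}, p) - 2)$-coloring of $\mathbb{Z}_{q_j t} = \mathbb{Z}_n$. Substituting the value of $r$ shows this coloring uses
$$ rb(\mathbb{Z}_{p^\alpha}, p) + \sum_{i=1}^m \alpha_i (rb(\mathbb{Z}_{q_i}, p) - 2) - 1 $$
colors, which exhibits a rainbow-free coloring with one fewer color than the claimed bound, hence $rb(\mathbb{Z}_n, p)$ is at least the claimed value.

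I expect the only genuine subtlety to be bookkeeping: verifying that Lemma \ref{IStep2} genuinely applies, which requires $q_j \neq p$ (guaranteed, since $q_j$ is by hypothesis a prime distinct from $p$) and that the exponent of $p$ is never touched during the peeling. Because Lemma \ref{IStep2} only strips off factors of primes $q \neq p$, the factor $p^\alpha$ is preserved throughout the induction and contributes the additive constant $rb(\mathbb{Z}_{p^\alpha}, p)$ rather than being split multiplicatively; this is the structural reason the formula is a sum anchored at $rb(\mathbb{Z}_{p^\alpha}, p)$ rather than $2$. The one point to state carefully is that the induction should be organized so that one always peels a $q_j$ factor and never attempts to peel a factor of $p$ (for which no analogue of Lemma \ref{IStep2} is available here), and that the base case absorbs the entire $p^\alpha$ part in one piece.
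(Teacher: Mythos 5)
Your proposal is correct and follows essentially the same route as the paper: induction on the number of prime factors of $n$ different from $p$, with base case $n = p^\alpha$, peeling off one factor $q_j$ at a time via Lemma \ref{IStep2}. Your bookkeeping is in fact slightly more careful than the paper's (the published proof omits the ``$-1$'' in its expression for $r$, an off-by-one slip that your version avoids).
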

\begin{proof}
If $n$ is a power of $p$, then there is nothing to show. Suppose that the claim holds true for $n$ where $n$ has $N$ prime factors that are not $p$. 

Assume that $n=p^\alpha \cdot q_1^{\alpha_1}\cdot q_2^{\alpha_2}\cdots q_m^{\alpha_m}$ where $\alpha_1+\dots+\alpha_m=N+1$. By the induction hypothesis, there exists a rainbow-free $r$-coloring of $\mathbb{Z}_{n/q_1}$ where $$r=rb(\mathbb{Z}_{p^\alpha}, p) + \sum_{i=1}^m \Big( \alpha_i (rb(\mathbb{Z}_{q_i}, p)-2) \Big)-rb(\mathbb{Z}_{q_1},p)+2.$$ 

Therefore, by Lemma \ref{IStep2} there exists a rainbow-free $r+\mathbb Z_{q_1},p)-2$ coloring of $Z_n$. Thus, by induction
$$rb(\mathbb{Z}_{p^\alpha}, p) + \sum_{i=1}^m \Big( \alpha_i (rb(\mathbb{Z}_{q_i}, p)-2) \Big).$$
\end{proof}

\subsection{Upper bound for $rb(\mathbb{Z}_n,p)$, $p$ prime} \label{SectionGenKUpperBound}
In this section we prove an upper bound matching Proposition \ref{GenKLowerBound}. The proof of the upper bound uses the following lemmas.

\begin{lemma}\label{nottoobig}
Suppose $c$ is a rainbow-free coloring of $\mathbb{Z}_{qt}$ for $x_1 + x_2 = p x_3$ where $t$ is some positive integer and $q \neq p$ is prime.
Let $R_0, \cdots, R_{t-1}$ be the residue classes modulo $t$ of $\mathbb{Z}_{qt}$, with corresponding color palettes $P_0, \cdots, P_{t-1}$.
Let $j$ be an index such that $|P_j|\geq |P_i|$ for all $0\leq i\leq t-1$. Then $|P_i \setminus P_j| \leq 1$ for all $0 \leq i \leq t-1$.
\end{lemma}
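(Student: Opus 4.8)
The claim is that if $c$ is rainbow-free on $\mathbb{Z}_{qt}$ for $x_1+x_2=px_3$ (with $q\neq p$ prime), and $P_j$ is a palette of maximum size among the residue palettes mod $t$, then every other palette $P_i$ contains at most one color not already in $P_j$. The plan is to argue by contradiction: assume $|P_i\setminus P_j|\geq 2$ for some index $i$, pick two distinct colors $G,B\in P_i\setminus P_j$, and construct a rainbow triple.

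Let me sketch the approach. Let me look at the earlier analogue, Lemma~\ref{LimitedColors}, which handled the Schur case ($p=1$) by comparing an arbitrary palette to $P_0$. There the trick was: if $c(mt+i)=G$ and $c(nt+i)=B$ with $G,B\notin P_0$, then the triple $(mt-nt,\,nt+i,\,mt+i)$ has its first coordinate in $R_0$, so it avoids $G,B$ and is rainbow. I want to adapt this, but now the equation is $x_1+x_2=px_3$ and the "base" palette is $P_j$ rather than $P_0$.

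**The construction.**

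First I would fix $G,B\in P_i\setminus P_j$ with representatives $c(at+i)=G$ and $c(bt+i)=B$ for integers $a,b$. The goal is to produce a triple $(x_1,x_2,x_3)$ in $\mathbb{Z}_{qt}$ where two of the coordinates carry colors $G$ and $B$ and the third lies in residue class $R_j$ (so its color is in $P_j$, hence neither $G$ nor $B$). I want $x_3$ (or whichever coordinate) to satisfy $x_3\equiv j\pmod t$. Because the equation is $x_1+x_2\equiv p x_3\pmod{qt}$, reducing mod $t$ gives $x_1+x_2\equiv p x_3\pmod t$. So if I place the two colored elements at positions $\equiv i\pmod t$, I need the third coordinate to satisfy the residue equation. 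The key freedom is that $q$ is invertible: I can add multiples of $t$ (i.e.\ move within residue classes) freely because there are $q$ elements in each $R_\ell$, and I should exploit that the map adjusting the "$q$-part" of a coordinate ranges over all of $\mathbb{Z}_{qt}$ within a fixed residue class. Concretely, I would try to solve for a third coordinate of the form $x_3\equiv j\pmod t$: the constraint mod $t$ reads $i+i\equiv pj$ or $i-i\equiv \dots$ depending on which two coordinates I assign to $G,B$, so I may need to assign $G$ and $B$ to $x_1,x_3$ (say) rather than $x_1,x_2$, and let the free coordinate land in $R_j$. I expect the cleanest choice is to put the two $\{G,B\}$-colored elements in the two coordinates whose residues I can match against $R_i$, and solve the single linear congruence for the remaining coordinate to land in $R_j$; since $q\nmid$ the relevant multiplier, the within-class ($\bmod\ q$) degree of freedom lets me realize any needed value.

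**The main obstacle.**

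The hard part will be the case analysis forced by the asymmetric roles of the three coordinates under $x_1+x_2=px_3$: unlike the Schur equation, swapping a coordinate into the $x_3$ slot introduces the factor $p$, and I must check that the residue equation mod $t$ can actually be satisfied with one coordinate in $R_j$ while the other two realize $G$ and $B$ in $R_i$. I would handle this by checking the residue condition mod $t$ first (a single linear congruence in the unknown residue, solvable since I only need existence of a residue, and $R_j$ is a genuine class), then lifting to $\mathbb{Z}_{qt}$ using the $q$-fold freedom within each class to hit the prescribed colors. I also need to confirm the three coordinates are genuinely a valid triple and that the $R_j$-coordinate's color, being in $P_j$, differs from both $G$ and $B$ (immediate since $G,B\notin P_j$). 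If the naive assignment of $G,B$ to $(x_1,x_2)$ fails the mod-$t$ residue test, I would reassign them to $(x_1,x_3)$ and repeat; one of the placements will force the free coordinate into $R_j$, completing the contradiction and hence the proof.
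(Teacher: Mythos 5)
Your plan has a genuine gap at its central step. You want to place the two elements colored $G$ and $B$ in two coordinates of a triple and then ``solve a linear congruence'' so that the remaining coordinate lands in $R_j$. But once two coordinates are pinned to residue class $R_i$, the residue class of the third is completely determined mod $t$ by $x_1+x_2\equiv px_3\pmod t$: it is $R_{(p-1)i}$ if the $G,B$ elements occupy an $x_\ell$ slot and the $x_3$ slot, or $R_{2ip^{-1}}$ (when this is even defined) if they occupy $x_1,x_2$. The ``$q$-fold freedom'' of adding multiples of $t$ only moves an element \emph{within} its residue class; it never changes which class the forced coordinate lies in. So neither placement can be made to land in $R_j$ for a general $j$, and your fallback ``one of the placements will force the free coordinate into $R_j$'' is simply false. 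A telltale sign is that your argument never uses the hypothesis that $|P_j|$ is maximal, which the statement requires and which the paper's proof needs essentially.

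The paper's actual argument is structurally different and worth internalizing. Taking $G,B\in P_i\setminus P_j$ at positions $tu+i$ and $tv+i$, it works with the class $R_{pi-j}$ (the class the forced third coordinate actually lands in when one coordinate is in $R_j$ and $x_3\in R_i$). It first shows $P_{pi-j}=P_j$ by exhibiting a rainbow triple whenever the two palettes differ. Then, since every triple $(ts+j,\,ptu-ts+pi-j,\,tu+i)$ has its third coordinate colored $G\notin P_j=P_{pi-j}$, rainbow-freeness forces $c(ts+j)=c(ptu-ts+pi-j)$, and chaining this through $tv+i$ yields $c(ts+j)=c(ts+j+p(v-u)t)$ for all $s$. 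Because $0<v-u<q$ and $p\neq q$, the shift $p(v-u)$ generates $\mathbb Z_q$, so $R_j$ is monochromatic --- contradicting $|P_j|\geq|P_i|\geq 2$. You would need to replace your construction with an argument of this kind (or otherwise exploit maximality of $|P_j|$); as written, the proposal does not go through.
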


\begin{proof}
For the sake of contradiction, assume that there exists $i$ such that $|P_i\setminus P_j|\geq 2$. This implies that there exists $tu+i$ and $tv+i$ with colors G and B respectively, that are not in $P_j$. Without loss of generality, $v>u$

First suppose that $P_{pi-j}\neq P_j$. There are two cases: either $P_{pi-j}$ has a color that is not in $P_j$, or $P_j$ has a color that is not in $P_{pi-j}$. 

\textbf{Case 1:} Suppose that $c(st+pi-j)\notin P_j$. Without loss of generality, $c(st+pi-j)\neq G$. Then 
\begin{align*}
x_1&=ts+pi-j\\
x_2&=ptu+-ts+j\\
x_3&=tu+i
\end{align*}
is a rainbow triple.

\textbf{Case 2:} Suppose that $c(ts+j)\notin P_{pi-j}$. Then 
\begin{align*}
x_1&=ts+j\\
x_2&=ptu-ts+pi-j\\
x_3&=tu+i
\end{align*}
is rainbow. 

Since $c$ is assumed to be rainbow-free, both cases result in a contradiction. Therefore, $P_j=P_{pi-j}$.

Let $ts+j\in R_j$. Since $c$ is rainbow-free, $c(ptu-ts+pi-j)=c(ts+j)$. Similarly, the triple 
\[
\{t(pu-s) + pi-j, t(pv-pu+s)+j, tv+i\}
\]
shows that $c(ptv-ptu+ts+j)=c(ptu-ts+pi-j)=c(ts+j)$. Notice that the difference of position between $ptv-ptu+ts+j$ and $ts+j$ in $R_j$ is $p(v-u)$. Since $p\neq q$ is prime and $v-u<q$, we know that $p(v-u)$ generates $\mathbb Z_q$. Therefore, $R_j$ is monochromatic; this contradicts the maximality of $|P_j|$.
\end{proof}

Lemma \ref{nottoobig} allows us to create a well-defined reduction of a coloring of $\mathbb Z_{qt}$ to a coloring of $\mathbb Z_t$.

\begin{lemma}\label{GenKProjection}
Let $t$ be a positive integer and $q \not = p$ be prime. 
Let $R_0, R_1, \cdots, R_{t-1}$ be the residue classes modulo $t$ for $\mathbb{Z}_{qt}$ with corresponding residue palettes $\{P_i\}$.
Let $j$ be an index such that $|P_j| \geq |P_i|$ for all $0 \leq i < t$.
Suppose $c$ is a coloring of $\mathbb{Z}_{qt}$ where $|P _i\setminus P_j| \leq 1$.
Let $\hat{c}$ be a coloring of $\mathbb{Z}_t$ such that: 
\[ \hat{c}(i):=
\begin{cases} 
   P_i\setminus P_j & \textrm{if } |P_i\setminus P_j| = 1\\
   \alpha & \textrm{otherwise}
\end{cases}
\]
If $\hat{c}$ contains a rainbow triple then $c$ contains a rainbow triple. 
\end{lemma}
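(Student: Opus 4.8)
The plan is to mirror the lift used in the proof of Lemma \ref{stProjection}, adjusting for the multiplier $p$ on $x_3$ and for the fact that the reference palette is now the largest palette $P_j$ (as in Lemma \ref{nottoobig}) rather than $P_0$. As in that lemma, $\alpha$ denotes a fresh color appearing in no residue palette, so a rainbow triple of $\hat c$ uses $\alpha$ on at most one of its three terms; hence at least two of $\hat c(x_1),\hat c(x_2),\hat c(x_3)$ are genuine ``extra'' colors, each lying in $P_{x_i}\setminus P_j$ and therefore outside $P_j$. Suppose then that $(x_1,x_2,x_3)$ is a rainbow triple in $\hat c$, so $x_1+x_2\equiv px_3\pmod t$ with the three values $\hat c(x_i)$ pairwise distinct. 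Writing each residue class as $R_i=\{mt+i:0\le m<q\}$, I will choose representatives $a_i\in R_{x_i}$ that form a genuine solution of $x_1+x_2=px_3$ in $\mathbb Z_{qt}$ with pairwise distinct colors. The governing computation is that for $a_i=m_it+x_i$ one has $a_1+a_2-pa_3=(m_1+m_2-pm_3+s)t$, where $s$ is the integer with $x_1+x_2=px_3+st$; thus $(a_1,a_2,a_3)$ is a triple in $\mathbb Z_{qt}$ exactly when $pm_3\equiv m_1+m_2+s\pmod q$.

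First I would handle the case in which neither $x_1$ nor $x_2$ receives color $\alpha$. Let $G_1=\hat c(x_1)$ and $G_2=\hat c(x_2)$, both extra colors, and pick any $a_1\in R_{x_1}$, $a_2\in R_{x_2}$ with $c(a_1)=G_1$, $c(a_2)=G_2$; these exist since $G_\ell\in P_{x_\ell}$. Because $p$ and $q$ are distinct primes, $p$ is invertible modulo $q$, so the congruence $pm_3\equiv m_1+m_2+s\pmod q$ has a unique solution $m_3\in\mathbb Z_q$, giving $a_3=m_3t+x_3\in R_{x_3}$ with $a_1+a_2=pa_3$. Now $c(a_3)\in P_{x_3}$, and $G_1,G_2\notin P_{x_3}$: they lie outside $P_j$, while $P_{x_3}\setminus P_j\subseteq\{\hat c(x_3)\}$ with $\hat c(x_3)$ distinct from both $G_1$ and $G_2$ by rainbowness. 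Hence $c(a_3)\notin\{G_1,G_2\}$ and $(a_1,a_2,a_3)$ is a rainbow triple in $c$. Note that this step subsumes the subcase $\hat c(x_3)=\alpha$, since then $P_{x_3}\subseteq P_j$ directly excludes $G_1,G_2$.

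The remaining case is that one of $x_1,x_2$ is colored $\alpha$; by the symmetry of $x_1$ and $x_2$ assume $\hat c(x_1)=\alpha$, so $G_2=\hat c(x_2)$ and $G_3=\hat c(x_3)$ are distinct extra colors. Here I pick $a_2\in R_{x_2}$, $a_3\in R_{x_3}$ with $c(a_2)=G_2$, $c(a_3)=G_3$, and simply set $a_1=pa_3-a_2$. Since $px_3-x_2\equiv x_1\pmod t$, the element $a_1$ lies in $R_{x_1}$, and $(a_1,a_2,a_3)$ is a triple by construction. Because $\hat c(x_1)=\alpha$ we have $P_{x_1}\subseteq P_j$, so $c(a_1)\in P_j$ while $G_2,G_3\notin P_j$; thus $c(a_1)$ differs from both $G_2$ and $G_3$, and the triple is rainbow.

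The main obstacle is the multiplier $p$: unlike the Schur case one cannot simply ``add'' the two fixed representatives to obtain the third, and $p$ may fail to be invertible modulo $qt$ when $p\mid t$. The resolution is that solving for the third coordinate only requires inverting $p$ within a single residue class, i.e.\ modulo $q$, where invertibility is guaranteed by $p\neq q$. Everything else is routine bookkeeping of which residue class each constructed element falls into, exactly parallel to Lemma \ref{stProjection}.
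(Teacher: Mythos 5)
Your proof is correct and follows essentially the same route as the paper's: both lift a rainbow triple of $\hat c$ by fixing representatives of the two non-$\alpha$-colored terms and producing the third, using that $p$ is invertible modulo $q$ to solve for the remaining representative within its residue class, and both rule out a color clash via $P_{x_i}\setminus P_j\subseteq\{\hat c(x_i)\}$. The only cosmetic difference is the case split (you split on whether one of $x_1,x_2$ gets $\alpha$; the paper splits on whether $x_3$ does), which changes nothing substantive.
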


\begin{proof}
Suppose that $(x_1,x_2,x_3)$ is a rainbow triple in $\mathbb{Z}_t$ under $\hat c$. There are two cases:$\hat c(x_3)=\alpha$, or $\hat c(x_3)\neq \alpha$.

\textbf{Case 1:} If $\hat c(x_3)=\alpha$, then $\alpha\neq \hat c(x_1),\hat c(x_2)$. Without loss of generality, suppose that $x_1$ and $x_2$ are colored $G$ and $B$, respectively. This implies that there exists $u,v$ such that $c(tu+x_1)=G$ and $c(tv+x_2)=B$. We must find integer $s$ such that $$u+v-ps\equiv \begin{cases} 1 \mod q& x_1+x_2\geq t\\
0\mod q & x_1+x_2<t\end{cases}.$$ Since $p$ and $q$ are relatively prime, we can alway solve for $s$. Therefore, there exists a rainbow triple in $\mathbb Z_{qt}$ under $c$.

\textbf{Case 2:} Assume $\hat c(x_3)\neq\alpha$. Without loss of generality, $\hat c(x_1)\neq \alpha$, and there exists $u,v$ such that $c(tu+x_1)=G$ and $c(tv+x_3)=B$ where $G,B\notin P_{x_2}$. Notice that $ptv-tu+px_3-x_1\in R_{x_2}$. Therefore, there exist a rainbow triple in $\mathbb Z_{qt}$ under $c$.
\end{proof}

\begin{prop} \label{GenKUpperBound}
Let $t$ be a positive integer, and let $q$ and $p$ be distinct primes.
Then $$rb(\mathbb{Z}_{qt},p) \leq rb(\mathbb{Z}_t,p) + rb(\mathbb{Z}_q,p) - 2.$$
\end{prop}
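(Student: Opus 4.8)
The plan is to mirror exactly the argument used for the Schur case in Proposition \ref{UpperBound}, now leveraging the residue-palette machinery developed in Lemmas \ref{nottoobig} and \ref{GenKProjection}. I would begin by letting $c$ be a maximum rainbow-free $r$-coloring of $\mathbb{Z}_{qt}$, so that $r = rb(\mathbb{Z}_{qt},p)-1$, and fix the residue classes $R_0,\dots,R_{t-1}$ modulo $t$ with palettes $P_0,\dots,P_{t-1}$. Let $j$ be an index maximizing $|P_j|$. Since $c$ is rainbow-free, Lemma \ref{nottoobig} guarantees $|P_i\setminus P_j|\leq 1$ for every $i$, which is precisely the hypothesis needed to form the reduced coloring $\hat c$ of $\mathbb{Z}_t$ from Lemma \ref{GenKProjection}.

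Next I would count the total number of colors in $c$. Every color of $c$ either appears in $P_j$ or appears in some $P_i\setminus P_j$; by the palette bound, each residue class $R_i$ contributes at most one color not already in $P_j$, and that extra color is exactly what $\hat c$ records as $\hat c(i)$. Thus the colors of $c$ are precisely the colors of $P_j$ together with the non-$\alpha$ colors used by $\hat c$, giving $r = |P_j| + |\hat c| - 1$, where $|\hat c|$ denotes the number of colors appearing in $\hat c$ (the $-1$ accounts for $\alpha$, the placeholder color that is not a genuine color of $c$). This is the step that must be set up carefully so that no color is double-counted and the placeholder $\alpha$ is correctly excluded.

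I would then bound the two terms separately. Because $c$ is rainbow-free on $\mathbb{Z}_{qt}$, its restriction to the residue class $R_j$ — which is a copy of $\mathbb{Z}_q$ via the identification respecting the equation $x_1+x_2=px_3$ — must itself be rainbow-free, so $|P_j|\leq rb(\mathbb{Z}_q,p)-1$. On the other side, Lemma \ref{GenKProjection} tells us that if $\hat c$ had a rainbow triple then so would $c$; since $c$ is rainbow-free, $\hat c$ must be rainbow-free on $\mathbb{Z}_t$, giving $|\hat c|\leq rb(\mathbb{Z}_t,p)-1$. Combining, $r \leq (rb(\mathbb{Z}_q,p)-1)+(rb(\mathbb{Z}_t,p)-1)-1 = rb(\mathbb{Z}_q,p)+rb(\mathbb{Z}_t,p)-3$, and substituting $r = rb(\mathbb{Z}_{qt},p)-1$ yields the claimed inequality $rb(\mathbb{Z}_{qt},p)\leq rb(\mathbb{Z}_t,p)+rb(\mathbb{Z}_q,p)-2$.

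The main obstacle I anticipate is justifying the claim $|P_j|\leq rb(\mathbb{Z}_q,p)-1$, i.e.\ that the induced coloring on the single residue class $R_j$ is genuinely a rainbow-free coloring of $\mathbb{Z}_q$ for the equation $x_1+x_2=px_3$. Because $q\neq p$, the multiplier $p$ is a unit modulo $q$, and one must verify that a triple internal to $R_j$ projects (after dividing by the appropriate factor, as in Lemma \ref{SweetDivisonTrick}) to a bona fide triple in $\mathbb{Z}_q$, so that a rainbow triple inside $R_j$ would lift to a rainbow triple in $\mathbb{Z}_{qt}$. Once that correspondence is pinned down, the rest of the argument is the same bookkeeping as in Proposition \ref{UpperBound}.
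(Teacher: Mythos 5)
Your proposal is essentially identical to the paper's own proof: choose $j$ maximizing $|P_j|$, invoke Lemma \ref{nottoobig} to build $\hat c$ via Lemma \ref{GenKProjection}, count colors as $r=|P_j|+|\hat c|-1$, and bound the two terms by $rb(\mathbb{Z}_q,p)-1$ and $rb(\mathbb{Z}_t,p)-1$ respectively. The one step you flag as delicate --- that $c$ restricted to $R_j$ genuinely induces a rainbow-free coloring of $\mathbb{Z}_q$ for $x_1+x_2=px_3$ even when $j\neq 0$ --- is exactly the step the paper also asserts without further justification, so your instinct about where the care is needed is sound.
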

\begin{proof}
Let $c$ be a rainbow-free $r$-coloring of $\mathbb{Z}_{qt}$, and let $\hat{c}$ be a coloring constructed from $c$ as described in Lemma \ref{GenKProjection}.
Notice that the set of colors used in $c$ is comprised of the colors in $R_j$ and each color used in $\hat{c}$ other than $\alpha$. Thus, we know that $r=|P_j|+|\hat{c}|-1$, where $|\hat{c}|$ is the number of colors appearing in $\hat{c}$. 

Since $c$ is a rainbow-free coloring of $\mathbb{Z}_{qt}$, then $c|_{R_j}$ must be a rainbow-free coloring of $\mathbb{Z}_q$, so $|P_j| \leq rb(\mathbb{Z}_q,p)-1$.
Furthermore, $\hat{c}$ is a rainbow-free coloring of $\mathbb{Z}_t$, implying that $|\hat{c}| \leq rb(\mathbb{Z}_t,p)-1$.
Therefore, $r \leq rb(\mathbb{Z}_t,p)+rb(\mathbb{Z}_q,p)-3$.
If we let $c$ be the maximum rainbow-free coloring of $\mathbb{Z}_{qt}$, then $r=rb(\mathbb{Z}_{qt},p)-1$.
This shows that $rb(\mathbb{Z}_{qt},p) \leq rb(\mathbb{Z}_t,p) + rb(\mathbb{Z}_q,p)-2$.
\end{proof}

We can use Proposition \ref{GenKUpperBound} to find a matching upper bound for Proposition \ref{GenKLowerBound}.

\begin{proof} [Proof of Theorem \ref{GenKFormula}]
Recursively applying Proposition \ref{GenKUpperBound} for every prime factor $p_i \not = p$ of $n$  gives
$$rb(\mathbb{Z}_n, p) \leq rb(\mathbb{Z}_{p^{\alpha}}, p) + \sum_{i=1}^m \Big( \alpha_i(rb(\mathbb{Z}_{q_i}, p)-2) \Big).$$
Since this is identical to the lower bound from Proposition \ref{GenKLowerBound}, we can conclude
$$rb(\mathbb{Z}_n, p) = rb(\mathbb{Z}_{p^{\alpha}}, p) + \sum_{i=1}^m \Big( \alpha_i(rb(\mathbb{Z}_{q_i}, p)-2) \Big).$$
\end{proof}

\section*{Acknowledgements}

This research took place primarily at SUAMI at Carnegie Mellon University and the authors would like to thank the NSA for funding the program.

%\section{General $k$}\label{SectionGenK}
%\input{GeneralK/GenIntro}

%\subsection{$aw(\mathbb{Z}_p, k)$} \label{SectionGenKPrimes}
%\input{GeneralK/GenPrimes}
%\section{Future Research}\label{FutureResearch}
%\input{FutureResearch}

\end{document}